\def\oversortoftilde#1{\mathop{\vbox{\m@th\ialign{##\crcr\noalign{\kern3\p@}%
      \sortoftildefill\crcr\noalign{\kern3\p@\nointerlineskip}%
      $\hfil\displaystyle{#1}\hfil$\crcr}}}\limits}
\def\sortoftildefill{$\m@th \setbox\z@\hbox{$\braceld$}%
  \braceld\leaders\vrule \@height\ht\z@ \@depth\z@\hfill\braceru$}
\newtheorem{introques}{Question}
\newtheorem{introdefn}[introques]{Definition}
\newtheorem{introdethm}[introques]{Theorem}
\newtheorem{introgoal}[introques]{Goal}
\newtheorem{thm}{Theorem}[subsection]
\newtheorem{defn}[thm]{Definition}
\newtheorem{lemma}[thm]{Lemma} 
\newtheorem{proposition}[thm]{Proposition} 
\newtheorem{remark}[thm]{Remark} 
\newtheorem{Cor}[thm]{Corollary}
\newtheorem{conj}[thm]{Conjecture}
\numberwithin{thm}{subsection}
\DeclareMathOperator{\id}{id}
\DeclareMathOperator{\Gr}{Gr}
\DeclareMathOperator{\hdet}{hdet} 
\DeclareMathOperator{\Ext}{Ext}
\DeclareMathOperator{\Hom}{Hom}
\DeclareMathOperator{\HOM}{HOM}
\DeclareMathOperator{\comod}{comod}
\DeclareMathOperator{\Tor}{Tor}
\DeclareMathOperator{\co}{co}
\DeclareMathOperator{\op}{op}
\newcommand{\uaut}{\underline{\rm aut}}
\newcommand{\era}{\underline{\rm end}^r(A)}
\newcommand{\ara}{\underline{\rm aut}^r(A)}
\newcommand{\kk}{\Bbbk}
\newcommand{\uend}{\underline{\rm end}}
\def\it{\textit}
\numberwithin{equation}{section}
\title{Twisting Manin's universal quantum groups \mbox{and comodule algebras}}
\author[Huang]{Hongdi Huang}
\address{(Huang) Department of Mathematics, Shanghai University, Shanghai 200444, China. and \newline \indent Department of Mathematics, Rice University, Houston, TX 77005, U.S.A.}
\email{h237huan@rice.edu}
\author[Nguyen]{Van C. Nguyen}
\address{(Nguyen) Department of Mathematics, United States Naval Academy, Annapolis, MD 21402, U.S.A.}
\email{vnguyen@usna.edu}
\author[Ure]{Charlotte Ure}
\address{(Ure) Department of Mathematics, Illinois State University, Normal, IL 61790, U.S.A.}
\email{cure@ilstu.edu}
\author[Vashaw]{Kent B. Vashaw}
\address{(Vashaw) Department of Mathematics,
Massachusetts Institute of Technology,
Cambridge, MA 02139, U.S.A.}
\email{kentv@mit.edu}
\author[Veerapen]{Padmini Veerapen}
\address{(Veerapen) Department of Mathematics, Tennessee Tech University, Cookeville, TN 38505, U.S.A.}
\email{pveerapen@tntech.edu}
\author[Wang]{Xingting Wang}
\address{(Wang) Department of Mathematics, Louisiana State University, Baton Rouge, Louisiana 70803, USA}
\email{xingtingwang@math.lsu.edu}
\date\today
\subjclass{
16T05, 
16W50, 
17B37 
}
\keywords{universal quantum group, Morita--Takeuchi equivalence, 2-cocycle twist, Artin--Schelter regular algebra, superpotential algebra}
\begin{document}

\maketitle

\begin{abstract} 
We introduce the notion of quantum-symmetric equivalence of two connected graded algebras, based on Morita--Takeuchi equivalences of their universal quantum groups, in the sense of Manin. We study homological and algebraic invariants of quantum-symmetric equivalence classes, and prove that numerical $\mathrm{Tor}$-regularity, Castelnuovo--Mumford regularity, Artin--Schelter regularity, and the Frobenius property are invariant under any Morita--Takeuchi equivalence. In particular, by combining our results with the work of Raedschelders and Van den Bergh, we prove that Koszul Artin--Schelter regular algebras of a fixed global dimension form a single quantum-symmetric equivalence class. Moreover, we characterize 2-cocycle twists (which arise as a special case of quantum-symmetric equivalence) of Koszul duals, of superpotentials, of superpotential algebras, of Nakayama automorphisms of twisted Frobenius algebras, and of Artin--Schelter regular algebras. We also show that finite generation of Hochschild cohomology rings is preserved under certain 2-cocycle twists.
\end{abstract}

\section{Introduction}

Symmetry is an important concept that appears in mathematics and theoretical physics. While classical symmetries arise from group actions on polynomial rings, quantum symmetries are introduced to understand certain quantum objects which appear in the theory of quantum mechanics and quantum field theory. Examples of such quantum objects include Hopf algebras, subfactors and topological phases of matter, whose symmetries are described by tensor categories, resembling categories of group representations. 

In this paper, we focus on algebraic quantum symmetries of noncommutative projective spaces. In noncommutative projective algebraic geometry, noncommutative projective spaces are described by their noncommutative homogeneous coordinate rings, among which is the family of Artin--Schelter (AS) regular algebras. In recent years, finite-dimensional quantum symmetries have been extensively studied in connection with AS-regular algebras, for example, in the context of noncommutative invariant theory, rigidity, and the McKay correspondence (see e.g.,~\cite{CKWZ, ChanKirkmanWaltonZhang2019, CS1,CS2,CuadraEtingofWalton2016, EtingofWalton2016, EtingofWalton2017, M2005}). 

We propose a systematic study of infinite-dimensional quantum symmetries for noncommutative projective spaces. By following Manin's method in \cite{Manin2018}, we describe these quantum symmetries in terms of the universal coaction of a particular infinite-dimensional Hopf algebra, called Manin's universal quantum group (see \Cref{def:ManinU}), on the corresponding noncommutative homogeneous coordinate rings while preserving the grading. This motivates the following categorical notion that describes when two noncommutative projective spaces possess the same quantum symmetries.  

\begin{introdefn}
\label{DefnA}
Let $A$ and $B$ be two connected graded algebras finitely generated in degree one. We say $A$ and $B$ are \emph{weakly quantum-symmetrically equivalent} if there is a monoidal equivalence between the comodule categories of their associated universal quantum groups 
\[
\comod(\underline{\rm aut}(A))~\overset{\otimes}{\cong}~{\rm comod}(\underline{\rm aut}(B))
\]
in the sense of Manin. If this equivalence additionally sends  $A$ to $B$ as comodule algebras, we say that $A$ and $B$ are \emph{quantum-symmetrically equivalent}.
\end{introdefn}
When $A$ is nonconnected graded, interested readers can refer to \cite{HWWW} for a description of universal semigroupoids coacting on $A$. When $A$ and $B$ are two Koszul AS-regular algebras of the same dimension, by a celebrated result of Raedschelders and Van den Bergh \cite{vdb2017}, $A$ and $B$ are always weakly quantum-symmetrically equivalent. Thus, Koszul AS-regular algebras can be viewed as deformations of the projective spaces $\mathbb P^{n}$ that preserve their quantum symmetries. This motivates the following main goal.

\begin{introgoal}
\label{QuestionB}
Describe the (weak) quantum-symmetric equivalence class of a connected graded algebra $A$ finitely generated in degree one.
\end{introgoal}

In our first result, we give a way of deforming an algebra $A$ which preserves its quantum-symmetric equivalence class, by utilizing 2-cocycle twists of Hopf algebras and their comodule categories. The notion of a 2-cocycle twist was introduced by Doi and Takeuchi \cite{Doi93,DT94} as a dual version of the Drinfeld twist. A 2-cocycle twist of a Hopf algebra $H$ deforms its algebra structure by a $2$-cocycle $\sigma$ on $H$ but preserves its coalgebra structure. It is well-known that $H$ and its 2-cocycle twist $H^\sigma$ are Morita--Takeuchi equivalent, that is, their corresponding comodule categories are monoidally equivalent. By applying this monoidal equivalence of comodule categories, we establish the following result. 

\begin{introdethm}[\Cref{thm:action-Manin}]\label{Intro:Zhang2}
Let $A$ be any $\mathbb Z$-graded locally finite algebra and $H$ be any Hopf algebra that right coacts on $A$ preserving its grading. Then for any right 2-cocycle $\mu$ on $ H$, there is a right 2-cocycle $\sigma$ on Manin's universal quantum group $\underline{\rm aut}^r(A)$ associated to $A$ such that there is an isomorphism between 2-cocycle twists 
\[A_{\sigma}~\cong~A_{\mu}\]
as graded algebras, and 
\[\underline{\rm aut}^r(A_\mu)~\cong~ \underline{\rm  aut}^r(A)^\sigma\]
as Hopf algebras. As a consequence, $A_\mu$ and $A$ are quantum-symmetrically equivalent. 
\end{introdethm}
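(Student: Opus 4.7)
The plan is to exploit the universal property of Manin's quantum group $\underline{\rm aut}^r(A)$, namely that it is the universal Hopf algebra coacting on $A$ while preserving the grading. Since $H$ is such a coacting Hopf algebra, there is a unique graded Hopf algebra map $\pi\colon \underline{\rm aut}^r(A)\to H$ factoring the $H$-coaction $\rho_H$ through the universal coaction $\rho\colon A\to A\otimes \underline{\rm aut}^r(A)$, i.e.\ $\rho_H = (\id_A\otimes \pi)\circ \rho$. Pulling $\mu$ back along $\pi$ defines
\[
\sigma \;:=\; \mu\circ (\pi\otimes \pi)\colon \underline{\rm aut}^r(A)\otimes \underline{\rm aut}^r(A) \longrightarrow \kk,
\]
and since $\pi$ respects comultiplication and antipode, the right 2-cocycle identity and convolution-invertibility for $\mu$ transfer directly to $\sigma$.

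The identity map on $A$ realizes $A_\sigma \cong A_\mu$ as graded algebras: writing $\rho(a) = a_{(0)}\otimes a_{(1)}$, the twisted product is
\[
a\cdot_\sigma b \;=\; a_{(0)}b_{(0)}\,\sigma(a_{(1)}\otimes b_{(1)}) \;=\; a_{(0)}b_{(0)}\,\mu(\pi(a_{(1)})\otimes \pi(b_{(1)})) \;=\; a\cdot_\mu b.
\]
For the Hopf algebra isomorphism $\underline{\rm aut}^r(A_\mu)\cong \underline{\rm aut}^r(A)^\sigma$, I would verify that the target satisfies the universal property of the source. By the standard fact that 2-cocycle twists of comodule algebras are again comodule algebras over the twisted Hopf algebra, $\rho$ becomes a graded right coaction $A_\sigma \to A_\sigma\otimes \underline{\rm aut}^r(A)^\sigma$. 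Universality of $\underline{\rm aut}^r(A_\sigma) = \underline{\rm aut}^r(A_\mu)$ then yields a graded Hopf algebra map $\Phi\colon \underline{\rm aut}^r(A_\mu)\to \underline{\rm aut}^r(A)^\sigma$. To produce the inverse, I would run the same argument with the convolution inverse cocycle $\sigma^{-1}$, using $(\underline{\rm aut}^r(A)^\sigma)^{\sigma^{-1}} = \underline{\rm aut}^r(A)$ and $(A_\mu)_{\sigma^{-1}} = A$: universality of $\underline{\rm aut}^r(A)$ produces $\Psi\colon \underline{\rm aut}^r(A)^\sigma\to \underline{\rm aut}^r(A_\mu)$. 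Both $\Phi\circ\Psi$ and $\Psi\circ\Phi$ send the corresponding universal coaction to itself, and so must equal the identity by the uniqueness clause of the universal property.

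The main obstacle I anticipate is the bookkeeping around compatible twists on the algebra and Hopf algebra sides, and making sure that $\Psi$ is genuinely well-defined as a Hopf algebra map without any circular appeal to the universal property of $\underline{\rm aut}^r(A)^\sigma$ itself; this reduces to the standard calculation that the $\sigma$- and $\sigma^{-1}$-twist operations on graded comodule-algebra structures are mutually inverse. Once the two isomorphisms are established, the quantum-symmetric equivalence of $A$ and $A_\mu$ is automatic: by the Doi--Takeuchi theory, the twist $\underline{\rm aut}^r(A)\leadsto \underline{\rm aut}^r(A)^\sigma$ induces a monoidal (Morita--Takeuchi) equivalence of comodule categories sending each comodule algebra to its $\sigma$-twist; applied to $A$ and composed with the isomorphism $\underline{\rm aut}^r(A)^\sigma \cong \underline{\rm aut}^r(A_\mu)$, this yields the desired equivalence $\comod(\underline{\rm aut}^r(A))\simeq \comod(\underline{\rm aut}^r(A_\mu))$ matching the comodule algebra $A$ to $A_\sigma\cong A_\mu$, which is precisely quantum-symmetric equivalence in the sense of Definition A.
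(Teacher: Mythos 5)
Your proposal is correct and follows essentially the same route as the paper: pull the cocycle $\mu$ back along the universal Hopf algebra map $\underline{\rm aut}^r(A)\to H$, check the twisted products agree via the factorization of the coaction, and establish $\underline{\rm aut}^r(A_\mu)\cong\underline{\rm aut}^r(A)^\sigma$ by a universal-property argument. The only difference is cosmetic: the paper defers that last isomorphism to an argument in a cited reference, whereas you sketch the standard twist/untwist round trip explicitly.
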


Another approach of deforming a graded algebra $A$ was observed by Artin, Tate, and Van den Bergh in \cite{ATV1991} and Zhang in \cite{Zhang1996} as a twist of the original graded product of $A$ by a graded automorphism $\phi$ of $A$, or more generally by a twisting system. Two graded algebras have equivalent graded module categories if and only if they are twists of each other in the sense of Zhang \cite[Theorem 3.5]{Zhang1996}. We will refer to this deformation by the term Zhang twist and denote it by $A^\phi$. In \cite{HNUVVW21}, the notion of twisting pairs was introduced to connect Zhang twists and 2-cocycle twists for a graded Hopf algebra satisfying some twisting conditions. An alternate approach to the connection between Zhang twists and 2-cocycle twists was also given by Bichon, Neshveyev, and Yamashita in \cite[Theorem 2.8 and Remark 2.9]{Bichon-Neshveyev-Yamashita2016} and \cite[Remark 2.4]{Bichon-Neshveyev-Yamashita2018}. In \cite[Theorems E and F]{HNUVVW21}, it was shown that Zhang twists of quadratic algebras yield Morita--Takeuchi equivalent universal quantum groups, or in light of \Cref{DefnA}, Zhang twists preserve weak quantum-symmetric equivalence classes. In our next result, for a graded algebra $A$, we establish a one-to-one correspondence between graded automorphisms of $A$ and twisting pairs on $\underline{\rm aut}^r(A)$. By viewing $A$ as a comodule algebra over Manin's universal quantum group $\underline{\rm aut}^r(A)$, we show that a Zhang twist of $A$ can be realized as a 2-cocycle twist, proving that the weak quantum-symmetric equivalence of $A$ and $A^\phi$ is in fact a quantum-symmetric equivalence.

\begin{introdethm}[\Cref{lem:2cocycleGrade}]\label{Introthm:2cycleZhang}
Let $A$ be a connected graded algebra finitely generated in degree one subject to $m$-homogeneous relations. The following groups are isomorphic:
\begin{itemize}
    \item[(1)] the group of graded automorphisms of $A$;
    \item[(2)] the group of twisting pairs of $\underline{\rm aut}^r(A)$ under component-wise composition; 
    \item[(3)] the group of one-dimensional representations of $\underline{\rm aut}^r(A)$ under tensor product of $\underline{\rm aut}^r(A)$-modules. 
\end{itemize}
Moreover, for any graded automorphism $\phi$ of $A$, let $\sigma$ be the right 2-cocycle on $\uaut^r(A)$ given by the twisting pair associated to $\phi$. Then \[A^{\phi}\cong A_{\sigma}\] as graded algebras. As a consequence, $A^\phi$ and $A$ are quantum-symmetrically equivalent.
\end{introdethm}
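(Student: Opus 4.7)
The plan is to prove the three isomorphisms in succession and then identify the two types of twist, with the quantum-symmetric equivalence following immediately from \Cref{Intro:Zhang2}. Let $V := A_1$ and write $A = T(V)/(R)$ with $R \subseteq V^{\otimes m}$; by Manin's construction, $\uaut^r(A)$ is generated by the matrix coefficients $z_{ij}$ of its universal coaction $\rho$ on $V$, subject to the relations forcing $\rho$ to descend to $A$.

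For $(1)\cong(3)$, I would send a one-dimensional representation $\chi\colon \uaut^r(A) \to \kk$ to the graded algebra map $\phi_\chi := (\id_A \otimes \chi)\circ \rho$, which is invertible via convolution with $\chi\circ S$. Conversely, a graded automorphism $\phi$ restricts to an element $\phi|_V \in \GL(V)$ that must preserve $R$, so the assignment $z_{ij}\mapsto (\phi|_V)_{ij}$ annihilates every defining relation of $\uaut^r(A)$ and extends to a character $\chi_\phi$. Standard Sweedler-notation manipulations using coassociativity of $\rho$ show that these maps are mutually inverse and that composition of graded automorphisms corresponds to convolution of characters.

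For $(2)\cong(3)$, I would invoke the correspondence essentially established in \cite{HNUVVW21}, sending a character $\chi$ to the twisting pair whose components take the form $\theta_1^\chi(h) = \sum \chi(h_{(1)})\, h_{(2)}$ and $\theta_2^\chi(h) = \sum h_{(1)}\, \chi(h_{(2)})$ (or the analogue adapted to the $m$-homogeneous setting). These are graded coalgebra automorphisms by coassociativity, and componentwise composition matches convolution, which in turn matches tensor product of one-dimensional comodules. Inverting the correspondence reduces to evaluating the twisting-pair components at appropriate degrees.

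The final and most delicate step is the identification $A^\phi \cong A_\sigma$. The twisting pair associated to $\phi$ determines a right $2$-cocycle $\sigma$ on $\uaut^r(A)$ by the construction in \cite{HNUVVW21}. I would then compute the twisted product directly: for homogeneous elements $a \in A_p$ and $b \in A_q$, the coaction $\rho(b)$ involves $q$-fold products of the $z_{ij}$'s, and pairing with $\sigma$—which is built entirely from $\chi_\phi$—collapses those products into a single application of $\phi^p$ to $b$, reproducing the Zhang-twist formula. This Sweedler-index bookkeeping, reconciling the $2$-cocycle formula with the explicit form of the Zhang multiplication, is the main technical obstacle. Once the algebra isomorphism $A^\phi \cong A_\sigma$ is established, applying \Cref{Intro:Zhang2} with $H = \uaut^r(A)$ and $\mu = \sigma$ immediately yields the asserted quantum-symmetric equivalence of $A^\phi$ and $A$.
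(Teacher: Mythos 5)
Your proposal is correct in substance and, at its core, follows the same route as the paper: the twisting-pair/2-cocycle machinery is imported from \cite{HNUVVW21}, and the decisive step is the same Sweedler computation showing that $a\cdot_\sigma b=\sum \sigma(a_1,b_1)a_0b_0$ collapses to $a\,\phi^{|a|}(b)$ via the compatibility $\rho\circ\phi=(\id\otimes\uaut^r(\phi))\circ\rho$; the deduction of quantum-symmetric equivalence from \Cref{thm:action-Manin} is exactly how the paper's introduction obtains that consequence. Where you genuinely diverge is the bijection $(1)\leftrightarrow(3)$: you build it directly by sending $\phi$ to the character $z_i^j\mapsto(\phi|_V)_{ij}$, whereas the paper first constructs the twisting pair $\bigl(\underline{\rm end}^r(\phi),\underline{\rm end}^l((\phi^{-1})^!)\bigr)$ on $\underline{\rm end}^r(A)$ via the universal property (\Cref{lem:ManinM}(4)--(5)), extends it to the Hopf envelope by \cite[Proposition 2.1.13]{HNUVVW21}, and only then passes to characters via $\varepsilon\circ\phi_1$. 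Your direct route is cleaner to state but hides the one point that actually requires care: $\chi_\phi$ is a priori defined only on the bialgebra $\underline{\rm end}^r(A)=A\bullet A^!$, and its extension to the Hopf envelope $\uaut^r(A)$ is \emph{not} supplied by the universal property of the envelope (the only bialgebra map to $\kk$ is the counit, since a coalgebra map to $\kk$ must equal $\varepsilon$). One must check that the additional generators $S^k(z_i^j)$ of the envelope can consistently be sent to entries of $(\phi|_V)^{\pm 1}$ --- equivalently, that $\phi^{-1}$ also preserves $R$ and $R^\perp$ --- or else route the extension through the twisting-pair result as the paper does; as written, ``annihilates every defining relation of $\uaut^r(A)$'' elides this. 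Two minor points: your winding-automorphism formulas for $(2)\leftrightarrow(3)$ have the roles of the two components swapped relative to condition (\textbf{P1}) of \Cref{def:conditions} (the first component must be the \emph{right} winding automorphism $h\mapsto\sum h_1\chi(h_2)$), and your correspondence $\chi\mapsto\phi_\chi$ reverses composition order, so it is an anti-isomorphism of groups unless conventions are adjusted; neither affects the validity of the argument.
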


Motivated by Manin's perspective \cite[Introduction]{Manin2018}, progress has been made recently in an attempt to answer whether the universal quantum groups that coact on a graded algebra $A$ possess the same homological and ring-theoretic properties as $A$, see e.g., \cite{Chirvasitu-Walton-Wang2019, vdb2017, WaltonWang2016}. This connection has been investigated for AS-regular algebras of dimension two in \cite{WaltonWang2016} and for arbitrary $N$-Koszul AS-regular algebras in \cite{Chirvasitu-Walton-Wang2019}. In light of \Cref{QuestionB}, we provide partial answers to the following question.

\begin{introques}
\label{homol-invt-qse}
Which ring-theoretical and homological properties of connected graded algebras are invariant under quantum-symmetric equivalence (in the sense of \Cref{DefnA})?  
\end{introques}

Suppose $A$ is a connected graded algebra, $H$ is a Hopf algebra coacting on $A$, and $F: \comod(H) \to \comod(H')$ is a Morita--Takeuchi equivalence, for another Hopf algebra $H'$. In this case, $F(A)$ is an $H'$-comodule algebra. It is natural to additionally ask the following generalization of \Cref{homol-invt-qse}: which ring-theoretical and homological properties of $A$ are shared by $F(A)$? For the sake of brevity, when we refer to a property of $A$ being invariant under Morita--Takeuchi equivalence, we mean that this property is shared by $F(A)$, for any $F, H,$ and $H'$ as above.

Indeed, as a special case of Morita--Takeuchi equivalence, 2-cocycle twists under finite-dimensional Hopf coactions have been studied earlier by various authors (see e.g., \cite{CKWZ, CS1, CS2,M2005}). In our next result, we prove that several homological properties for connected graded comodule algebras, including certain numerical regularities as discussed by Dong and Wu in \cite{DongWu2009}, J\o rgensen in \cite{Jorgensen1999,Jorgensen2004}, and Kirkman, Won, and Zhang in \cite{Kirkman-Won-Zhang2021} (see \Cref{def:Torreg} for a precise definition), are preserved under Morita--Takeuchi equivalences of the Hopf algebras that coact on them. In each case, we show that the given property is determined by a certain category of relative modules, in other words, a category of modules internal to a category of comodules (see \cite[Section 7.8]{EGNO}).

\begin{introdethm}[\Cref{thm:Torreg}]\label{Intro:numregular}
Let $H$ be any Hopf algebra with a bijective antipode and $A$ be a noetherian connected graded $H$-comodule algebra. The following numerical regularities are invariant under any Morita--Takeuchi equivalence:
 \begin{itemize}
 \item[(1)] ${\rm Torreg}(\kk)$, where $\kk$ is considered as a trivial module over $A$;
 \item[(2)] ${\rm CMreg}(A)$; and
 \item[(3)] ${\rm ASreg}(A)$.
\end{itemize}
Additionally, if $A$ is either
\begin{itemize}
 \item[(4)] $s$-Cohen--Macaulay, or 
 \item[(5)] AS-regular of dimension $d$,
 \end{itemize}
then so is its image under Morita--Takeuchi equivalence. In particular, (1)-(5) are quantum-symmetric invariants.
\end{introdethm}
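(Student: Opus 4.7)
My plan is to reduce each of (1)--(5) to a statement intrinsic to the algebra object $A$ inside $\comod(H)$, so that the conclusion will follow formally from the monoidal structure of the equivalence. If $F\colon \comod(H) \xrightarrow{\simeq} \comod(H')$ is a Morita--Takeuchi equivalence, then because $F$ is monoidal it sends the algebra object $A$ to an algebra object $B := F(A) \in \comod(H')$ and induces an equivalence of internal module categories
\[
F_A\colon {}_A\mathrm{mod}(\comod(H)) \xrightarrow{\simeq} {}_B\mathrm{mod}(\comod(H')),
\]
taking the trivial $A$-module $\kk$ to the trivial $B$-module $\kk$. Since the grading on $A$ is encoded by the direct-sum decomposition $A = \bigoplus_n A_n$ into $H$-subcomodules (the $H$-coaction preserves grading by hypothesis), $F$ transports it term-by-term, giving $B = \bigoplus_n F(A_n)$ with $\dim F(A_n) = \dim A_n$; this already transfers the connected graded and noetherian hypotheses to $B$.

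For (1), I would compute $\mathrm{Tor}^A_i(\kk,\kk)$ via a resolution $P_\bullet \to \kk$ produced by the free--forget adjunction $A \otimes (-)\colon \comod(H) \rightleftarrows {}_A\mathrm{mod}(\comod(H))$. Internal free modules $A \otimes V$ are, as ordinary graded $A$-modules, direct sums of shifts of $A$ and therefore projective, so $P_\bullet$ also computes Tor in the usual sense. Applying $F_A$ yields a resolution of $\kk$ over $B$ with isomorphic graded Tor groups, giving invariance of $\mathrm{Torreg}(\kk)$. Parts (2) and (3) follow by the same scheme: local cohomology may be written as $R\Gamma_{\mathfrak m}(-) = \varinjlim_n\,\mathrm{RHom}_A(A/\mathfrak m^n,-)$, and each truncation $A/\mathfrak m^n = A_{<n}$ is an internal $A$-module in $\comod(H)$; similarly, $\mathrm{Ext}^i_A(\kk,A)$ is computed by applying $\mathrm{Hom}_A(-, A)$ to the internal free resolution of $\kk$. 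Both $\mathrm{CMreg}(A)$ and $\mathrm{ASreg}(A)$ thus depend only on internal data and are matched under $F_A$. Parts (4) and (5) then reduce to the previous cases, since the $s$-Cohen--Macaulay property is concentration of local cohomology in a single degree, and AS-regularity of dimension $d$ is finite global dimension together with $\mathrm{Ext}^i_A(\kk,A) \cong \delta_{i,d}\,\kk(\ell)$, both of which are invariants of the internal module category already handled.

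The main obstacle is to justify that internal derived functors agree with the ordinary ones, and to ensure that the bijective-antipode hypothesis supplies the categorical features actually needed. For Tor and Ext out of $\kk$, this is handled by the internal free-module resolution described above, together with the observation that $F_A$ is an exact equivalence and hence commutes with finite homological constructions. For local cohomology and Ext into $A$, I would argue that the directed colimit defining $R\Gamma_{\mathfrak m}$ is realized internally using the truncations $A/\mathfrak m^n$, and that the finite-dimensional subcategory $\comod(H)^{\mathrm{fd}}$ is rigid thanks to the bijective antipode, which supplies the internal duals (and, after dualization, the injective envelopes) required to compute $\mathrm{Ext}_A$ and $R\Gamma_{\mathfrak m}$ inside ${}_A\mathrm{mod}(\comod(H))$. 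Once these technical points are secured, invariance of each of (1)--(5) will follow tautologically by transport along $F_A$.
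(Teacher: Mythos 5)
Your proposal follows essentially the same route as the paper: the ``internal module category'' you describe is precisely the paper's category of graded relative $(A,H)$-modules, and the paper likewise transports free resolutions coming from the free--forget adjunction, computes Tor directly, handles Ext via finite free presentations together with duality of finite-dimensional comodules (which is where the bijective antipode and the noetherian hypothesis enter), and realizes local cohomology as a directed colimit over the truncations $A/A_{\ge n}$. The one step you should not underestimate is showing that the full $A$-linear Hom space $\underline{\Hom}_A(M,N)$ --- not merely the colinear one, which computes coinvariants and hence the wrong Ext --- carries an $H$-comodule structure that is compatibly transported by the equivalence; this is the technical heart of the paper's argument (its Lemmas on relative Hom and on $\widetilde{\underline{\Hom}_A(M,N)}\cong\underline{\Hom}_{\widetilde{A}}(\widetilde{M},\widetilde{N})$, which rest on the bi-Galois/cotensor description of the Morita--Takeuchi equivalence).
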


We apply the general theory developed above to the specific case of 2-cocycle twists of comodule algebras. Provided that the underlying comodule algebra $A$ is connected graded, in view of our \Cref{Introthm:2cycleZhang}, when we realize any Zhang twist of $A$ as a 2-cocycle twist, our results in \Cref{Intro:numregular} generalize the well-known ones in the context of Zhang twist. 

For the first application, we define a 2-cocycle twist of a superpotential, and examine twists of superpotential algebras and their related universal quantum groups. We are interested in superpotential algebras due to the fact that every $N$-Koszul AS-regular algebra can be concretely realized as a superpotential algebra \cite[Theorem 4.3]{Dubois-Violette2005}; there are further results that also connect superpotential algebras to graded Calabi--Yau algebras, see e.g.,  \cite{BCY3,BSW, DVM, VdBSup}. In \cite{Mori-Smith2016}, Mori and Smith investigated twisting of superpotentials in relation to Zhang twists of their associated superpotential algebras. We extend their result to twisting by an arbitrary 2-cocycle of a Hopf algebra $H$ that coacts on the superpotential algebra while preserving its grading. 

\begin{introdethm}[\Cref{prop:2.2.5}]\label{Intro:super}
Let $1\leq N\leq m$ be integers and $H$ be a Hopf algebra over a base field $\kk$. Let $V$ be a finite-dimensional right $H$-comodule, $j:W \hookrightarrow V^{\otimes m}$ be an $H$-subcomodule and $\sigma$ be a left 2-cocycle on $H$. Then there is an isomorphism \[A(W_{\sigma},N)~\cong~A(W,N)_{\sigma}\] of $H^{\sigma}$-comodule algebras, where $A(W,N)$ is the derivation-quotient algebra given in \Cref{defn:derivation}, and $W_\sigma$ is from \Cref{def:twistform}.
\end{introdethm}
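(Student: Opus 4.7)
The plan is to leverage the monoidal equivalence $F_\sigma \colon \comod(H) \to \comod(H^\sigma)$ induced by the left 2-cocycle $\sigma$, which sends each $H$-comodule $M$ to $M_\sigma$ (underlying vector space and coaction unchanged) and whose monoidal structure is the family of natural $H^\sigma$-comodule isomorphisms
\begin{equation*}
\Phi_{M,N} \colon M_\sigma \otimes N_\sigma \xrightarrow{\ \sim\ } (M\otimes N)_\sigma
\end{equation*}
built from $\sigma$. Iterating yields isomorphisms $\Phi_n \colon V_\sigma^{\otimes n} \xrightarrow{\sim} (V^{\otimes n})_\sigma$ for each $n$, and by the construction in \Cref{def:twistform}, $W_\sigma \hookrightarrow V_\sigma^{\otimes m}$ is precisely the subcomodule obtained by pulling back $W \subset (V^{\otimes m})_\sigma$ along $\Phi_m$.

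First I would assemble the $\Phi_n$ into a graded isomorphism of $H^\sigma$-comodule algebras $T(V_\sigma) \xrightarrow{\sim} T(V)_\sigma$, where $T(V)_\sigma$ denotes the 2-cocycle twist of the tensor $H$-comodule algebra $T(V)$. Compatibility with multiplication is immediate from the definition of the twisted product $a\cdot_\sigma b = \sigma(a_{(1)},b_{(1)})\,a_{(0)}b_{(0)}$ together with the hexagon-type coherence satisfied by $\Phi$. Next I would identify the defining relations on both sides. By construction, $A(W,N)$ is the quotient of $T(V)$ by the two-sided ideal generated by the space of $(m-N)$-fold partial derivatives $\partial^{m-N}(W) \subset V^{\otimes N}$, obtained by contracting $W$ against $(V^*)^{\otimes(m-N)}$ via the evaluation pairing. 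The task is therefore to verify that the image of $\partial^{m-N}(W)_\sigma$ under $\Phi_N$ equals $\partial^{m-N}(W_\sigma)$ inside $V_\sigma^{\otimes N}$, after which the quotient algebras match and the claimed isomorphism of $H^\sigma$-comodule algebras drops out.

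The main obstacle is precisely this compatibility of the derivative operation with the monoidal equivalence $F_\sigma$. The subtlety is that the dual of $V_\sigma$ in $\comod(H^\sigma)$ is not literally $(V^*)_\sigma$ with its original coaction; rather, there is a canonical identification $(V^*)_\sigma \cong (V_\sigma)^*$ forced by rigidity of $F_\sigma$, and one must check that the twisted evaluation pairing commutes diagrammatically with $\Phi$. This reduces to invoking the 2-cocycle identity and its inverse, specifically
\begin{equation*}
\sigma(x_{(1)},y_{(1)})\,\sigma^{-1}(x_{(2)},y_{(2)}) = \varepsilon(x)\varepsilon(y),
\end{equation*}
applied iteratively along each strand of the $(m-N)$-fold contraction. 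Once this naturality of the derivative is in hand, both the tensor-algebra isomorphism of the previous step and the matching of ideals are formal, yielding the desired isomorphism $A(W_\sigma, N) \cong A(W,N)_\sigma$ as graded $H^\sigma$-comodule algebras.
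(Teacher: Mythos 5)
Your proposal is correct and follows essentially the same route as the paper: first an isomorphism $T(V_\sigma)\cong T(V)_\sigma$ of $H^\sigma$-comodule algebras (the paper's \Cref{lem:twistR}), then the compatibility $(\partial^i W)_\sigma = \partial^i(W_\sigma)$ via the canonical identification $(V_\sigma)^*\cong (V^*)_\sigma$ and a diagram chase showing the evaluation pairings commute with the monoidal structure maps (the paper's \Cref{prop:2.2.5}(1)), after which the ideals match and the isomorphism of quotients follows. You correctly isolate the one genuine subtlety — the naturality of the contraction under the monoidal equivalence — which is exactly where the paper's commutative diagram does its work.
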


We now focus on 2-cocycle twists of AS-regular algebras, when they are considered as comodule algebras over a Hopf algebra $H$. For the second application, we examine whether the AS-regular property is preserved under 2-cocycle twists. When $H$ is semisimple, this was proved by Chan, Kirkman, Walton, and Zhang \cite{CKWZ}. Recently, this was proved when $H$ is finite-dimensional by Davies \cite{Davies2017}, and independently by Chirvasitu and Smith \cite{CS2}. In the following result, we remove the assumption that $H$ is finite-dimensional.

\begin{introdethm}[\Cref{thm:AS}, \Cref{thm:TwistAS}]\label{Intro:AStwist} 
Let $A$ be an $N$-Koszul AS-regular algebra of dimension $d$.
\begin{enumerate}
    \item Suppose $H$ is an arbitrary Hopf algebra that right coacts on $A$ preserving its grading. Then for any right 2-cocycle $\sigma$ on $H$, the twisted algebra $A_\sigma$ is again $N$-Koszul AS-regular of the same dimension. 
    \item Suppose $\kk$ is algebraically closed and $N=2$. Let $B$ be any connected graded algebra generated in degree one. Then $A$ and $B$ are quantum-symmetrically equivalent if and only if $B$ is  a Koszul AS-regular algebra of the same global dimension as $A$. 
    \item With the same assumptions as in (2), $A$ and $B$ are $2$-cocycle twists of each other if and only if they have the same Hilbert series.
\end{enumerate}
\end{introdethm}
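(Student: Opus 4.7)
For part (1), I plan to combine Theorems~\ref{Intro:Zhang2}, \ref{Intro:numregular}, and \ref{Intro:super}. By Theorem~\ref{Intro:Zhang2}, any right $2$-cocycle $\sigma$ on $H$ pulls back to a right $2$-cocycle $\sigma'$ on $\uaut^r(A)$ satisfying $A_\sigma\cong A_{\sigma'}$ as graded algebras, establishing a quantum-symmetric equivalence between $A$ and $A_\sigma$. Theorem~\ref{Intro:numregular}(5) then immediately gives that $A_\sigma$ is AS-regular of dimension $d$. For the $N$-Koszul property, I would invoke Dubois--Violette's realization of every $N$-Koszul AS-regular algebra as a derivation-quotient algebra $A(W,N)$ of a superpotential $W$, combined with Theorem~\ref{Intro:super}, to obtain an isomorphism $A_\sigma\cong A(W_{\sigma'},N)$ as graded algebras. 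Since $2$-cocycle twists preserve the underlying graded vector space structure and hence the Hilbert series, the derivation-quotient description forces $A_\sigma$ to remain $N$-Koszul.

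For part (2), the forward direction is direct: Theorem~\ref{Intro:numregular}(5) transfers AS-regularity of dimension $d$ from $A$ to $B$, and the monoidal equivalence sending $A$ to $B$ preserves the degrees of generators and relations, so $B$ is quadratic; a quadratic AS-regular algebra of finite global dimension is Koszul. For the reverse direction, I would begin with Raedschelders--Van den Bergh's theorem~\cite{vdb2017}, which yields a weak quantum-symmetric equivalence $\comod(\uaut^r(A))\overset{\otimes}{\cong}\comod(\uaut^r(B))$ for any two Koszul AS-regular algebras of the same global dimension. Under this equivalence, $A$ maps to some Koszul AS-regular algebra $B'$ inside $\comod(\uaut^r(B))$ of dimension $d$. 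To promote this to a genuine quantum-symmetric equivalence between $A$ and $B$, I would apply part (3) to show that $B'$ is a $2$-cocycle twist of $B$ (after matching their Hilbert series, controlled by the structural features of the Raedschelders--Van den Bergh equivalence), and then compose via Theorem~\ref{Intro:Zhang2}.

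For part (3), the forward direction is immediate since a $2$-cocycle twist preserves the underlying graded vector space. For the converse, by part (2) there is a monoidal equivalence $F$ sending $A$ to $B$; matching Hilbert series ensures $F(A_1)\cong B_1$ as degree-one generating comodules, so the natural fiber functors of the two comodule categories are identified. A Schauenburg-type theorem then identifies such an equivalence with a $2$-cocycle twist, and Theorem~\ref{Intro:Zhang2} produces an explicit right $2$-cocycle $\sigma$ on $\uaut^r(A)$ with $B\cong A_\sigma$ as graded algebras.

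The main obstacle is the reverse direction of (2): lifting the abstract Raedschelders--Van den Bergh monoidal equivalence to one that sends $A$ to $B$ as comodule algebras. Careful use of part (3) is needed to avoid circularity, and the algebraic closure of $\kk$ enters to standardize generating comodule data via eigenvalue decompositions and to control the Hilbert series of the intermediate algebra $B'$.
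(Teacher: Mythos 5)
Your plan for part (1) has a genuine gap: \Cref{Intro:numregular} (that is, \Cref{thm:Torreg}) requires $A$ to be \emph{noetherian} and the coacting Hopf algebra to have bijective antipode, and neither hypothesis is available here --- an $N$-Koszul AS-regular algebra in this paper's sense (no GK-dimension assumption) is not known to be noetherian, and removing exactly this noetherianity assumption is the stated purpose of \Cref{thm:AS}. The paper's actual argument avoids Ext/Tor transfer for AS-regularity altogether: it uses the LPWZ criterion that a connected graded algebra is AS-regular if and only if its Ext-algebra is Frobenius, identifies $E(A)$ with an algebra built from the $N$-Koszul dual $A^!$, shows $E(A_\sigma)\cong \!_{\sigma^{-1}}E(A)$ via \Cref{lem:dualtwist}, and invokes the invariance of the Frobenius property (\Cref{cor:Frobconn}), none of which needs noetherianity. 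Your superpotential route to $N$-Koszulity is also insufficient as stated: realizing $A_\sigma$ as a derivation-quotient algebra does not by itself force $N$-Koszulity (not every derivation-quotient algebra is $N$-Koszul), and preservation of the Hilbert series does not close that gap; the paper instead gets $N$-Koszulity from \Cref{Cor:gldim}(2), i.e.\ from the invariance of $\underline{\Tor}^A_\bullet(\kk,\kk)$ being concentrated in single degrees, which requires no extra hypotheses.

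The same noetherianity problem recurs in your part (2), where you apply \Cref{Intro:numregular}(5) directly to $A$. The paper sidesteps this by using that quantum-symmetric equivalence is an equivalence relation and that $A$ is quantum-symmetrically equivalent to the polynomial ring $R=\kk[x_1,\dots,x_d]$, which \emph{is} noetherian and whose universal quantum group has bijective antipode, so \Cref{thm:Torreg}(5) is applied with $R$ in place of $A$; Koszulity of $B$ then comes from \Cref{Cor:gldim}(2), not from the claim that a quadratic AS-regular algebra of finite global dimension is Koszul (which is not a known general fact). Your reverse direction of (2) is also more complicated than necessary and risks the circularity you yourself flag: in the paper, the Raedschelders--Van den Bergh equivalence is built from Manin's rigid category $\mathcal U$ and the functors $M_A,M_B$, so it sends $M_A(r_i)\mapsto M_B(r_i)$ and hence $A=T\langle M_A(r_1)\rangle/(M_A(r_2))$ to $B$ as comodule algebras \emph{by construction}; no intermediate algebra $B'$ or appeal to (3) is needed. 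Finally, in part (3) the key missing step is that the equivalence preserves dimensions of \emph{all} finite-dimensional comodules, not just the degree-one piece: the paper deduces this from the Grothendieck ring of $\uaut^r(A)$ being free on the classes of $M_A(r_1),\dots,M_A(r_d)^{\pm 1}$ together with the equality of Hilbert series, and only then can one invoke the Bichon/Etingof--Gelaki criterion to realize the equivalence as a 2-cocycle twist.
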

As an immediate consequence, we deduce in \Cref{Cor:qs-equivalence} that the quantum-symmetric equivalence class of the polynomial algebra $\kk[x_1,\ldots,x_d]$ consists of all Koszul AS-regular algebras of dimension $d$.    

In light of this result, we can bring classical questions on AS-regular algebras (for instance, whether AS-regular algebras are noetherian, or domains \cite[Remark on p.~ 338]{ATV1991}, \cite[Question 2.1.8]{Ro2016}) into the context of 2-cocycle twists. In \cite[Theorem 7.2.3]{vdb2017}, Raedschelders and Van den Bergh proved that Koszul AS-regular algebras of the same dimension are always quantum-symmetrically equivalent. By \Cref{Intro:AStwist}, this result is strengthened to an if-and-only-if statement, and the Koszul AS-regular property can be completely characterized at the level of universal quantum groups.

For connected graded algebras, the AS-regular property is equivalent to the twisted Calabi--Yau property \cite[Lemma 1.2]{RRZ1}. To be more precise, a graded twisted Calabi--Yau algebra differs from a graded Calabi--Yau algebra by a unique graded automorphism called the \emph{Nakayama automorphism} derived from the twisted bimodule structure on its dualizing module. Many researchers have calculated the Nakayama automorphisms of various families of twisted Calabi--Yau algebras, see e.g., \cite{BZ08, goodearl2015unipotent, liu2014twisted, RRZ1, RRZ2, yekutieli2000rigid, yu2023calabi}. In general, the Nakayama
automorphism is a subtle, but important, invariant and has applications in noncommutative invariant theory and the Zariski cancellation problem for noncommutative algebras \cite{lu2017nakayama}. For the third application, we describe the Nakayama automorphisms of $N$-Koszul AS-regular algebras under arbitrary 2-cocycle twists.

\begin{introdethm}
[\Cref{Cor:Naka}]\label{Intro:Naka}
Let $A$ be an $N$-Koszul AS-regular algebra and $H$ be any Hopf algebra that right coacts on $A$. Then for any right 2-cocycle $\sigma$ on $H$, there is a formula of the Nakayama automorphism of the twisted AS-regular algebra $A_\sigma$, explicitly described in terms of the Nakayama automorphism of $A$, the $H$-coaction on $A$, and its homological codeterminant.  
\end{introdethm}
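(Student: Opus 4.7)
The plan is to combine three ingredients: the characterization of the Nakayama automorphism via the top Ext-bimodule $\Ext^d_{A^e}(A,A^e)$, the Dubois--Violette presentation of an $N$-Koszul AS-regular algebra as a superpotential (derivation-quotient) algebra, and the twist formula for superpotentials in Theorem~\ref{Intro:super}. By Theorem~\ref{Intro:AStwist}(1), the twisted algebra $A_\sigma$ is again $N$-Koszul AS-regular of the same dimension $d$, so it admits a Nakayama automorphism $\mu_{A_\sigma}$ uniquely characterized by $\Ext^d_{A_\sigma^e}(A_\sigma,A_\sigma^e)\cong {}^1(A_\sigma)^{\mu_{A_\sigma}}$ as graded $A_\sigma$-bimodules.

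First I would present $A$ as a derivation-quotient superpotential algebra $A(W,N)$, where $W\subseteq V^{\otimes m}$ is a one-dimensional $H$-subcomodule of the degree-$m$ piece (with $V=A_1$ and $m=d+N-2$). Since $W$ is one-dimensional and $H$-colinear, the coaction on $W$ is multiplication by the grouplike element $\hdet\in H$, by the very definition of the homological codeterminant adopted here. The Nakayama automorphism $\mu_A$ of $A$ has a standard description in terms of the action of a twisted cyclic permutation on the superpotential $W$, in the spirit of Bocklandt--Schedler--Wemyss and Van den Bergh, which reduces the computation of $\mu_A$ to tracking a character on $H$ acting on $W$.

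Next I would apply Theorem~\ref{Intro:super} (after translating between left and right 2-cocycle conventions) to obtain an isomorphism of graded comodule algebras $A_\sigma\cong A(W_\sigma,N)$, where $W_\sigma\subseteq V^{\otimes m}$ is the $H^\sigma$-subcomodule obtained by twisting the coaction on $W$. Reading off the Nakayama automorphism of the right-hand side from the action on the twisted superpotential, I would obtain an explicit formula of the shape
\begin{equation*}
\mu_{A_\sigma}(a)\;=\;\Xi_\sigma\!\left(a_{(1)}\right)\,\mu_A\!\left(a_{(0)}\right),
\end{equation*}
where $\rho(a)=a_{(0)}\otimes a_{(1)}$ is the original $H$-coaction on $A$ and $\Xi_\sigma:H\to\kk$ is a convolution-style character built explicitly from $\sigma$, $\sigma^{-1}$, and $\hdet$. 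Finally, I would verify that the map so defined is a graded algebra automorphism of $A_\sigma$ and satisfies the defining bimodule twist identity for the Nakayama automorphism; uniqueness then forces $\mu_{A_\sigma}$ to be given by this formula.

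The main obstacle I anticipate is careful bookkeeping: reconciling the left versus right 2-cocycle conventions between the present statement and Theorem~\ref{Intro:super}, identifying the $H$-coaction on $W$ with the $H$-coaction on $\Ext^d_A(\kk,\kk)$ (which, together with a duality shift, is how $\hdet$ is typically detected), and tracking how both the left and right actions in the bimodule $\Ext^d_{A_\sigma^e}(A_\sigma,A_\sigma^e)$ are simultaneously modified by $\sigma$. Once these pieces are aligned, the cocycle identity forces the convolution character $\Xi_\sigma$ into an essentially unique closed form in terms of $\sigma$ and $\hdet$, yielding the desired formula.
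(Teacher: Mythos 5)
Your route is genuinely different from the paper's. The paper does not use the superpotential presentation to prove \Cref{Cor:Naka}: it first proves a general result (\Cref{lem:Nakatwist}) computing the Nakayama automorphism of the cocycle twist ${}_\sigma E$ of an \emph{arbitrary} connected graded Frobenius $H$-comodule algebra $E$, by constructing the dual bases $\{\widetilde{b_i}\}$, $\{\widetilde{c_i}\}$ for the twisted multiplication and solving the resulting matrix equations with the cocycle identity; it then specializes to the Frobenius Ext-algebra $E(A)$, using $E(A_\sigma)\cong{}_{\sigma^{-1}}E(A)$ (via \Cref{lem:dualtwist}) and the relation of \cite{BM06} between $\mu_A$ and $\mu_{E(A)}$ to transport the answer back to $A_\sigma$. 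Your plan instead stays on the algebra side, via the Dubois--Violette presentation and \Cref{prop:2.2.5}. That is a legitimate alternative in principle, since the Nakayama automorphism of a derivation-quotient algebra is controlled by the twisting automorphism of its (pre)regular form.

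However, there is a genuine gap at the decisive step. The content of the theorem is the explicit formula, and in your plan that formula is produced by ``reading off'' $\mu_{A(W_\sigma,N)}$ from the twisted superpotential. That is exactly where all the work lies: you must show that $W_\sigma=\xi_{V^{\otimes m}}(W)$ is again a twisted superpotential and compute its twisting automorphism, which requires commuting the cyclic shift past $\xi_{V^{\otimes m}}$; the failure of these to commute is measured precisely by the triple-convolution expression in $\sigma$, $\sigma^{-1}$, $S$, $S^2$ and the codeterminant $g$ appearing in \Cref{Cor:Naka}. Asserting that ``the cocycle identity forces $\Xi_\sigma$ into an essentially unique closed form'' does not substitute for this derivation --- the paper's proof of \Cref{lem:Nakatwist} consists entirely of that computation. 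Two smaller points: the degree of the superpotential of an $N$-Koszul AS-regular algebra of global dimension $d$ is $\delta(d)$ (that is, $dN/2$ or $(d-1)N/2+1$), not $d+N-2$ in general; and the displayed formula $\mu_{A_\sigma}(a)=\Xi_\sigma(a_{(1)})\mu_A(a_{(0)})$ should only be asserted on degree-one generators, since $\mu_A$ and $\mu_{A_\sigma}$ are automorphisms of two different multiplications on the same graded vector space and the formula does not extend multiplicatively in this naive form.
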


Finally, for the fourth application, we examine the finite generation of the Hochschild cohomology ring under some 2-cocycle twists of the underlying algebra. This finite generation is an essential assumption to the theory of support varieties, which originates from group representation theory and enables one to use geometric approaches to study representations \cite{Witherspoon2019}. Our result provides more examples of algebras whose Hochschild cohomology rings are finitely generated.

\begin{introdethm}
[\Cref{thm:FGCHC}]\label{Intro:HH}
Let $H$ be a finite-dimensional commutative Hopf algebra, $A$ be a right $H$-comodule algebra, and $\sigma$ be any right 2-cocycle on $H$. If the Hochschild cohomology ring of $A$ is finitely generated, then so is that of $A_\sigma$.
\end{introdethm}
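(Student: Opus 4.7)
My plan is to realize the Hochschild cohomology $HH^*(A) = \Ext^*_{A^e}(A,A)$ using the enveloping algebra $A^e = A \otimes A^{\mathrm{op}}$ as an object in the category of $H \otimes H$-comodules, exhibit $A_\sigma^e$ as a 2-cocycle twist of $A^e$ by a suitable cocycle $\widetilde{\sigma}$ on $H \otimes H$, and then transfer the finite generation statement across the induced Morita--Takeuchi equivalence of comodule categories.

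The first key observation is that because $H$ is commutative, $A^{\mathrm{op}}$ is itself a right $H$-comodule algebra under the same coaction as $A$: multiplicativity of $\rho$ with respect to the opposite product comes down to the identity $a_{[1]} b_{[1]} = b_{[1]} a_{[1]}$ in $H$, which is precisely commutativity. Consequently $A^e$ is naturally a right $H \otimes H$-comodule algebra, and $H \otimes H$ is again a finite-dimensional commutative Hopf algebra. Using the explicit formulas for the twisted products in $A_\sigma$ and $A_\sigma^{\mathrm{op}}$, I would build a right 2-cocycle $\widetilde{\sigma}$ on $H \otimes H$ out of two copies of $\sigma$ evaluated on appropriate components, with the property that $(A^e)_{\widetilde{\sigma}} \cong A_\sigma^e$ as $(H \otimes H)^{\widetilde{\sigma}}$-comodule algebras. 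The verification that $\widetilde{\sigma}$ satisfies the 2-cocycle identity is where commutativity of $H$ plays its decisive role: it is what allows the cross-terms arising from the two copies of $\sigma$ to reassemble into a single valid identity on $H \otimes H$.

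From that point on, the argument proceeds in parallel with \Cref{Intro:numregular}: the monoidal equivalence $\comod(H \otimes H) \overset{\otimes}{\simeq} \comod((H \otimes H)^{\widetilde{\sigma}})$ descends to an equivalence between the relative module categories over $A^e$ and over $A_\sigma^e$, and the canonical bimodule $A$ is transported to the canonical bimodule $A_\sigma$. Because the equivalence preserves the abelian structure and the Yoneda product on $\Ext$ is intrinsic to that structure, one obtains a graded algebra isomorphism
\begin{equation*}
HH^*(A) \;=\; \Ext^*_{A^e}(A,A) \;\cong\; \Ext^*_{A_\sigma^e}(A_\sigma,A_\sigma) \;=\; HH^*(A_\sigma),
\end{equation*}
from which finite generation of $HH^*(A_\sigma)$ is immediate.

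The central obstacle is the construction and verification of $\widetilde{\sigma}$: one must write down the correct cocycle on $H \otimes H$, check the 2-cocycle identity by carefully exploiting commutativity of $H$, and confirm that the induced twist sends the $A^e$-action on $A$ to the $A_\sigma^e$-action on $A_\sigma$ in a way compatible with the Morita--Takeuchi equivalence. Once these details are in place, the transfer of finite generation is a formal consequence of the relative-module machinery already developed in the paper.
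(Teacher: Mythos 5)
Your setup is reasonable and broadly parallel to the paper's: the paper also exploits commutativity of $H$ to make $\Hom_{A\otimes A^{\op}}(M,N)$ an $H$-comodule (working with the bimodule category $\!_A\mathcal M_A^H$ over a single copy of $H$ rather than with $H\otimes H$, but that is a cosmetic difference), and it also transports the bar resolution of $A$ across the equivalence. The problem is your final step.

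The genuine gap is the claimed graded algebra isomorphism $\mathrm{HH}^*(A)\cong \mathrm{HH}^*(A_\sigma)$. The Morita--Takeuchi equivalence is an equivalence of the \emph{relative} module categories (modules internal to the comodule categories), not of the full categories of $A^e$-modules and $A_\sigma^e$-modules; the Yoneda product on the absolute $\Ext^*_{A^e}(A,A)$ is intrinsic to the latter, not the former. What the equivalence actually yields (this is \Cref{lem:bimodule}(3),(5) in the paper) is that the full Hom spaces are recovered only as \emph{twisted} comodules, and that composition is $H^{\op}$-colinear, so the cup product picks up cocycle factors. The correct conclusion is therefore
\[
\mathrm{HH}^\bullet(A_\sigma)~\cong~\bigl(\mathrm{HH}^\bullet(A)\bigr)_{\sigma^{\op}}
\]
as graded comodule algebras over $H^{\sigma^{\op}}$ --- a $2$-cocycle twist, not an isomorphism of algebras (twisting can turn a polynomial ring into a skew polynomial ring, for instance). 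Finite generation then does not follow ``immediately''; one needs the separate, nontrivial fact that finite generation of an $H$-comodule algebra is preserved under $2$-cocycle twists when $H$ is finite-dimensional, which the paper imports from \cite[Proposition 3.1(1)]{M2005}. Note that your argument never uses finite-dimensionality of $H$ at this stage, which is a signal that something is missing: the paper needs it both to identify $\underline{\HOM}_\kk$ with $\Hom_\kk$ and for this last transfer of finite generation.
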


\subsection*{Acknowledgements} 

The authors thank Ellen Kirkman and James Zhang for useful discussions, and thank the referee for their careful reading and suggestions which improved the paper. Some results in this paper were formulated at the Structured Quartet Research Ensembles (SQuaREs) program in March 2022, and at the BIRS Workshop on ``Noncommutative Geometry and Noncommutative Invariant Theory" in September 2022. The authors thank the American Institute of Mathematics, the Banff International Research Station, and the organizers of the BIRS Workshop for their hospitality and support. Nguyen was partially supported by the Naval Academy Research Council and NSF grant DMS-2201146. Ure was partially supported by an AMS--Simons Travel Grant. Veerapen was partially supported by an AWM--NSF Travel Grant. Vashaw was partially supported by NSF Postdoctoral Fellowship DMS-2103272. Wang was partially supported by Simons collaboration grant \#688403 and AFOSR grant FA9550-22-1-0272.

\subsection*{Conventions} 

Throughout the paper, let $\kk$ be a base field with $\otimes$ taken over $\kk$ unless stated otherwise. All categories are $\kk$-linear and all algebras are associative over $\kk$.  A $\mathbb Z$-graded algebra $A=\bigoplus_{i\in \mathbb Z} A_i$ is called \emph{connected graded} if $A_i=0$ for $i<0$ and $A_0=\kk$. A $\mathbb{Z}$-graded algebra $A$ is called \emph{locally finite} if $\dim_{\kk} A_i < \infty$ for all $i$. We use the Sweedler notation for the coproduct in a coalgebra $B$: for any $h \in B$, $\Delta(h) = \sum h_1 \otimes h_2 \in B \otimes B$. When a bialgebra $B$ right (resp.~ left) coacts on an algebra $A$, we denote the \emph{right} coaction $\rho: A \to A \otimes B$ by $a \mapsto \sum a_0 \otimes a_1 $ (resp.~ the \emph{left} coaction $\rho: A \to B \otimes A$ by $a \mapsto \sum a_{-1} \otimes a_0$). When we refer to a Hopf algebra $H$ coacting on an algebra $A$, we mean that $A$ is an $H$-comodule algebra.
The category of all (resp.~ finite-dimensional) right $B$-comodules is denoted by ${\rm comod}(B)$ (resp.~${\rm comod}_{\rm fd}(B))$.

\section{Twisting Manin's universal quantum groups}
\label{sec:TwistingManin}
 
In this section, we study a family of infinite-dimensional Hopf algebras, called Manin's universal quantum groups, denoted by $\underline{{\rm aut}}(A)$, which universally coact on connected graded quadratic algebras $A$ \cite{Manin2018}. Here, we consider $\underline{\rm aut}(A)$ under a more general setting, namely, when $A$ is finitely generated subject to $m$-homogeneous relations. For such algebras $A$, we study the 2-cocycle twist of $\underline{{\rm aut}}(A)$ and show that a Zhang twist of $A$ can be realized as a 2-cocycle twist, showing that quantum-symmetric equivalence is a graded Morita invariant.

\subsection{Manin's universal quantum groups}\label{subsec:Manin twists}

Let $m\ge 2$ be an integer. In this subsection, we give an explicit construction of Manin's universal quantum group associated to a graded connected algebra $A$, which is finitely generated subject to $m$-homogeneous relations. We recall Manin's definition here \cite{Manin2018}.

\begin{defn}
\label{def:ManinU} 
Let $A$ be any $\mathbb Z$-graded locally finite $\kk$-algebra. The \emph{right universal bialgebra $\underline{\rm end}^r(A)$ associated to $A$} is the bialgebra that right coacts on $A$ preserving the grading of $A$ via $\rho: A\to A \otimes \underline{\rm end}^r(A)$ satisfying the following universal property: if $B$ is any bialgebra that right coacts on $A$ preserving the grading of $A$ via $\tau: A\to A \otimes B$, then there is a unique bialgebra map $f: \underline{\rm end}^r (A)\to B$ such that the following diagram 
\begin{align}
\label{def:aut}
\xymatrix{
A\ar[r]^-{\rho}\ar[dr]_-{\tau} & A \otimes \underline{\rm end}^r(A) \ar[d]^-{\id \otimes f} \\
& A \otimes B
}
\end{align}
commutes. The right universal quantum group $\uaut^r(A)$ is the universal Hopf algebra right coacting on $A$. The universal left-coacting bialgebra and Hopf algebra, $\uend^l(A)$ and $\uaut^l(A)$, respectively, are defined analogously.
\end{defn} 

\begin{remark}
Throughout the paper, statements referring to $\uaut(A)$ will mean that the analogous statements for $\uaut^r(A)$ and $\uaut^l(A)$ are both true (and likewise for $\uend(A)$). 
\end{remark}

While universal quantum groups may or may not exist for an arbitrary graded algebra $A$, the following result guarantees its existence when $A$ is locally finite $\mathbb Z$-graded.

\begin{thm}\cite[Theorem 3.16 and Example 4.8(1)-(2)]{AGV}
If $A$ is a locally finite $\mathbb Z$-graded algebra, then $\uaut(A)$ exists.    
\end{thm}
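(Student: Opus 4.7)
The plan is to build $\uend^r(A)$ and $\uend^l(A)$ explicitly as bialgebras by generators and relations, and then obtain the universal quantum groups $\uaut^r(A)$ and $\uaut^l(A)$ by applying Takeuchi's free Hopf algebra construction. I will describe the right-handed version; the left-handed case is entirely parallel. The key point is that local finiteness makes each graded piece into a finite-dimensional vector space, so one can encode any grading-preserving coaction as a collection of matrix coefficients in each degree, and then impose the relations needed for compatibility with multiplication.

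Concretely, I would fix a homogeneous $\kk$-basis $\{e_{n,i}\}_{i\in I_n}$ of each finite-dimensional piece $A_n$. Any bialgebra $B$ right-coacting on $A$ while preserving grading is determined by elements $x^{(n)}_{ij}\in B$ via $\rho(e_{n,i})=\sum_j e_{n,j}\ot x^{(n)}_{ji}$. Writing the multiplication of $A$ in these bases as $e_{p,i}e_{q,j}=\sum_k c^{k}_{pi,qj}\,e_{p+q,k}$, the requirement that $\rho$ be an algebra map compatible with the unit $A_0=\kk$ translates into the quadratic relations
\[\sum_k c^{k}_{pi,qj}\,x^{(p+q)}_{\ell k}\ =\ \sum_{s,t} c^{\ell}_{ps,qt}\,x^{(p)}_{si}\,x^{(q)}_{tj},\qquad x^{(0)}_{ij}=\delta_{ij},\]
for all $p,q,i,j,\ell$. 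I would then define $\uend^r(A)$ as the free $\kk$-algebra on the symbols $x^{(n)}_{ij}$ modulo these relations, equipped with comultiplication $\Delta(x^{(n)}_{ij})=\sum_k x^{(n)}_{ik}\ot x^{(n)}_{kj}$ and counit $\varepsilon(x^{(n)}_{ij})=\delta_{ij}$. A direct calculation shows that $\Delta$ and $\varepsilon$ preserve the defining ideal, so $\uend^r(A)$ is a bialgebra, and the formula above endows $A$ with a grading-preserving coaction. Universality is automatic: any other coaction $\tau:A\to A\ot B$ produces matrix coefficients in $B$ satisfying the same relations, hence a unique bialgebra map $\uend^r(A)\to B$ that factors $\tau$ as in diagram~\eqref{def:aut}.

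For the passage from the bialgebra $\uend^r(A)$ to the Hopf algebra $\uaut^r(A)$, I would invoke Takeuchi's free Hopf algebra construction: for every bialgebra $B$ there is a Hopf algebra $H(B)$ together with a bialgebra map $\iota_B:B\to H(B)$ universal among bialgebra morphisms from $B$ to Hopf algebras. Setting $\uaut^r(A):=H(\uend^r(A))$, the coaction of $\uend^r(A)$ on $A$ pushes forward along $\iota$ to a grading-preserving coaction of $\uaut^r(A)$ on $A$. The universal property required by \Cref{def:ManinU} then follows by composing two universal properties: a Hopf algebra $H$ coacting on $A$ yields a unique bialgebra map $\uend^r(A)\to H$, which in turn factors uniquely through $\uaut^r(A)$.

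The main obstacle I expect is twofold. First, one must verify rigorously that $\Delta$ and $\varepsilon$ descend to the quotient defining $\uend^r(A)$; this amounts to a lengthy but mechanical bookkeeping showing that each defining relation is sent under $\Delta$ to a linear combination of products of defining relations, with the tensor structure playing nicely against the generators of different degrees. Second, one must confirm that Takeuchi's envelope behaves as advertised for our specific bialgebra, so that the induced $\uaut^r(A)$-coaction on $A$ is genuinely a comodule-algebra structure and the universal property is not lost under the Hopf envelope. Independence of the construction from the choice of basis $\{e_{n,i}\}$ is then a formal corollary of the universal property, producing a canonical isomorphism between any two presentations.
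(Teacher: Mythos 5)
The paper does not prove this statement itself—it quotes it from \cite{AGV}—and its own explicit construction in the special case of connected graded algebras with $m$-homogeneous relations (\Cref{lem:ManinM}) follows exactly your two-step pattern: present the universal bialgebra by generators and relations, then pass to the Hopf envelope. So your strategy is the standard one, and the skeleton is sound: your relations are precisely multiplicativity of $\rho$ on products of basis vectors; $\Delta$ and $\varepsilon$ do descend to the quotient (the deferred computation is the usual check that each defining relation is sent into $I\otimes F+F\otimes I$, using the relation once in each tensor factor); the matrix coefficients of any grading-preserving coaction automatically satisfy the comatrix identities $\Delta(b_{ij})=\sum_k b_{ik}\otimes b_{kj}$, $\varepsilon(b_{ij})=\delta_{ij}$ by coassociativity and counitality, so the induced algebra map is a bialgebra map; and the universal property of $\uaut^r(A)$ follows by composing the universal property of $\uend^r(A)$ with that of Takeuchi's free Hopf algebra, exactly as you describe.

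The one concrete error is the relation $x^{(0)}_{ij}=\delta_{ij}$. The theorem concerns an arbitrary locally finite $\mathbb Z$-graded algebra, so $A_0$ need not be $\kk$; it is a finite-dimensional subalgebra of $A$ on which a bialgebra may coact nontrivially. Imposing $x^{(0)}_{ij}=\delta_{ij}$ forces the universal coaction to be trivial on $A_0$, so the resulting bialgebra would not receive a factoring map from any coaction that moves $A_0$, and universality fails in the stated generality. The correct unitality constraint is only $\rho(1_A)=1_A\otimes 1$: writing $1_A=\sum_i\lambda_i e_{0,i}$, impose $\sum_i\lambda_i x^{(0)}_{ji}=\lambda_j\cdot 1$ for all $j$. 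With that replacement (and no other change) your construction, the descent of $\Delta$ and $\varepsilon$, and both universality arguments go through, and independence of the choice of bases is, as you say, a formal consequence of the universal property.
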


In what follows, we provide a concrete way to construct $\uaut(A)$ if $A$ is a connected graded algebra generated by $x_1,\ldots,x_n$ in degree one, subject to $m$-homogeneous relations for some integer $m>2$. This extends the explicit construction in \cite[Chapter 6]{Manin2018} and \cite[\S 2.1]{CWZ2014}. Suppose that the $m$-homogeneous relations are given by $\{r_w=0 \mid 1\leq w \leq t\}$, where
\[
r_w:=\sum_{1\leq i_1,\ldots,i_m\leq n} c_w^{i_1\cdots i_m} x_{i_1}\cdots x_{i_m}, 
\] 
with $c_w^{i_1\cdots i_m}\in \kk$. We assume that $\{r_1,\ldots,r_t\}$ are linearly independent. We write $A=\kk\langle A_1\rangle/(R)$, where $A_1={\rm span}_\kk\{x_1,\ldots,x_n\}$ and $R=\bigoplus_{i=1}^t\kk r_i\subseteq (A_1)^{\otimes m}$. The \emph{$m$-Koszul dual} of $A$, denoted by $A^!$, is an algebra $\kk\langle A_1^*\rangle/(R^\perp)$, where $A_1^*=\Hom(A_1,\kk)$ and 
\begin{equation}\label{eq:Rperp}
R^\perp:=\left\{a\in (A_1^*)^{\otimes m} \mid \langle a,R\rangle =0\right\}=\left\{a\in (A_1^*)^{\otimes m} \mid \langle a,r_w\rangle=0 \text{ for all } 1\leq w\leq t \right\}.
\end{equation}
Let $\{x^1,\ldots,x^n\}$ be the dual basis for $(A^!)_1= A_1^*$ relative to the standard basis $\{x_1, \ldots, x_n\}$ of $A_1$ and choose a basis for $R^\perp$, say $\{r^u\}$, where we write 
\[
r^u=\sum_{1\leq i_1,\ldots,i_m\leq n} c^u_{i_1\cdots i_m} x^{i_1}\cdots x^{i_m}
\]
for all $1\leq u\leq n^m-t$. According to \eqref{eq:Rperp}, we have
\begin{equation}\label{eq:mrelation}
    \sum_{1\leq i_1,\ldots,i_m\leq n} c^u_{i_1\cdots i_m}c_w^{i_1\cdots i_m}=0
\end{equation}
for all possible $u,w$. 
The following lemma is used to induce a coaction on $A$ from a coaction on the free algebra $\kk\langle x_1,..., x_n \rangle$, which extends the well-known result in the quadratic case \cite[Lemma 2.2]{CWZ2014} to any connected graded algebra with $m$-homogeneous relations.

\begin{lemma}
\label{lemma:bialgebraM}
Let $A$ be the quotient algebra of the free algebra $A'=\kk\langle x_1,\dots,x_n\rangle$ by $m$-homogeneous relations $R=\bigoplus_{w=1}^t\kk\, r_w$ as above. Let $B$ be a bialgebra that right coacts on $A'$ with $\rho(x_i)=\sum_{j=1}^n x_j\otimes b_{ji}$ for some $b_{ji}\in B$. The coaction $\rho: A'\to A'\otimes B$ satisfies $\rho(R)\subseteq R\otimes B$, if and only if
\begin{equation}\label{eq:coactionR}
\sum_{\substack{1\leq i_1,\ldots,j_m\leq n \\ 1\leq j_1,\ldots,j_m\leq n}} c_w^{i_1\cdots i_m}c^u_{j_1\cdots j_m}b_{j_1i_1}\cdots b_{j_mi_m}=0
\end{equation}
for all possible $u,w$. 
\end{lemma}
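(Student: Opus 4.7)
The plan is to compute $\rho(r_w)$ explicitly using the multiplicativity of the coaction, and then detect membership in $R \otimes B$ via the duality pairing between $(A_1)^{\otimes m}$ and $(A_1^*)^{\otimes m}$. First I would extend $\rho$ as an algebra map on $A'$ and obtain
\[
\rho(r_w)=\sum_{1\le i_1,\ldots,i_m\le n} c_w^{i_1\cdots i_m}\,\rho(x_{i_1})\cdots\rho(x_{i_m})=\sum_{1\le j_1,\ldots,j_m\le n} x_{j_1}\cdots x_{j_m}\otimes B_w^{j_1\cdots j_m},
\]
where I set $B_w^{j_1\cdots j_m}:=\sum_{i_1,\ldots,i_m} c_w^{i_1\cdots i_m}\, b_{j_1 i_1}\cdots b_{j_m i_m}\in B$. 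So the condition $\rho(R)\subseteq R\otimes B$ reduces to checking, for each $w$, whether the element on the right lies in $R\otimes B$.

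Next I would use the defining property of $R^\perp$: an element $v=\sum \alpha_{j_1\cdots j_m}\,x_{j_1}\cdots x_{j_m}\in (A_1)^{\otimes m}$ lies in $R$ if and only if $\langle r^u,v\rangle=\sum_{j_1,\ldots,j_m} c^u_{j_1\cdots j_m}\,\alpha_{j_1\cdots j_m}=0$ for every $u$, since $\{r^u\}$ is a basis of $R^\perp$. Fixing any $\kk$-basis $\{e_\beta\}$ of $B$ and decomposing $B_w^{j_1\cdots j_m}=\sum_\beta (B_w^{j_1\cdots j_m})_\beta\,e_\beta$ (a finite sum), one rewrites $\rho(r_w)=\sum_\beta y_{w,\beta}\otimes e_\beta$ with $y_{w,\beta}\in (A_1)^{\otimes m}$. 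Then $\rho(r_w)\in R\otimes B$ is equivalent to $y_{w,\beta}\in R$ for every $\beta$, which by the previous characterization amounts to the vanishing in $B$ of
\[
(\langle r^u,-\rangle\otimes \id_B)(\rho(r_w))=\sum_{j_1,\ldots,j_m} c^u_{j_1\cdots j_m}\,B_w^{j_1\cdots j_m}
\]
for every $u$.

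Substituting the formula for $B_w^{j_1\cdots j_m}$ into the last display yields exactly the left-hand side of \eqref{eq:coactionR}, so running this equivalence over all $w$ and $u$ finishes the proof. The argument is essentially the standard dualization trick identifying $R\otimes B\subseteq (A_1)^{\otimes m}\otimes B$ as the joint kernel of the maps $\langle r^u,-\rangle\otimes \id_B$, and I do not anticipate a serious obstacle beyond careful index bookkeeping; in particular, the orthogonality relation \eqref{eq:mrelation} is not invoked in the proof, as it merely records that the chosen bases $\{r_w\}$ and $\{r^u\}$ are dual in the sense $R^\perp$. The main thing to be careful about is that $B$ may be infinite-dimensional, which is why I introduce the basis $\{e_\beta\}$ of $B$ rather than working coordinate-free.
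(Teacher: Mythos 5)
Your proposal is correct and follows essentially the same route as the paper: the paper also identifies $R$ with $(R^\perp)^\perp$ by duality and characterizes $\rho(R)\subseteq R\otimes B$ as the vanishing of $\langle R^\perp\otimes \id_B,\rho(R)\rangle$, which unpacks to \eqref{eq:coactionR}. Your version merely makes explicit the index bookkeeping and the basis decomposition of $B$ (to justify that $R\otimes B$ is the joint kernel of the maps $\langle r^u,-\rangle\otimes\id_B$), and your observation that \eqref{eq:mrelation} plays no role here is accurate.
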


\begin{proof}
Since $R\subseteq (A_1)^{\otimes m}$, we have $\rho(R)\subseteq (A_1)^{\otimes m} \otimes B$. By duality, 
\[
R=\left\{r\in (A_1)^{\otimes m} \mid \langle R^\perp,r\rangle=0\right\}.
\] 
Thus, $\rho(R)\subseteq R\otimes B$ if and only if $\langle R^\perp \otimes \id_B,\rho(R)\rangle=0$ if and only if \eqref{eq:coactionR} holds.
\end{proof}

For two connected graded algebras $A=\kk\langle A_1\rangle/(R(A))$ and $B=\kk\langle B_1\rangle/(R(B))$ with $m$-homogeneous relations $R(A)\subseteq (A_1)^{\otimes m}$ and $R(B)\subseteq (B_1)^{\otimes m}$ respectively, we extend Manin's bullet product \cite[\S 4.2]{Manin2018} to $A$ and $B$ such that
\[
A\bullet B~:=~\frac{\kk \langle A_1\otimes B_1\rangle}{\left(\tau(R(A)\otimes R(B))\right)},
\]
where $\tau: (A_1)^{\otimes m} \otimes (B_1)^{\otimes m}\to (A_1\otimes B_1)^{\otimes m}$ is the permutation considered by shuffling $m$-tensor factors of $B_1$ into $m$-tensor factors of $A_1$, resulting in alternating tensor factors from $A_1$ and $B_1$. 
When $B=A^!=\kk\langle (A_1)^*\rangle/(R(A)^\perp)$ is the $m$-Koszul dual algebra of $A$, we see that $A\bullet A^!$ is a connected graded bialgebra with matrix comultiplication defined below on the generators of $A_1\otimes A_1^*$. As before, choose a basis $\{x_1,\ldots,x_n\}$ for $A_1$ and let $\{x^1,\ldots,x^n\}$ be the dual basis for $(A^!)_1= A_1^*$. Write $z_j^k=x_j\otimes x^k \in A_1\otimes A_1^*$ as the generators for $A\bullet A^!$. Then the coalgebra structure of $A\bullet A^!$ is given by
\[
\Delta(z_j^k)=\sum_{1\leq i\leq n} z_i^k\otimes z_j^i, \quad \text{and} \quad \varepsilon(z_j^k)=\delta_{j,k},\quad \text{for any }  1 \leq j,k \leq n.
\]

The following result is a straightforward generalization of the quadratic case \cite[Proposition 3.1.2, Proposition 3.1.3, Lemma 3.2.1]{HNUVVW21}, which describes Manin's universal bialgebra  $\underline{\rm end}(A)$ and Manin's universal quantum group $\underline{\rm aut}(A)$.

\begin{lemma}
\label{lem:ManinM}
Let $A=\kk\langle A_1\rangle/(R(A))$ and $B=\kk\langle B_1\rangle/(R(B))$ be two connected graded algebras finitely generated in degree one with $m$-homogeneous relations. Then we have:
\begin{itemize}
    \item[(1)] $\underline{\rm end}^r(A)\cong A\bullet A^!$ and $\underline{\rm end}^l(A)\cong A^!\bullet A$; 
    \item[(2)] $\underline{\rm aut}^r(A)$ is the Hopf envelope of $\underline{\rm end}^r(A)$ and $\underline{\rm aut}^l(A)$ is the Hopf envelope of $\underline{\rm end}^l(A)$;
     \item[(3)] $\underline{\rm aut}^r(A)\cong \underline{\rm aut}^l(A^!)$;
    \item[(4)] For any algebra map $f: A\to A\otimes B$ satisfying $f(A_1)\subseteq A_1\otimes B_1$, there exists a unique graded algebra map $g: \underline{\rm end}^r(A)\to B$ such that $f=(\id \otimes g)\circ \rho_A$ where $\rho_A: A\to A\otimes \underline{\rm end}^r(A)$ is the universal bialgebra coaction of $\underline{\rm end}^r(A)$ on $A$;  
    \item[(5)] For any algebra map $f: A\to B\otimes A$ satisfying $f(A_1)\subseteq B_1\otimes A_1$, there exists a unique graded algebra map $g: \underline{\rm end}^l(A)\to B$ such that $f=(g\otimes \id)\circ \rho_{A}$ where $\rho_{A}: A\to \underline{\rm end}^l(A)\otimes A$ is the universal bialgebra coaction of $\underline{\rm end}^l(A)$ on $A$. 
\end{itemize}
\end{lemma}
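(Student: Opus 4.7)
The plan is to adapt the strategy for the quadratic case from \cite[Propositions 3.1.2, 3.1.3, Lemma 3.2.1]{HNUVVW21}, with \Cref{lemma:bialgebraM} above replacing its quadratic analogue as the key technical input. The parts (2)--(5) are largely formal consequences of (1), so the bulk of the work is establishing the identification $\underline{\rm end}^r(A) \cong A \bullet A^!$.

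For part (1), the plan is to exhibit a bialgebra isomorphism in both directions. First, equip $A\bullet A^!$ with the matrix coaction on $A$ given by $x_i \mapsto \sum_{j} x_j \otimes z_j^i$, which clearly preserves the grading and generators. To check that this descends to a coaction on $A = \kk\langle A_1\rangle/(R)$, I would verify that the relations $\tau(R(A)\otimes R(A)^\perp)$ defining $A\bullet A^!$ are precisely the elements $\sum c_w^{i_1\cdots i_m} c^u_{j_1\cdots j_m}\, z_{j_1}^{i_1}\cdots z_{j_m}^{i_m}$; this is exactly equation \eqref{eq:coactionR} with $b_{ji}$ replaced by $z_j^i$, so \Cref{lemma:bialgebraM} guarantees $\rho(R)\subseteq R\otimes (A\bullet A^!)$. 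By the universal property of $\underline{\rm end}^r(A)$ in \Cref{def:ManinU}, this induces a bialgebra map $\underline{\rm end}^r(A) \to A\bullet A^!$. Conversely, applying \Cref{lemma:bialgebraM} to $\underline{\rm end}^r(A)$ itself shows that its matrix entries $b_{ji}$ satisfy \eqref{eq:coactionR}, which is the defining relation for the generators of $A\bullet A^!$; this yields a graded algebra map $A\bullet A^! \to \underline{\rm end}^r(A)$, which is then seen to be a bialgebra map from the matrix comultiplication. The two constructions are mutually inverse on generators. The left-handed case is analogous.

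For (2), the plan is to invoke the standard fact that the Hopf envelope functor is left adjoint to the forgetful functor from Hopf algebras to bialgebras. Since $\underline{\rm end}^r(A)$ is universal among bialgebras right-coacting on $A$ preserving the grading and $\underline{\rm aut}^r(A)$ is universal among Hopf algebras doing so, the adjunction immediately identifies $\underline{\rm aut}^r(A)$ with the Hopf envelope of $\underline{\rm end}^r(A)$. For (3), the plan is to use $m$-Koszul biduality $A^{!!}\cong A$ on the level of generators and $m$-homogeneous relations (from a dimension count using $\dim R + \dim R^\perp = n^m$ together with double annihilator), combined with the evident symmetry of the bullet construction: by (1), $\underline{\rm end}^r(A) \cong A\bullet A^! \cong (A^!)^!\bullet A^! \cong \underline{\rm end}^l(A^!)$, and passing to Hopf envelopes via (2) gives the claim for $\underline{\rm aut}$.

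For (4) and (5), the plan is to deduce the universal property from the explicit description in (1). Given an algebra map $f:A\to A\otimes B$ with $f(A_1)\subseteq A_1\otimes B_1$, define scalars $b_{ji}\in B_1$ by $f(x_i)=\sum_j x_j\otimes b_{ji}$. The requirement that $f$ respects the relations $R$ of $A$ is, by \Cref{lemma:bialgebraM} applied to the free algebra, exactly the identity \eqref{eq:coactionR} in $B$; this is in turn the defining relation for the generators $z_j^i$ of $A\bullet A^! \cong \underline{\rm end}^r(A)$. Hence there is a unique graded algebra map $g:\underline{\rm end}^r(A)\to B$ sending $z_j^i\mapsto b_{ji}$, and by construction $f = (\id\otimes g)\circ\rho_A$. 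The left-coacting case (5) is identical in spirit. The main obstacle across the proof is the bookkeeping of indices in verifying that the bullet product relations and the coaction compatibility \eqref{eq:coactionR} coincide under the $\tau$-shuffle; once this is isolated via \Cref{lemma:bialgebraM}, the rest of the argument proceeds in close parallel to the quadratic case.
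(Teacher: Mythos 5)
Your proposal is correct and follows essentially the same route as the paper: part (1) hinges on \Cref{lemma:bialgebraM} to identify the bullet-product relations with the coaction-compatibility condition \eqref{eq:coactionR} (the paper verifies the universal property of $A\bullet A^!$ directly against an arbitrary coacting bialgebra, whereas you build mutually inverse maps, which is an equivalent formulation), and parts (2)--(5) are handled by the same formal arguments (Hopf envelope universality, bullet-product symmetry with $m$-Koszul biduality, and the Manin-style factorization of algebra maps through the generators). No gaps.
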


\begin{proof}
(1): We use the above notation to denote $A=\kk\langle A_1\rangle/(R)$, where $R=\bigoplus_{w=1}^t\kk\, r_w\in (A_1)^{\otimes m}$. Its $m$-Koszul dual algebra is  $A^!=\kk\langle A_1^*\rangle/(R^\perp)$ with $R^\perp =\bigoplus_{u=1}^{n^m-t}\kk\, r^u\in (A_1^*)^{\otimes m}$ satisfying $\langle R^\perp, R\rangle=0$. Hence by definition, we have 
\[A\bullet A^!=\frac{\kk \langle z^k_j\rangle}{\left(\sum_{\substack{1\leq i_1,\ldots,i_m\leq n\\ 1\leq j_1,\ldots,j_m\leq n}} c_w^{i_1\cdots i_m}c^u_{j_1\cdots j_m} z_{i_1}^{j_1}\cdots z_{i_m}^{j_m}\right)}.\]
By \Cref{lemma:bialgebraM}, the bialgebra $A\bullet A^!$ right coacts on $A$ via $\rho: A\to A\otimes (A\bullet A^!)$ by
\[
\rho(x_i)=\sum_{1\leq j\leq n} x_j\otimes z^j_i, \quad \text{ for any } 1 \leq i \leq n.
\]
Suppose $D$ is another bialgebra that right coacts on $A$ via $\rho_D(x_i)=\sum x_j\otimes d_{ji}$ for some $d_{ji}\in D$. It is clear that $\Delta(d_{ji})=\sum d_{jk}\otimes d_{ki}$ and $\varepsilon(d_{ji})=\delta_{ji}$. Since the $d_{ji}$'s satisfy the relations \eqref{eq:coactionR} by \Cref{lemma:bialgebraM}, we have a well-defined bialgebra map $f: A\bullet A^!\to D$ given by $f(z^k_j)=d_{kj}$. Moreover, it is clear that $f$ is the unique bialgebra map such that $(\id_D\otimes f)\circ \rho=\rho_D$. Therefore, by the universal property, we have $\underline{\rm end}^r(A)\cong A\bullet A^!$. Similarly, one can prove that $\underline{\rm end}^l(A^!)\cong A^!\bullet A$.

(2): Similar to the construction of Manin's universal quantum groups associated to quadratic algebras, we show that the Hopf envelope of $\underline{\rm end}^r(A)$ exists with universal property. 

(3): It follows from (1) and (2) since $(A^!)^! \bullet A^! \cong A \bullet A^!$ as bialgebras. 

Finally, (4) and (5) can be proved by a similar argument to \cite[Lemma 6.6]{Manin2018}.
\end{proof}

\subsection{Two-cocycle twists of Manin's universal quantum groups}
\label{subsec:2cocycle}

In this subsection, we investigate the behavior of a 2-cocycle twist of a Manin's universal quantum group. We first recall the 2-cocycle twist introduced by Doi and Takeuchi \cite{Doi93,DT94}.

\begin{defn}\cite[Definition 7.7.1]{Radford2012}
\label{defn:cocycle} 
Let $B$ be a bialgebra over $\kk$.
A \emph{left 2-cocycle} on $B$ is a convolution invertible bilinear map $\sigma: B \times B \rightarrow \kk$ so that 
\[
    \sum \sigma(x_1, y_1)\, \sigma (x_2 y_2, z) = \sum \sigma(y_1,z_1)\, \sigma(x,y_2z_2), 
\]
for all $x,y,z \in B$. Similarly, \emph{a right 2-cocycle} on $B$ is a convolution invertible bilinear map $\sigma: B \times B \rightarrow \kk$ satisfying 
\[
    \sum \sigma(x_2, y_2)\, \sigma (x_1 y_1, z) = \sum \sigma(y_2,z_2)\, \sigma(x,y_1z_1).
\]
For any 2-cocycle $\sigma$, we denote its convolution inverse by $\sigma^{-1}$.
\end{defn}

Throughout, for simplicity, we assume all 2-cocycles are normal, that is, \[\sigma(x,1) = \sigma(1,x) = \varepsilon(x),\] 
for all $x \in B$. Note that if $\sigma$ is a right 2-cocycle on a Hopf algebra $H$, then $\sigma^{-1}$ is a left 2-cocycle on $H$ \cite[Lemma 7.7.2]{Radford2012}.
Given a right 2-cocycle $\sigma: B \times B \to \kk$ on a bialgebra $B$, let $B^\sigma = B$ as a  coalgebra, endowed with the original unit and deformed product
\begin{align}\label{def:2Hopf}
 x*_\sigma y:=\sum \sigma^{-1}(x_1,y_1)\,x_2y_2\,\sigma(x_3,y_3),
\end{align}
for any $x,y\in B$. In fact, for a Hopf algebra $H$ with antipode $S$, $H^\sigma$ is again a Hopf algebra with deformed antipode $S^{\sigma}$ \cite[Theorem 1.6]{Doi93} and \cite[Proposition 7.7.3]{Radford2012}. We call $H^\sigma$ the \emph{2-cocycle twist of $H$} by $\sigma$. It is well-known that two Hopf algebras are 2-cocycle twists of each other if and only if there exists a bicleft object between them (see e.g., \cite{Sch1996}). 

Suppose $A$ is a right $H$-comodule algebra via coaction $a \mapsto \sum a_0 \otimes a_1$ for any $a \in A$. For any right 2-cocycle $\sigma$ on $H$, one can define a \emph{twisted algebra} $A_{\sigma}$ of $A$ by $\sigma$ as follows. As vector spaces $A_{\sigma}=A$ and the product of $A_{\sigma}$ is defined to be
\begin{align}\label{def:twistp}
a\cdot_\sigma b=\sum \sigma(a_1,b_1)\,a_0b_0, \qquad \text{for all } a,b\in A.
\end{align}
We call $A_{\sigma}$ the \emph{2-cocycle twist of $A$} by $\sigma$. Note that $A_\sigma$ is naturally a right $H^\sigma$-comodule algebra.

\begin{defn}\cite{Sch1996} \label{defn:biGalois}
Let $H$ and $K$ be two Hopf algebras. A \emph{left $H$-Galois object} is a left $H$-comodule algebra $T$ such that the linear map
\[
T\otimes T\xrightarrow{\rho\otimes \id} H\otimes T\otimes T\xrightarrow{\id \otimes m} H\otimes T
\]
is bijective, where $\rho: T\to H\otimes T$ and  $m: T\otimes T\to T$ are the corresponding left $H$-comodule algebra structure maps on $T$. A \emph{right $K$-Galois object} can be defined analogously and an \emph{$H$-$K$-bi-Galois object} is an $H$-$K$-bicomodule algebra that is both a left $H$-Galois object and a right $K$-Galois object. 
\end{defn}

In particular, when we consider $H$ as a right comodule algebra over itself via its comultiplication, the twisted algebra $H_\sigma$ is an $H$-$H^\sigma$-bi-Galois object. As a consequence, $H$ and $H^\sigma$ are Morita--Takeuchi equivalent under the monoidal equivalence 
\begin{align} \label{eq:equiv2}
(F,\,\xi): {\rm comod}(H)~\stackrel{\otimes}{\cong}~{\rm comod}(H^\sigma),
\end{align}
where $F$ is defined by $F(U):=U_{\sigma}=U$ as vector space and 
$\xi$ is the natural isomorphism
\begin{align} \label{eq:monoidalstructure}
\xi_{U,V}: (U\otimes V)_{\sigma}~&\to~ U_{\sigma}\otimes_{\sigma} V_{\sigma}\\
u\otimes v~&\mapsto~ \sum \sigma^{-1}(u_1,v_1)\,u_0\otimes v_0, \notag
\end{align}
with inverse $\xi_{U,V}^{-1}: u\otimes v\mapsto \sum \sigma(u_1,v_1)u_0\otimes v_0$, for any $u \in U, v \in V$.

We shall show that a 2-cocycle twist of Manin's universal quantum group of a locally finite $\mathbb Z$-graded algebra $A$ by $\sigma$ is isomorphic to Manin's universal quantum group of the 2-cocycle twist $A_\sigma$. Consequently, these two quantum groups are Morita--Takeuchi equivalent.

\begin{thm}
\label{thm:action-Manin}
Let $A$ be any $\mathbb Z$-graded locally finite algebra and $H$ be any Hopf algebra that right coacts on $A$ preserving its grading. Then for any right 2-cocycle $\mu$ on $ H$, there is a right 2-cocycle $\sigma$ on $\underline{\rm aut}^r(A)$ such that there is an isomorphism between 2-cocycle twists
\[A_{\sigma}~\cong~A_{\mu},\]
as graded algebras. Additionally, we have 
\[\underline{\rm aut}^r(A_\mu)~\cong~ \underline{\rm  aut}^r(A)^\sigma\]
as Hopf algebras.
\end{thm}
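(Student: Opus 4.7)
The plan is to obtain $\sigma$ as a pullback of $\mu$ through the universal comparison map, and to realize both isomorphisms as instances of the universal property of $\underline{\rm aut}^r$.

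By \Cref{def:ManinU}, the $H$-coaction $\tau\colon A\to A\otimes H$ factors uniquely as $\tau=(\id\otimes\pi)\circ\rho$ for a Hopf algebra map $\pi\colon\underline{\rm aut}^r(A)\to H$, where $\rho\colon A\to A\otimes\underline{\rm aut}^r(A)$ is the universal coaction. I would set
\[
\sigma~:=~\mu\circ(\pi\otimes\pi)\colon\underline{\rm aut}^r(A)\otimes\underline{\rm aut}^r(A)\to\kk,
\]
with convolution inverse $\sigma^{-1}=\mu^{-1}\circ(\pi\otimes\pi)$. Since $\pi$ is a bialgebra homomorphism, the right 2-cocycle identity for $\mu$ pulls back along $\pi\otimes\pi$ to the analogous identity for $\sigma$, and normalization is inherited from normalization of $\mu$. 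Writing $\rho(a)=\sum a_0\otimes a_1$, the factorization gives $\tau(a)=\sum a_0\otimes\pi(a_1)$, so by \eqref{def:twistp}
\[
a\cdot_\sigma b~=~\sum\sigma(a_1,b_1)\,a_0b_0~=~\sum\mu(\pi(a_1),\pi(b_1))\,a_0b_0~=~a\cdot_\mu b,
\]
whence $A_\sigma=A_\mu$ as graded algebras.

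For the second isomorphism, I would first observe that the same underlying linear map $\rho$ realizes $A_\sigma$ as a right $\underline{\rm aut}^r(A)^\sigma$-comodule algebra: this is the standard fact, implicit in the discussion preceding \eqref{def:twistp}, that twisting the algebra and the coacting Hopf algebra by the same cocycle yields compatible structures. Through the identification $A_\sigma\cong A_\mu$ just established, the universal property of $\underline{\rm aut}^r(A_\mu)$ produces a unique Hopf algebra map $\phi\colon\underline{\rm aut}^r(A_\mu)\to\underline{\rm aut}^r(A)^\sigma$. For the reverse direction, I would exploit the reversibility of 2-cocycle twisting: there is a canonical right 2-cocycle $\bar\sigma$ on $\underline{\rm aut}^r(A_\mu)$, built from $\mu^{-1}$ and the analogous comparison map $\bar\pi\colon\underline{\rm aut}^r(A_\mu)\to H^\mu$ coming from the natural $H^\mu$-coaction on $A_\mu$, such that $(A_\mu)_{\bar\sigma}\cong A$ and $\underline{\rm aut}^r(A_\mu)^{\bar\sigma}$ coacts on $A$. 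The universal property of $\underline{\rm aut}^r(A)$ then provides a Hopf algebra map $\underline{\rm aut}^r(A)\to\underline{\rm aut}^r(A_\mu)^{\bar\sigma}$, which un-twists to the desired $\psi\colon\underline{\rm aut}^r(A)^\sigma\to\underline{\rm aut}^r(A_\mu)$. Both composites $\phi\psi$ and $\psi\phi$ factor the universal coactions on the respective algebras, so they equal the identity by the uniqueness clauses of \Cref{def:ManinU}.

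The main obstacle I foresee is the reversibility step in the second isomorphism: pinning down the correct cocycle $\bar\sigma$ on $\underline{\rm aut}^r(A_\mu)$ and confirming that $(A_\mu)_{\bar\sigma}\cong A$ and $\underline{\rm aut}^r(A_\mu)^{\bar\sigma}\cong\underline{\rm aut}^r(A)$ in a way that makes $\psi$ land in $\underline{\rm aut}^r(A_\mu)$ as claimed. Once this symmetric setup is in place, the remaining verifications become formal manipulations with Sweedler notation that are dictated by the cocycle axioms, the bialgebra property of $\pi$, and the uniqueness in the universal property.
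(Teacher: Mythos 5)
Your proposal is correct and follows essentially the same route as the paper: the paper likewise defines $\sigma=\mu\circ(f\times f)$ for the universal comparison map $f\colon\underline{\rm aut}^r(A)\to H$, verifies $A_\sigma\cong A_\mu$ by the same direct Sweedler computation, and establishes $\underline{\rm aut}^r(A_\mu)\cong\underline{\rm aut}^r(A)^\sigma$ via the universal property in both directions (a step it delegates to an argument from an earlier paper, which proceeds exactly as in your sketch using the inverse cocycle to untwist).
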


\begin{proof}
By the universal property of $\underline{\rm aut}^r(A)$, the graded right coaction of $H$ on $A$ uniquely factors through the universal graded right coaction of $\underline{\rm aut}^r(A)$ via a Hopf algebra map $f:\underline{\rm aut}^r(A)\to H$. We write the coaction $A\to A\otimes H$ as $x\mapsto \sum x_0\otimes x_1$ and the universal coaction $A\to A\otimes \underline{\rm aut}^r(A)$ as  $x\mapsto \sum x_0'\otimes x_1'$ for any $x\in A$ and we obtain $\sum x_0'\otimes f(x_1')=\sum x_0\otimes x_1$. Define
\[
\sigma:=\left(\underline{\rm aut}^r(A)\otimes \underline{\rm aut}^r(A)\xrightarrow{f\times f} H\times H\xrightarrow{\mu} \kk\right).
\]
It is direct to check that $\sigma$  is a right 2-cocycle on $\underline{\rm aut}^r(A)$ since $f$ is a Hopf algebra map. By definition, $A=A_\sigma=A_{\mu}$ as graded vector spaces and 
\[x\cdot_{\sigma}y=\sum \sigma(x_1', y_1')x_0'y_0'=\sum \mu(f(x_1'),f(y_1'))x_0'y_0'=\sum \mu(x_1, y_1)x_0y_0=x\cdot_{\mu}y\]
for any $x,y\in A$. So the identity map on the vector space $A$ gives an isomorphism  from $A_\sigma$ to $A_\mu$ as $H^\sigma$-comodule algebras. 

Finally, using an argument similar to \cite[Theorem 4.2.9]{HNUVVW2}, one can show that $\underline{\rm aut}^r(A_\sigma)$ and $\underline{\rm  aut}^r(A)^\sigma$ are isomorphic by using the universal property of Manin's universal quantum groups. 
\end{proof}

\subsection{Zhang twists vs. 2-cocycle twists of graded algebras}
\label{subsec:compare}

In this subsection, we use the notion of twisting pairs formulated from graded automorphisms of $A$, as done in \cite{HNUVVW21}, to construct 2-cocycle twists of $\underline{\rm aut}^r(A)$. Recall that if $A$ is a graded algebra and $\phi: A \to A$ is a graded automorphism of $A$, the \emph{right Zhang twist of $A$}, denoted by $A^{\phi}$, has the same graded vector space as $A$ with deformed product 
\[a \cdot_{\phi} b=a\phi^{|a|}(b),\] 
for any homogeneous elements $a,b\in A$, where $|a|$ denotes the degree of $a$. 

To understand the relationship between a Zhang twist $A^{\phi}$ and a 2-cocycle twist $A_{\sigma}$,  our strategy is to consider $A^{\phi}$ and $A_{\sigma}$ as comodule algebras under the coaction of the same Hopf algebra $H$. By \cite[Theorem E]{HNUVVW21}, we shall form a twisting pair from $\phi$, which can be used to determine a 2-cocycle. To that aim, we state the following definition and conditions in order to obtain this 2-cocycle twist of $H$.

\begin{defn}\cite[Definition B and D]{HNUVVW21}
\label{def:conditions}
Let $(B, m, u, \Delta, \varepsilon)$ be a bialgebra. A pair ($\phi_1, \phi_2$) of algebra automorphisms of $B$ is said to be a \emph{twisting pair} if the following conditions
hold:
\begin{enumerate}
    \item[(\textbf{P1})] $\Delta \circ  \phi_1 = (\id \otimes \phi_1)\circ \Delta$ and $\Delta \circ\phi_2 = (\phi_2 \otimes \id) \circ\Delta$; 
    \item [(\textbf{P1})] $\varepsilon\circ \phi_1\circ \phi_2=\varepsilon$. 
\end{enumerate}
\label{defn:twisting conditions}
Moreover, $B$ satisfies the \emph{twisting conditions} if 
\begin{enumerate}
    \item[(\textbf{T1})] as an algebra $B=\bigoplus_{n\in \mathbb{Z}} B_n$ is $\mathbb{Z}$-graded, and
    \item[(\textbf{T2})] the comultiplication satisfies $\Delta(B_n)\subseteq B_n\otimes B_n$, for all $n\in \mathbb Z$. 
\end{enumerate}
\end{defn}

As pointed out in \cite[Corollary 1.1.10]{HNUVVW21}, if $H=\bigoplus_{n\in \mathbb Z} H_n$ is a Hopf algebra that satisfies the twisting conditions, then $S(H_n)\subseteq H_{-n}$ for all $n\in \mathbb Z$. An equivalent description of twisting conditions on $B$ was given by Bichon et.~ al.~ in \cite[Lemma 1.3]{Bichon-Neshveyev-Yamashita2016} including the existence of a cocentral homomorphism from $B$ to the group algebra $\kk[\mathbb Z]$. Recall that a homomorphism of coalgebras $f: C\to D$ is said to be \emph{cocentral} if $\sum f(c_1)\otimes c_2=\sum f(c_2)\otimes c_1 \in D\otimes C$, for all $c \in C$. 

Here, we show that Manin's universal quantum group, indeed, satisfies the twisting conditions.

\begin{lemma}
\label{TCautA}
Let $A=\bigoplus_{n\in \mathbb Z} A_n$ be a $\mathbb Z$-graded algebra that is locally finite, and $\ara$ be its associated right universal quantum group via the universal coaction $\rho: A\to A\otimes \underline{\rm aut}^r(A)$. Then $\underline{\rm aut}^r(A)=\bigoplus_{n\in \mathbb Z} \left(\underline{\rm aut}^r(A)\right)_n$ has a natural $\mathbb Z$-grading such that $\rho: A_n\to A_n\otimes\left(\underline{\rm aut}^r(A)\right)_n$, for each $n \in \mathbb Z$. Moreover, $\underline{\rm aut}^r(A)$ satisfies the twisting conditions. Similar results hold for $\underline{\rm aut}^l(A)$.
\end{lemma}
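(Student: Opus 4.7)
The plan is to exhibit a cocentral Hopf algebra homomorphism $\pi\colon \uaut^r(A) \to \kk[\mathbb Z]$ and invoke the Bichon--Neshveyev--Yamashita characterization \cite[Lemma 1.3]{Bichon-Neshveyev-Yamashita2016} already cited in the excerpt, which equates the twisting conditions (T1), (T2) with the existence of such a cocentral map. The grading on $\uaut^r(A)$ then arises as the eigenspace decomposition for the right $\kk[\mathbb Z]$-coaction $(\id \otimes \pi)\Delta$, and its compatibility with $\rho$ is automatic.

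First, I would repackage the $\mathbb Z$-grading of $A$ as a grading-preserving right coaction $\alpha\colon A\to A\otimes \kk[\mathbb Z]$, sending $a\in A_n$ to $a\otimes t^n$. Since $\kk[\mathbb Z]$ is a Hopf algebra, the universal property of $\uaut^r(A)$ produces a unique Hopf algebra map $\pi\colon \uaut^r(A)\to \kk[\mathbb Z]$ with $\alpha=(\id\otimes\pi)\rho$. Setting
\[(\uaut^r(A))_n ~=~ \{h\in \uaut^r(A) \,:\, (\id\otimes\pi)\Delta(h)=h\otimes t^n\},\]
the fact that $(\id\otimes\pi)\Delta$ is both an algebra map and a right $\kk[\mathbb Z]$-coaction gives (T1). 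For $\rho(A_n)\subseteq A_n\otimes (\uaut^r(A))_n$, I would pick a basis $\{e_i\}$ of the finite-dimensional space $A_n$, write $\rho(e_i)=\sum_j e_j\otimes h_{ji}$, and deduce $\pi(h_{ji})=\delta_{ji}t^n$ from $(\id\otimes\pi)\rho(e_i)=e_i\otimes t^n$; the matrix coproduct $\Delta(h_{ji})=\sum_k h_{jk}\otimes h_{ki}$ then yields $(\id\otimes\pi)\Delta(h_{ji}) = h_{ji}\otimes t^n$, so $h_{ji}\in (\uaut^r(A))_n$.

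To upgrade (T1) to the full twisting conditions via BNY, I would show that $\pi$ is cocentral. On each matrix coefficient $h_{ji}$, both $\sum \pi((h_{ji})_1)\otimes (h_{ji})_2$ and $\sum\pi((h_{ji})_2)\otimes (h_{ji})_1$ collapse to $t^n\otimes h_{ji}$ using $\pi(h_{ji})=\delta_{ji}t^n$ and the matrix coproduct. To extend cocentrality to all of $\uaut^r(A)$, I would verify that the cocentral locus $Z$ is a Hopf subalgebra: closure under multiplication follows by computing the product $u_g\cdot u_h$ in $\kk[\mathbb Z]\otimes\uaut^r(A)$ using the two equal expressions $u_g=\sum\pi(g_1)\otimes g_2=\sum\pi(g_2)\otimes g_1$ for $g\in Z$ (and similarly for $h$), while closure under the antipode follows from $\pi\circ S=S_{\kk[\mathbb Z]}\circ \pi$ combined with $\Delta S=(S\otimes S)\tau\Delta$. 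Since matrix coefficients and their iterated antipodes generate $\uaut^r(A)$ as an algebra, $Z=\uaut^r(A)$. The case of $\uaut^l(A)$ is entirely analogous with left coactions throughout.

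The main subtle point is the cocentrality propagation, together with the need to justify that matrix coefficients generate $\uaut^r(A)$ as a Hopf algebra in the general locally finite $\mathbb Z$-graded setting; the latter is explicit in the $m$-homogeneous case of \Cref{lem:ManinM} via the $A\bullet A^!$ presentation and Hopf envelope, and is implicit in the construction underlying the existence theorem \cite{AGV}. Notably, the multiplicative closure of $Z$ does not require commutativity of $\kk[\mathbb Z]$, only that $\pi$ is an algebra map and that cocentrality holds separately at $g$ and $h$. Once these ingredients are in place, the BNY equivalence converts the cocentral map $\pi$ into the required grading satisfying both (T1) and the stronger (T2).
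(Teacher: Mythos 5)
Your proposal is correct and follows essentially the same strategy as the paper: both obtain the map to $\kk[\mathbb Z]$ from the universal property applied to the coaction $a\mapsto a\otimes t^{n}$ for $a\in A_n$, verify cocentrality on the matrix coefficients of the finite-dimensional pieces $A_n$, and conclude via \cite[Lemma 1.3]{Bichon-Neshveyev-Yamashita2016}. The only organizational difference is that the paper first reduces to the bialgebra $\uend^r(A)$ (transferring the twisting conditions to the Hopf envelope $\uaut^r(A)$ by a cited lemma of \cite{HNUVVW21}), where inner-faithfulness gives generation by the cosupport subcoalgebras and no antipode intervenes, whereas you work directly on $\uaut^r(A)$ and therefore must --- and correctly do --- check that the cocentral locus is closed under multiplication and the antipode before using generation by matrix coefficients and their iterated antipodes.
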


\begin{proof}
Recall that $\uend^r(A)$ is the universal bialgebra with right coaction $\rho$ on $A$ preserving the $\mathbb Z$-grading of $A$ with universal property satisfying \eqref{def:aut}. By \Cref{lem:ManinM}, $\uaut^r(A)$ is the Hopf envelope of $\uend^r(A)$. Hence, by \cite[Lemma 2.1.10]{HNUVVW21}, it suffices to prove the lemma for $\uend^r(A)$.

Now by \cite[Lemma 1.3]{Bichon-Neshveyev-Yamashita2016}, it is enough to show that there is a cocentral bialgebra map $f: \underline{\rm end}^r(A)\to \kk[\mathbb Z]$. Note that since $A$ is $\mathbb Z$-graded, it follows that $A$ is a graded right $\kk[\mathbb Z]$-comodule algebra via $a \mapsto a\otimes g^n$ for any $a\in A_n$ and $n \in \mathbb Z$, where $\mathbb Z=\langle g\rangle$. By the universal coaction $\rho: A\to A\otimes \underline{\rm end}^r(A)$, where $\rho(a)=\sum a_{0}\otimes a_1$, we get a unique bialgebra map $f: \underline{\rm end}^r(A)\to \kk[\mathbb Z]$ such that 
\begin{align}\label{lem:eq211}
\sum a_0\otimes f(a_1)=a\otimes g^n, \quad \text{ for all } a\in A_n.
\end{align}
It remains to show that $f: \underline{\rm end}^r(A)\to \kk[\mathbb Z]$ is cocentral. Denote by $V_n$ the cosupport of the left comodule $A_n$ in $\underline{\rm end}^r(A)$, that is, the smallest subspace $V_n$ of $\underline{\rm end}^r(A)$ such that $\rho(A_n)\subseteq A_n\otimes V_n$. It is straightforward to check that $V_n$ is a subcoalgebra of  $\underline{\rm end}^r(A)$. Since $\era$ coacts on $A$ inner-faithfully, $\underline{\rm end}^r(A)$ is generated by $\{V_n\}_{n\in \mathbb Z}$ as an algebra. Therefore, it is enough to show that 
\begin{align*}
    \sum f(v_1)\otimes v_2=\sum f(v_2)\otimes v_1, \quad \text{ for all } v\in V_n.
\end{align*}
Using the fact that the $V_n$-comodule structure on $A_n$ satisfies \eqref{lem:eq211}, we have
$\sum f(v_1)\otimes v_2=g^n\otimes v$ and so $f(v)=g^n\varepsilon(v)$ by (2.9).

Hence 
\begin{align*}
    \sum f(v_1)\otimes v_2=g^n\otimes v=\sum f(v_2)\otimes v_1.
\end{align*}
It follows that $f$ is cocentral. A similar argument holds for $\underline{\rm end}^l(A)$ and $\underline{\rm aut}^l(A)$.
\end{proof}

The following result shows that graded automorphisms of $A$ are naturally in one-to-one correspondence with 2-cocycles on Manin's universal quantum group $\underline{\rm aut}^r(A)$ given by twisting pairs.

\begin{thm}
\label{lem:2cocycleGrade}
Let $A$ be a connected graded algebra, finitely generated in degree one subject to $m$-homogeneous relations. The following groups are isomorphic.
\begin{itemize}
    \item[(1)] the group of graded automorphisms of $A$;
    \item[(2)] the group of twisting pairs of $\underline{\rm aut}^r(A)$ under component-wise composition (see \cite[Corollary 2.1.3]{HNUVVW21});
    \item[(3)] the group of one-dimensional representations of $\underline{\rm aut}^r(A)$ under tensor product of $\underline{\rm aut}^r(A)$-modules. 
\end{itemize}
Moreover, for any graded automorphism $\phi$ of $A$, let $\sigma$ be the right 2-cocycle on $\uaut^r(A)$ given by the twisting pair associated to $\phi$ (see (\ref{eq:2cocycleA}) below). Then \[A^{\phi}\cong A_{\sigma}\] as graded algebras. 
\end{thm}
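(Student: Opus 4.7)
My plan is to route all three groups through (3), the characters of $H := \underline{\rm aut}^r(A)$, and then to verify $A^\phi \cong A_\sigma$ by matching the two deformed products on the same underlying vector space.

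First, I would establish (1) $\cong$ (3). Given a character $\chi: H \to \kk$, set $\phi_\chi := (\id_A \otimes \chi) \circ \rho$; this is a graded algebra endomorphism of $A$ with inverse $\phi_{\chi \circ S}$. Conversely, given a graded automorphism $\phi$, write $\phi(x_i) = \sum_j \phi_{ji} x_j$ and, using the presentation $\underline{\rm end}^r(A) \cong A \bullet A^!$ from \Cref{lem:ManinM}(1), define $\chi_\phi$ on the generators $z^k_j$ by $\chi_\phi(z^k_j) := \phi_{kj}$. Since $\phi$ preserves the $m$-homogeneous relations of $A$, the scalars $\phi_{kj}$ satisfy identities which, together with the orthogonality $\langle R^\perp, R\rangle = 0$, force $\chi_\phi$ to annihilate the defining relations \eqref{eq:coactionR} of $A \bullet A^!$; hence $\chi_\phi$ extends to a character of $\underline{\rm end}^r(A)$ and further to $H$ by the universal property of the Hopf envelope. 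A check on generators shows these assignments are mutually inverse, and iterated coassociativity on $\rho$ yields $\phi_\chi \circ \phi_{\chi'} = \phi_{\chi * \chi'}$, so the bijection matches the group operations.

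Second, to establish (2) $\cong$ (3), I would assign to a character $\chi$ the pair
\[
\phi_1 := (\id \otimes \chi) \circ \Delta, \qquad \phi_2 := (\chi \circ S \otimes \id) \circ \Delta,
\]
that is, $\phi_1(h) = \sum h_1 \chi(h_2)$ and $\phi_2(h) = \sum \chi^{-1}(h_1)\, h_2$. Coassociativity of $\Delta$ gives the comodule-map conditions of (\textbf{P1}), while $\varepsilon \circ \phi_1 \circ \phi_2 = \chi * \chi^{-1} = \varepsilon$ gives (\textbf{P2}); each $\phi_i$ is an algebra automorphism since $\chi$ is multiplicative, with inverse supplied by $\chi^{-1}$. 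In the opposite direction, a twisting pair $(\phi_1, \phi_2)$ determines a character $\chi := \varepsilon \circ \phi_1$; applying $\id \otimes \varepsilon$ to (\textbf{P1}) for $\phi_1$ recovers $\phi_1(h) = \sum h_1 \chi(h_2)$, so these assignments are mutually inverse. Component-wise composition of twisting pairs corresponds to convolution of characters under this bijection.

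Finally, I would prove $A^\phi \cong A_\sigma$. By \Cref{TCautA}, $H = \bigoplus_{n \in \mathbb Z} H_n$ is $\mathbb Z$-graded with $\rho(A_n) \subseteq A_n \otimes H_n$ and $\Delta(H_n) \subseteq H_n \otimes H_n$. Writing $\chi := \chi_\phi$, the right 2-cocycle $\sigma$ produced by the twisting pair associated to $\phi$ takes the form
\[
\sigma(x, y) \;=\; \varepsilon(x)\, \chi^{*n}(y), \qquad x \in H_n,\ y \in H,
\]
where $\chi^{*n}$ is the $n$-fold convolution power; one verifies directly that this is a normal right 2-cocycle, using $\Delta(H_n) \subseteq H_n \otimes H_n$ and the fact that convolutions of characters are again characters. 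Iterating coassociativity on $\rho$ gives $\phi^n(b) = \sum b_0\, \chi^{*n}(b_1)$ for every $b \in A$, so for homogeneous $a \in A_n$ and arbitrary $b \in A$,
\[
a \cdot_\phi b \;=\; a\, \phi^n(b) \;=\; \sum \varepsilon(a_1)\, \chi^{*n}(b_1)\, a_0 b_0 \;=\; \sum \sigma(a_1, b_1)\, a_0 b_0 \;=\; a \cdot_\sigma b,
\]
so the identity on the underlying vector space realizes the desired isomorphism of graded algebras. The main obstacle I anticipate is pinning down the precise shape of $\sigma$ from the twisting pair and reconciling it with the $\mathbb Z$-grading on $H$ from \Cref{TCautA}; once the formula above is in hand, the remainder reduces to a single application of iterated coassociativity together with routine bookkeeping in Sweedler notation.
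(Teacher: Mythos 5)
Your proposal is correct, and it arrives at the same endpoint as the paper by a more explicit, character-centric route. The paper first proves (1) $\cong$ (2): from $\phi$ it builds the pair $\bigl(\underline{\rm end}^r(\phi),\ \underline{\rm end}^l((\phi^{-1})^!)\bigr)$ on $\underline{\rm end}^r(A)\cong A\bullet A^!$ via the commutative diagrams \eqref{lem:cend} --- so the second member of the twisting pair is seen through the coaction on the Koszul dual $A^!$ --- then extends to the Hopf envelope and passes to (3) by citing \cite[Proposition 2.1.13, Theorem 2.1.7]{HNUVVW21}. You instead make (3) the hub: you construct (1) $\cong$ (3) by the explicit assignments $\chi\mapsto(\id\otimes\chi)\circ\rho$ and $\phi\mapsto(z^k_j\mapsto\phi_{kj})$ on the generators of $A\bullet A^!$ (the compatibility with the relations \eqref{eq:coactionR} follows, as you say, from $\phi^{\otimes m}(R)\subseteq R$ and $\langle R^\perp,R\rangle=0$), and (2) $\cong$ (3) by writing the twisting pair as winding automorphisms. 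This buys a self-contained argument with concrete formulas at the cost of re-deriving facts the paper imports from \cite{HNUVVW21}; the paper's route keeps the role of $(\phi^{-1})^!$ on $A^!$ visible, which it reuses for the formula for $\sigma^{-1}$ in \eqref{eq:2cocycleA}. For the last assertion your formula $\sigma(x,y)=\varepsilon(x)\,\chi^{*|x|}(y)$ agrees with \eqref{eq:2cocycleA} because $\varepsilon\circ(\underline{\rm aut}^r(\phi))^{n}=\chi^{*n}$, and your closing computation $a\cdot_\sigma b=a\,\phi^{|a|}(b)$ is essentially the paper's. Two points you should write out rather than assert: the right 2-cocycle identity for $\sigma$ (a short Sweedler computation using $\Delta(H_n)\subseteq H_n\otimes H_n$ from \Cref{TCautA}, which the paper outsources to \cite[Proposition 2.3.1]{HNUVVW21}), and the fact that a character of $\underline{\rm aut}^r(A)$ is determined by its restriction to $\underline{\rm end}^r(A)$, which is exactly the uniqueness clause in the universal property of the Hopf envelope and is needed for your (1) $\cong$ (3) bijection to be injective.
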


\begin{proof}
(1) $\Leftrightarrow$ (2): Let $\phi$ be a graded automorphism of $A$. We denote by $\phi^!$ the induced graded automorphism on the $m$-Koszul dual algebra $A^!$. By \Cref{lem:ManinM}(4)-(5), we have a unique pair of graded algebra automorphisms
\begin{align}\label{eq:twistpairB}
\left(\underline{\rm end}^r(\phi),\ \underline{\rm end}^l((\phi^{-1})^!))\right)
\end{align}
of $\underline{\rm end}^r(A)$ making the following diagrams 
\begin{align}\label{lem:cend}
\xymatrix{
A\ar[r]^-{\rho}\ar[d]_-{\phi} & A\otimes \underline{\rm end}^r(A)\ar[d]^-{\id\otimes\underline{\rm end}^r(\phi) }\\
A\ar[r]^-{\rho} & A\otimes \underline{\rm end}^r(A)
}
\quad\quad 
\xymatrix{
A^!\ar[r]^-{\rho^!}\ar[d]_-{(\phi^{-1})^!} &  \underline{\rm end}^r(A)\otimes A^!\ar[d]^-{\underline{\rm end}^l((\phi^{-1})^!)) \otimes \id}\\
A^!\ar[r]^-{\rho^!} &  \underline{\rm end}^r(A)\otimes A^!
}
\end{align} 
commute. By an identical proof of \cite[Lemma 3.2.3]{HNUVVW21}, we know that \eqref{eq:twistpairB} is a twisting pair of $\underline{\rm end}^r(A)$ in terms of \Cref{def:conditions}. 
Since $\underline{\rm aut}^r(A)$ is the Hopf envelope of $\underline{\rm end}^r(A)$, we apply \cite[Proposition 2.1.13]{HNUVVW21} to conclude that the twisting pair \eqref{eq:twistpairB} extends uniquely to that of $\underline{\rm aut}^r(A)$ via the natural bialgebra map $\underline{\rm end}^r(A)\to \underline{\rm aut}^r(A)$. We denote this uniquely extended twisting pair of $\underline{\rm aut}^r(A)$ by
\begin{align}
\label{eq:twistpairA}
\left(\underline{\rm aut}^r(\phi),\ \underline{\rm aut}^l((\phi^{-1})^!))\right)
\end{align}
Moreover, the diagrams 
 \begin{align}\label{lem:caut}
\xymatrix{
A\ar[r]^-{\rho}\ar[d]_-{\phi} & A\otimes \underline{\rm aut}^r(A)\ar[d]^-{\id\otimes\underline{\rm aut}^r(\phi) }\\
A\ar[r]^-{\rho} & A\otimes \underline{\rm aut}^r(A)
}
\quad\quad 
\xymatrix{
A^!\ar[r]^-{\rho^!}\ar[d]_-{(\phi^{-1})^!} &  \underline{\rm aut}^r(A)\otimes A^!\ar[d]^-{\underline{\rm aut}^l((\phi^{-1})^!)) \otimes \id}\\
A^!\ar[r]^-{\rho^!} &  \underline{\rm aut}^r(A)\otimes A^!
}
\end{align}
commute since the universal coactions of $\underline{\rm aut}^r(A)$ on $A$ and $A^!$ factor through $\underline{\rm end}^r(A)$. 

Conversely, suppose $(\phi_1,\phi_2)$ is a twisting pair of $\underline{\rm aut}^r(A)$. By \cite[Lemma 3.2.3]{HNUVVW21}, we define a graded automorphism $\phi$ of $A$ such that
\[
\phi(a)=\sum a_0\varepsilon(\phi_1(a_1))\quad \text{with inverse}\quad \phi^{-1}(a)=\sum a_0\varepsilon(\phi_2(a_1)),
\] for any homogeneous element $a\in A$. Finally, according to \eqref{lem:caut}, it is straightforward to check that the above assignments give a one-to-one correspondence, which indeed respects the group structures. 

\medskip

(2) $\Leftrightarrow$ (3): This follows from \cite[Theorem  2.1.7]{HNUVVW21}. That is, for any twisting pair $(\phi_1,\phi_2)$ of $\underline{\rm aut}^r(A)$, we define a one-dimensional representation of $\underline{\rm aut}^r(A)$ via the algebra map $\varepsilon\circ \phi_1: \underline{\rm aut}^r(A)\to \kk$. Conversely, for any one-dimensional representation  of $\underline{\rm aut}^r(A)$ via an algebra map $\pi: \underline{\rm aut}^r(A)\to \kk$, the corresponding twisting pair $(\phi_1,\phi_2)$ may be viewed as   right and left winding automorphisms given by $\pi$ (see e.g., \cite[\S 2.5]{BZ08}).

Now for the last assertion, we consider the twisting pair \eqref{eq:twistpairA} associated to $\phi$ satisfying commutative diagrams \eqref{lem:caut}. By \cite[Proposition 2.3.1]{HNUVVW21}, we have a right 2-cocycle $\sigma$ on $\underline{\rm aut}^r(A)$ with convolution inverse $\sigma^{-1}$ via
\begin{align}
\label{eq:2cocycleA}
\sigma(x,y)=\varepsilon(x)\, \varepsilon((\underline{\rm aut}^r(\phi))^{|x|}(y)) \qquad {\rm and} \qquad \sigma^{-1}(x,y)=\varepsilon(x)\, \varepsilon((\underline{\rm aut}^l(\phi^{-1})^{!})^{|x|}(y)),
\end{align}
for any homogeneous elements $x,y\in \underline{\rm aut}^r(A)$. 
Since $A$ is a comodule algebra over $\underline{\rm aut}^r(A)$, the twist $A_{\sigma}$ is a comodule algebra over $\underline{\rm aut}^r(A)^\sigma$. One can see that $A^\phi=A_{\sigma}=A$ as graded vector spaces, and so it suffices to show that $A^\phi\cong A_{\sigma}$ as graded algebras. For any two homogeneous elements $a,b \in A$, we have
\begin{align*}
    a \cdot_{\sigma} b&=\, \sum a_0b_0\, \sigma(a_1, b_1) \\
    \overset{\eqref{eq:2cocycleA}}&{=}\, \sum a_0b_0\,\varepsilon(a_1)\varepsilon(\underline{\rm aut}^r(\phi)^{|a_1|}(b_1))\\
    &=\,\sum a\, b_0\,\varepsilon(\underline{\rm aut}^r(\phi)^{|a|}(b_1))\\
    &=\,a\,(\id \otimes \varepsilon)\circ\left ((\id \otimes \underline{\rm aut}^r(\phi)^{|a|})\circ \rho\right)(b)\\
     \overset{\eqref{lem:caut}}&{=}a\,(\id \otimes \varepsilon)\circ\left(\rho \circ \phi^{|a|}\right)(b)\\
     &=\,a\,\phi^{|a|}(b)\\
    & =\,a \cdot_\phi b,
\end{align*} 
where the second to last equality follows from the counit axiom of the comodule structure $\rho$. Therefore, the $\kk$-linear identity map $\id$ preserves the multiplication and so $A^{\phi}\cong A_{\sigma}$ as desired.
\end{proof}

This is a comodule algebra analogue of \cite[Theorem 3.2.5 and Theorem 3.2.6]{HNUVVW21} that a Zhang twist of $A$ can be realized as a 2-cocycle twist of $A$.

\begin{Cor}
Let $A$ and $B$ be two connected graded algebras finitely generated in degree one, both subject to $m$-homogeneous relations. If the categories of modules over $\underline{\rm aut}^r(A)$ and over $\underline{\rm aut}^r(B)$ are monoidally equivalent, then the groups of graded automorphisms of $A$ and of $B$ are isomorphic. 
\end{Cor}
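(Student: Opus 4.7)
The strategy is to combine the preceding \Cref{lem:2cocycleGrade} with the elementary fact that any monoidal equivalence induces an isomorphism between the groups of isomorphism classes of invertible objects (i.e.\ Picard groups) of the two monoidal categories involved. By part (3) of that theorem, the group of graded automorphisms of $A$ is naturally isomorphic to the group of one-dimensional $\underline{\rm aut}^r(A)$-modules under tensor product, and the analogous statement holds for $B$ in terms of $\underline{\rm aut}^r(B)$. It therefore suffices to show that any monoidal equivalence $F\colon \operatorname{Mod}(\underline{\rm aut}^r(A)) \xrightarrow{\otimes} \operatorname{Mod}(\underline{\rm aut}^r(B))$ restricts to a group isomorphism between the corresponding subgroups of one-dimensional objects under tensor product.

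The first step is to characterize one-dimensional modules intrinsically as the invertible objects in the module category of a Hopf algebra $H$. If $V$ is a one-dimensional $H$-module, then $V \otimes V^* \cong \kk$ in $\operatorname{Mod}(H)$, so $V$ is invertible. Conversely, if $U$ and $W$ are $H$-modules with $U \otimes W \cong \kk$, then comparing $\kk$-vector space dimensions as cardinals forces $\dim_\kk U = \dim_\kk W = 1$, so $U$ is necessarily one-dimensional. Hence the group of one-dimensional $H$-modules under tensor product coincides with the Picard group of $\operatorname{Mod}(H)$, a categorical invariant of the monoidal structure.

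Now I would invoke the monoidal equivalence $F$: since $F$ comes equipped with natural isomorphisms $F(\kk) \cong \kk$ and $F(V_1 \otimes V_2) \cong F(V_1) \otimes F(V_2)$, it carries invertible objects to invertible objects, and the induced map on isomorphism classes of invertible objects is multiplicative with respect to tensor product. Together with a quasi-inverse of $F$, this yields the desired group isomorphism between the two Picard groups, which after combining with \Cref{lem:2cocycleGrade} applied to both $A$ and $B$ produces the isomorphism of graded automorphism groups.

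I do not anticipate any substantive obstacle: the only nontrivial input is the previous theorem, and the remainder is the essentially formal observation that the Picard subcategory of invertible objects is a monoidal invariant, coupled with the dimension-counting argument identifying invertible $H$-modules with one-dimensional $H$-modules.
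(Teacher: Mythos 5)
Your proof is correct and takes essentially the same route as the paper: the paper's own proof is a one-line citation of the bijection between graded automorphisms of $A$ and one-dimensional representations of $\underline{\rm aut}^r(A)$ (parts (1) and (3) of the preceding theorem), leaving implicit the standard fact that a monoidal equivalence induces a group isomorphism on isomorphism classes of invertible objects, which for Hopf algebra module categories are exactly the one-dimensional modules. Your Picard-group argument simply supplies that implicit step in full detail.
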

\begin{proof}
    It is a consequence of the bijection between (1) and (3) stated in \Cref{lem:2cocycleGrade}.
\end{proof}

\section{Homological regularities under Morita--Takeuchi equivalence}
\label{sec:properties}

In this section, we study various homological properties of connected graded algebras that remain invariant under Morita--Takeuchi equivalence. We say that two Hopf algebras $H$ and $K$ are \emph{Morita--Takeuchi equivalent} if there is some monoidal equivalence described by $(F,\xi)$, where   \[F: {\rm comod}(H)\stackrel{\otimes}{\cong}{\rm comod}(K)
\]
is an equivalence of categories, together with a natural isomorphism 
\begin{align*}\label{eq:monoidaldata}
 \xi^{-1}_{V,W}: F(V)\otimes F(W)\cong F(V\otimes W),
 \end{align*}
for any two $H$-comodules $V$ and $W$ satisfying some compatible conditions in \cite[Definition 2.4.1]{EGNO}. Following the proof of \cite[Proposition 1.16]{Bichon2014}, $(F,\xi^{-1})$ restricts to a monoidal equivalence between ${\rm comod}_{\rm fd}(H)$ and  ${\rm comod}_{\rm fd}(K)$. In particular, $F$ sends one-dimensional $H$-comodules to one-dimensional $K$-comodules. 

Moreover, according to \cite{Sch1996} Morita--Takeuchi equivalence between $H$ and $K$ can be described in terms of bi-Galois objects (see \Cref{defn:biGalois}). As a result, any monoidal equivalence $(F,\xi^{-1})$ can be explicitly given by the cotensor product 
\begin{equation}\label{eq:MTE} 
\begin{aligned}
F: {\rm comod}(H)~&\stackrel{\otimes}{\cong}~{\rm comod}(K) \\
M~&\mapsto~ \widetilde{M}:= M\square_HT 
\end{aligned}
\end{equation}
for some $H$-$K$-bi-Galois object $T$, together with the natural isomorphism (as in \cite[Proposition 1.4]{Ulbrich1987} for left comodules)
\begin{align*}
\xi^{-1}_{V,W}: (V\square_HT)\otimes (W\square_HT)~&\cong~ (V\otimes W)\square_HT\\
\left(\sum v_i\otimes a_i\right)\otimes \left(\sum w_j\otimes b_j\right)~&\mapsto~ \sum v_i\otimes w_j\otimes a_ib_j.
\end{align*}
 The cotensor product is given by $M\square_HT:=\{x\in M\otimes T\,|\, (\rho_M\otimes \id_T)(x)=(\id_M \otimes \rho_T)(x)\}$, where $\rho_M: M\to M\otimes H$ and $\rho_T: T\to H\otimes T$ are the corresponding $H$-comodule structure maps on $M$ and $T$, respectively. Here, the right coaction of $K$ on $\widetilde{M}:= M\square_HT$ comes from that of $K$ on $T$, since $T$ is an $H$-$K$-bi-Galois object. Note that we can also view $T$ as a right $H$-comodule using the antipode of $H$. In this case, $\widetilde{M}=M\square_HT=(M\otimes T)^{\co H}$, which we will use later.

Throughout this section, let $A=\bigoplus_{i\ge 0}A_i$ be a connected graded right $H$-comodule algebra. It is clear that $\widetilde{A}=\bigoplus_{i\ge 0}\widetilde{A_i}$ is a connected graded right $K$-comodule algebra since $\widetilde{(-)}$ is a monoidal functor. We will show that $A$ and $\widetilde{A}$ share some similar homological properties. In particular, we show that the numerical notions of Tor-regularity and Castelnuovo--Mumford regularity (CM-regularity), as well as AS-regularity are Morita--Takeuchi invariant in \Cref{subsec:numerical regularity}.

\subsection{The category of graded relative $(A,H)$-modules}
\label{subsec:relative modules}

In this subsection, we review the definition of a graded relative $(A,H)$-module and prove some basic properties, which are essential in various homological computations related to $A$.  

A vector space $M$ is called a \emph{left graded relative $(A,H)$-module} if $M=\bigoplus_{i\in \mathbb Z} M_i$ is a graded left $A$-module and each homogeneous piece $M_i$ is a right $H$-comodule such that 
\begin{equation}
\label{eq:relative}
\sum (ax)_0\otimes (ax)_1=\sum a_0x_0\otimes a_1x_1,
\end{equation}
for all homogeneous $a\in A$ and $x\in M$. A \emph{right graded relative $(A,H)$-module} refers to a simultaneously graded right $A$-module and right $H$-comodule with the $H$-coaction preserving the grading, which satisfies $\sum (xa)_0\otimes (xa)_1=\sum x_0a_0\otimes x_1a_1$. In both cases, we use the right $H$-comodule structure, with respective left or right $A$-module structure. In what follows, by graded relative $(A,H)$-modules, we mean left graded relative modules, unless stated otherwise.

We call a left graded relative $(A,H)$-module $M$ \emph{(finite) free} if $M \cong A \otimes V$ for some (finite-dimensional) graded right $H$-comodule $V$, with $A$-action given by multiplication on the left tensorand, and with $H$-coaction coming from the standard definition by tensor product of $H$-comodules. Note that any free left graded relative $(A,H)$-module is a free graded left $A$-module when we disregard its $H$-comodule structure. Free right graded relative $(A,H)$-modules can be similarly defined. 

The category of all left graded relative $(A,H)$-modules is denoted by $\,_{\Gr(A)}\mathcal M^H$ consisting of all left graded relative $(A,H)$-modules together with degree-preserving $A$-linear and $H$-colinear maps. Similarly, we denote the category of all right graded relative $(A,H)$-modules by $\mathcal M^H_{\Gr(A)}$.

\begin{lemma}
\label{free-res-rel}
Let $A$ be a connected graded right $H$-comodule algebra. Then every graded relative $(A,H)$-module $M$ has a resolution consisting of free graded relative $(A,H)$-modules. In particular, if $A$ is noetherian and $M$ is finitely generated as an $A$-module, then the resolution of $M$ can be chosen to consist of finite free graded relative $(A,H)$-modules.
\end{lemma}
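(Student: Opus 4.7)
The plan is to build the resolution inductively by producing, for each graded relative $(A,H)$-module $M$, a surjection from a (finite) free graded relative $(A,H)$-module onto $M$, and then taking kernels. The natural candidate for the free cover is $A \otimes M$, where we temporarily forget the $A$-module structure on $M$ and view it only as a graded $H$-comodule, then endow $A \otimes M$ with the left $A$-action on the first tensorand and the tensor product right $H$-coaction $\rho(a \otimes m) = \sum a_0 \otimes m_0 \otimes a_1 m_1$. This makes $A \otimes M$ a free graded relative $(A,H)$-module in the sense defined just above the lemma. The multiplication map $\mu : A \otimes M \to M$, $a \otimes m \mapsto am$, is clearly $A$-linear, grading-preserving, and surjective (since $M$ is unital), and it is $H$-colinear precisely because of the relative module axiom \eqref{eq:relative}.

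Next I would check that the category $\,_{\Gr(A)}\mathcal{M}^H$ is closed under kernels, so that the process can be iterated. Given a morphism $f: N \to N'$ in $\,_{\Gr(A)}\mathcal{M}^H$, the kernel $K = \ker f$ is automatically a graded $A$-submodule of $N$, and it is an $H$-subcomodule because $H$ is flat over $\kk$ (so comodule kernels exist and are computed in $\kk$-vector spaces). The relative module condition is inherited from $N$. Applying the free-cover construction to $K = \ker \mu$, then to its kernel, and so on, produces the desired free resolution.

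For the noetherian, finitely generated case, the key input is the fundamental theorem of comodules: every element of an $H$-comodule lies in a finite-dimensional $H$-subcomodule. Hence if $M$ is generated as a graded $A$-module by finitely many homogeneous elements $m_1,\dots,m_k$, I can find a finite-dimensional graded $H$-subcomodule $V \subseteq M$ containing each $m_i$, and then the restricted map $A \otimes V \twoheadrightarrow M$ is still surjective. Choosing a homogeneous $\kk$-basis of $V$ exhibits $A \otimes V$ as a finite direct sum of degree-shifted copies of $A$ when its $H$-comodule structure is forgotten, so it is noetherian as a graded $A$-module. Its kernel is therefore finitely generated as an $A$-module, and we can repeat the argument to obtain a resolution by finite free graded relative $(A,H)$-modules.

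The only subtle step is the compatibility check that $\mu$ is $H$-colinear and that all inherited structures on kernels satisfy \eqref{eq:relative}; beyond that, the argument is a straightforward adaptation of the standard construction of free resolutions in categories of Hopf comodule algebras, so I do not anticipate serious obstacles.
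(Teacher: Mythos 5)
Your proposal is correct and follows essentially the same route as the paper: take the $A$-module structure map $A\otimes M\twoheadrightarrow M$ (resp.\ $A\otimes V\twoheadrightarrow M$ for a finite-dimensional graded subcomodule $V$ containing the generators, via the fundamental theorem of comodules) as the free cover, and iterate on kernels. The extra verifications you flag (colinearity of $\mu$ via the relative-module axiom, closure of the category under kernels) are exactly the routine checks the paper leaves implicit.
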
 

\begin{proof}
It suffices to show that for every graded relative $(A,H)$-module $M$, there are some graded $H$-comodule $V$ and some surjective graded relative $(A,H)$-module map from $A\otimes V$ to $M$. Then the desired resolution can be constructed from $A \otimes V 	\twoheadrightarrow M$. Specifically, we can take $V=M$ and the surjection $A\otimes M \twoheadrightarrow M$ to be the $A$-module structure map on $M$, which is both graded $A$-linear and $H$-colinear.  

Suppose $A$ is noetherian and $M$ is finitely generated over $A$. Then $M$ has a finite set $W$ of homogeneous generators over $A$. By the fundamental theorem of comodules, $W$ is contained in some finite-dimensional graded subcomodule $V$ of $M$. Then we can take the surjection $A\otimes V\to M$ induced again by the $A$-module structure map on $M$. 
\end{proof}

\begin{remark}
\label{rem:trivial}
When $A$ is connected graded, we can view $A$ as a left graded relative $(A,H)$-module with left $A$-module action given by the multiplication in $A$. It is clear that $A$ has graded relative $(A,H)$-submodules $A_{\ge n}$ and quotient modules $A/A_{\ge n}$, for every $n\ge 1$. In particular, if $A$ is noetherian, all the above relative $(A,H)$-modules are finitely generated over $A$. 
\end{remark}

Let $M=\bigoplus_{i\in \mathbb Z}M_i$ and $N=\bigoplus_{i\in \mathbb Z}N_i$ be two graded relative $(A,H)$-modules. Considering $M$ and $N$ as graded modules over $A$, we use $\Hom_A^d(M,N)$ to denote the set of all $A$-linear homogeneous homomorphisms $f: M\to N$ of degree $d$ such that $f(M_i)\subseteq N_{i+d}$ for any $i$. We set $\underline{\Hom}_A(M,N):=\bigoplus_{d\in \mathbb Z}\Hom_A^d(M,N)$, and we denote the corresponding derived functors by $\underline{\Ext}_A^\bullet(M,N)$. If $M$ has a resolution in $\,_{\Gr(A)}{\mathcal M}^H$ consisting of finite free graded relative $(A,H)$-modules, then $\underline{\Ext}_A^\bullet (M,N)=\Ext_A^\bullet(M,N)$, but this is not true in general. 
Moreover, we use $\underline{\Hom}^H(M,N)$ and   $\underline{\Hom}_A^H(M,N)$ to denote the spaces of all homogeneous $H$-colinear and both $H$-colinear and $A$-linear maps from $M$ to $N$ of arbitrary degree, respectively. 

Suppose $H$ has a bijective antipode $S$. We set the degree of $N_i\otimes H$ to be that of $N_i$, for $i\in \mathbb Z$. Consider the graded map $\rho: \underline{\Hom}_\kk(M,N)\to  \underline{\Hom}_\kk(M,N\otimes H)$ given by
\begin{align}\label{eq:coactionHom}
\rho(f)(m)=\sum f(m_0)_0\otimes S^{-1}(m_1)f(m_0)_1
\end{align}
for any homogeneous map $f\in \underline{\Hom}_\kk(M,N)$ and any homogeneous $m\in M$. Since $\underline{\Hom}_\kk(M,N)\otimes H\subset \underline{\Hom}_\kk(M,N\otimes H)$, we introduce 
\[
\underline{\HOM}_\kk(M,N):=\{f\in \underline{\Hom}_\kk(M,N)\,|\, \rho(f)\in \underline{\Hom}_\kk(M,N)\otimes H\}. 
\]
It is well-known (see e.g., \cite{Stefan99,Ul90}) that $\rho$ gives $\underline{\HOM}_\kk(M,N)$ a graded right $H$-comodule structure which is the largest graded $H$-comodule contained in $\underline{\Hom}_\kk(M,N)$.

We summarize some results from \cite{CG05} below, which we will employ to compute certain homology groups of relative $(A,H)$-modules. 

\begin{lemma}
\label{lem:relHom}
Let $A$ be a connected graded right $H$-comodule algebra and let $M$ and $N$ be two left graded relative $(A,H)$-modules. Suppose $H$ has a bijective antipode and $M$ is finitely generated over $A$. Then we have the following:
\begin{itemize}
    \item[(1)] $\underline{\Hom}_A(M,N)$ is a graded right $H$-subcomodule of $\underline{\HOM}_\kk(M,N)$.
    \vspace{0.05in} 
    \item[(2)] $\underline{\Hom}_A^H(M,N)\cong \underline{\Hom}_A(M,N)^{\co H}$ as vector spaces.
    \item[(3)] Suppose $N$ is also finitely generated over $A$. Then the natural composition map 
    \[\underline{\Hom}_A(N,P)\otimes \underline{\Hom}_A(M,N)\to \underline{\Hom}_A(M,P)\] is graded $H^{\op}$-colinear for any graded relative $(A,H)$-module $P$. Here, $H^{\op}=H$ as a vector space, with opposite multiplication.
    \item[(4)] Suppose $M$ can be presented by finite free graded relative $(A,H)$-modules. Then for any graded right $H$-comodule $R$
    \[\underline{\Hom}_A(M,N\otimes R)\cong \underline{\Hom}_A(M,N)\otimes R\]
    as graded right $H$-comodules, where the $A$-module structure on $N\otimes R$ is induced by that on $N$.
    \end{itemize}
\end{lemma}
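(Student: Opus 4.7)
The plan is to verify each part by direct Sweedler-calculus computation, starting from the explicit coaction formula \eqref{eq:coactionHom}, the relative module axiom \eqref{eq:relative}, coassociativity of the comodule structures, and the two antipode identities
\[\sum S^{-1}(h_{(2)})\, h_{(1)} = \varepsilon(h)\, 1_H = \sum h_{(2)}\, S^{-1}(h_{(1)}),\]
obtained by applying $S^{-1}$ to the standard axioms $\sum S(h_{(1)})h_{(2)} = \sum h_{(1)} S(h_{(2)}) = \varepsilon(h)\, 1_H$.

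For part (1), I would first show that if $f \in \underline{\Hom}_A(M,N)$, then the prospective coaction $\rho(f)\colon M \to N \otimes H$ is itself $A$-linear in the sense that $\rho(f)(am) = (a \otimes 1)\cdot \rho(f)(m)$. This computation expands $\rho(f)(am)$ via the relative module axiom on $M$, uses $A$-linearity of $f$ to convert $f(a_0 m_0)$ into $a_0 f(m_0)$, applies the relative module axiom on $N$, and collapses the extra $a$-indices by coassociativity combined with the antipode identity above. To then land in $\underline{\Hom}_A(M,N) \otimes H$ rather than a larger space of maps $M \to N \otimes H$, I invoke the finite generation hypothesis: choose a finite-dimensional generating graded subcomodule $V \subseteq M$, and a finite-dimensional subcomodule $W \subseteq N$ containing $f(V)$. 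Then $\rho(f)|_V$ factors through $W \otimes H_0$ for some finite-dimensional $H_0 \subseteq H$, yielding a finite decomposition $\rho(f)|_V = \sum_i g_i \otimes h_i$ with the $h_i$ linearly independent. Each $g_i$ extends uniquely to an $A$-linear map $\tilde g_i\colon M \to N$, since the $A$-linearity of $\rho(f)$ together with linear independence of the $\{h_i\}$ forces each component to respect any $A$-relation among the generators. The coaction axioms follow by routine verification from the defining formula.

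For part (2), assuming $\rho_N \circ f = (f \otimes \id) \circ \rho_M$, I substitute this identity and its iterate into the formula for $\rho(f)(m)$ and use coassociativity with the antipode identity above to collapse the expression to $f(m) \otimes 1$, so that $f \in \underline{\Hom}_A(M,N)^{\co H}$. The converse runs the same computation in reverse, or follows from the general fact that $\underline{\Hom}^H$ coincides with the coinvariant part of the conjugation coaction on $\underline{\HOM}_\kk$. For part (3), writing $\rho(g) = \sum g_i \otimes h_i$ and $\rho(f) = \sum f_j \otimes k_j$, I unfold $\rho(g\circ f)(m)$ from the definition, use the relation $\rho_P(g(n)) = \sum g_i(n_0)\otimes n_1 h_i$ (a short Sweedler consequence of the defining formula for $\rho(g)$), substitute the analogous relation for $\rho_N \circ f$, and simplify with coassociativity and the antipode identity to obtain
\[\rho(g\circ f) = \sum (g_i \circ f_j) \otimes k_j h_i.\]
The standard tensor coaction on $\underline{\Hom}_A(N,P) \otimes \underline{\Hom}_A(M,N)$ produces $h_i k_j$ in the $H$-slot, whereas composition produces $k_j h_i$; this reversal of multiplication is precisely the $H^{\op}$ structure.

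For part (4), I would first handle the case where $M = A \otimes V$ is a finite free graded relative $(A,H)$-module with $V$ finite-dimensional. The standard adjunction $\underline{\Hom}_A(A \otimes V, X) \cong \underline{\Hom}_\kk(V, X)$ in $\,_{\Gr(A)}\mathcal M^H$ reduces matters to the tautological isomorphism $\underline{\Hom}_\kk(V, N \otimes R) \cong \underline{\Hom}_\kk(V, N) \otimes R$, which is canonical by finite-dimensionality of $V$, and the $H$-coactions agree by direct inspection of \eqref{eq:coactionHom}. For general $M$ with a finite free presentation $P_1 \to P_0 \to M \to 0$, applying $\underline{\Hom}_A(-, N \otimes R)$ and $\underline{\Hom}_A(-, N) \otimes R$ yields two left-exact sequences that agree in positions $P_0$ and $P_1$ by the free case; since $-\otimes R$ is exact over $\kk$, both sequences are left-exact, and the five lemma delivers the desired isomorphism in position $M$, compatibly with $H$-coactions. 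The principal obstacle throughout is the disciplined bookkeeping of Sweedler indices across nested coactions and accumulated $H$-tags; in part (3) the reversal $k_j h_i$ versus $h_i k_j$ that forces the $H^{\op}$-structure is easy to miscompute, while in part (1) the existence of the $A$-linear extensions $\tilde g_i$ depends essentially on the finite generation hypothesis on $M$.
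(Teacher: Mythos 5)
Your proposal is correct, and at the structural level it mirrors the paper's argument: the conjugation coaction \eqref{eq:coactionHom}, the antipode identities for $S^{-1}$, finite generation of $M$ to force rationality in part (1), the multiplication reversal giving $H^{\op}$-colinearity in part (3), and a finite free presentation plus the five lemma in part (4). The difference is in how the technical core is discharged. The paper outsources it to Caenepeel--Gu\'ed\'enon: parts (1) and (2) are cited as graded analogues of \cite[Lemma 2.2]{CG05}, and parts (3) and (4) lean on \cite[Proposition 1.5]{CG05}, which factors every rational map through a finite-dimensional comodule as $f(n)=F(n\otimes w)$ with $F$ colinear; the explicit coaction formulas $\rho(f)=\sum F(-\otimes w_0)\otimes w_1$ then make the computations in (3) and (4) nearly formal. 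You instead re-derive everything by direct Sweedler calculus: for (1) you prove $A$-linearity of $\rho(f)$ and build the components $\tilde g_i$ by hand from a finite generating subcomodule (your well-definedness argument via linear independence of the $h_i$ is the right one), and for (3) you compute $\rho(g\circ f)=\sum(g_i\circ f_j)\otimes k_jh_i$ directly from the identity $\rho_P(g(n))=\sum g_i(n_0)\otimes n_1h_i$, which is a correct consequence of the defining formula. Your route is more self-contained but index-heavy; the paper's is shorter at the cost of importing the structural lemma. One presentational point in (4): to invoke the five lemma you need the comparison map $\Psi_M(f\otimes r)(m)=f(m)\otimes r$ defined and natural for \emph{all} $M$ in advance (as the paper does), not only for free modules -- your adjunction argument for the free case is fine, but the general map should be written down explicitly so the diagram commutes by naturality; you should also note, as the paper does, that the rows of that diagram are colinear by part (3).
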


\begin{proof}
(1) and (2): The results are analogues of \cite[Lemma 2.2]{CG05} in the graded case.

(3): By (1), all the $\underline{\Hom}$-spaces in the statement are graded right $H$-comodules. Let $f\in \underline{\Hom}_A(N,P)$ and $g\in \underline{\Hom}_A(M,N)$ be two homogeneous $A$-linear maps. We follow the proof of \cite[Corollary 1.6]{CG05} to write the composition $f\circ g$ as below. By \cite[Proposition 1.5]{CG05}, there exist two finite-dimensional graded $H$-comodules $V$ and $W$, two homogeneous elements $v\in V$, $w\in W$, and two graded $H$-colinear maps $F: N\otimes W\to P$, $G: M\otimes V\to N$ such that $f(n)=F(n\otimes w)$ and $g(m)=G(m\otimes v)$ for all homogeneous $m\in M$ and $n\in N$. As a consequence, $(f\circ g)(m)=F(G(m\otimes v)\otimes w)$. Moreover, the proof of \cite[Proposition 1.5]{CG05} shows that the $H$-comodule structures on $f$, $g$ and $f\circ g$ are explicitly given by
\begin{align*}
\rho(f)&=\sum F(-\otimes w_0)\otimes w_1\in \underline{\Hom}_A(N,P)\otimes H^{\op}\\
\rho(g)&=\sum G(-\otimes v_0)\otimes v_1\in \underline{\Hom}_A(M,N)\otimes H^{\op}\\
\rho(f\circ g)&=\sum F(G(-\otimes v_0)\otimes w_0)\otimes v_1w_1\in \underline{\Hom}_A(M,P)\otimes H^{\op}.
\end{align*}
It is then straightforward to show that $\rho(f\circ g)=\rho(f)\rho(g)$ using the fact that the multiplication in $H^{\op}$ is the opposite of the multiplication of $H$. Hence, the result follows. 

(4): We claim that the graded map $\Psi_M: \underline{\Hom}_A(M,N)\otimes R \to \underline{\Hom}_A(M,N\otimes R)$ given by $\Psi_M(f\otimes r)(m)=f(m)\otimes r$, for any homogeneous $f\in \underline{\Hom}_A(M,N)$, $r\in R$ and $m\in M$, is $H$-colinear and natural in $M$. By \cite[Proposition 1.5]{CG05}, there exist a finite-dimensional graded $H$-comodule $V$, a homogeneous element $v\in V$, and a graded $H$-colinear map $F: M\otimes V\to N$ such that $f(m)=F(m\otimes v)$. It follows that  $\Psi_M(f\otimes r)=\Psi_M(F(-\otimes v)\otimes r)=F(-\otimes v)\otimes r$. By the fundamental theorem of comodules, $r$ is contained in some finite-dimensional graded $H$-subcomodule $W$ of $R$. Denote by $\widetilde{F}=F\otimes {\iota}: M\otimes V\otimes W\to N\otimes R$ the graded $H$-colinear map, where $\iota: W\to R$ is the natural embedding. Hence $\Psi_M(f\otimes r)=\widetilde{F}(-\otimes v\otimes r)$. Note that the $H$-comodule structure on $f$ is given by $\rho(f)=\sum F(-\otimes v_0)\otimes v_1$ and the $H$-comodule structure on $\Psi_M(f\otimes r)$ is given by $\rho(\Psi_M(f\otimes r))=\sum \widetilde{F}(-\otimes v_0\otimes r_0)\otimes v_1r_1$. It is straightforward to check that $\Psi_M$ is $H$-colinear. Moreover, $\Psi_M$ is natural in $M$ by its explicit construction.  

Now let $P_1\to P_0\to M\to 0$
be a presentation for $M$ consisting of finite free graded relative $(A,H)$-modules. We apply the functors $\underline{\Hom}_A(-,N)\otimes R$ and $\underline{\Hom}_A(-,N\otimes R)$ to the above presentation and use $\Psi$. By the discussion above, we have the following commutative diagram 
\[
\xymatrix{
0\ar[r] & \underline{\Hom}_A(M,N)\otimes R \ar[r]\ar[d]^-{\Psi_{M}}& \underline{\Hom}_A(P_0,N)\otimes R \ar[r]\ar[d]^-{\Psi_{P_0}} & \underline{\Hom}_A(P_1,N)\otimes R \ar[d]^-{\Psi_{P_1}}\\
0\ar[r] & \underline{\Hom}_A(M,N\otimes R)\ar[r] & \underline{\Hom}_A(P_0,N\otimes R)\ar[r] & \underline{\Hom}_A(P_1,N\otimes R) 
}
\]
in the category of $H$-comodules with exact rows.  By part (3), the maps in the rows are graded $H^{\op}$-colinear. Here, $\Psi_{P_0},\Psi_{P_1}$ are isomorphisms since $P_0$ and $P_1$ are finite free $A$-modules. By the 5-lemma, $\Psi_M$ is also an isomorphism. This concludes the proof. 
\end{proof}

\begin{remark}
    Suppose $M$ and $N$ are right graded relative $(A, H)$-modules and \eqref{eq:coactionHom} is replaced by 
    \[\rho(f)(m)=\sum f(m_0)_0\otimes f(m_0)_1S(m_1).\] Then by \cite[Lemma 2.2]{Ul90} a right graded relative module version of \Cref{lem:relHom} holds without the hypothesis that the antipode $S$ is bijective. 
\end{remark}

\subsection{Numerical regularity}
\label{subsec:numerical regularity}

In this subsection, we study the homological invariants of Tor-regularity (Torreg) and Castelnuovo--Mumford regularity (CMreg). CMreg is a classical notion in algebraic geometry and in commutative algebra that measures when the associated Hilbert function becomes a polynomial. In \cite{Romer}, R\"{o}mer expands this notion to a numerical measure of CM-regularity and $\Tor$-regularity. These were extended to the noncommutative setting by J\o rgensen \cite{Jorgensen1999, Jorgensen2004}, and Dong and Wu \cite{DongWu2009}. For a graded algebra $A$, $\mathrm{Torreg}({\kk})=0$ if and only if $A$ is Koszul. Hence, one can think of $\mathrm{Tor}$-regularity as a measure of Koszulity of an algebra (e.g., see \cite[Example 2.4(4) and Lemma 5.6]{Kirkman-Won-Zhang2021}). In \cite{Kirkman-Won-Zhang2021}, Kirkman, Won, and Zhang introduced numerical AS-regularity (ASreg) as a measure of AS-regularity of a noncommutative algebra \cite{Kirkman-Won-Zhang2021}. In the case that $A$ is a noetherian connected graded algebra with balanced dualizing complex, the numerical AS-regularity is zero if and only if $A$ is AS-regular in the classical sense \cite[Theorem 0.8]{Kirkman-Won-Zhang2021}.
We show that Torreg and CMreg are Morita--Takeuchi invariant. As a consequence, (numerical) ASreg is invariant under Morita--Takeuchi equivalence. 

AS-regular algebras are viewed as noncommutative analogues of polynomial rings. Below, we review the classical definition of AS-regularity. 

\begin{defn}[{\cite{ArtinSchelter1987} and \cite[Definition 2.11]{ATV1990}}]
\label{defn:AS-regular}
A connected graded algebra $A$ is called \emph{Artin--Schelter regular} (or AS-regular) of dimension $d$ if the following conditions hold.
\begin{itemize}
    \item[(AS1)] $A$ has finite global dimension $d$, and
    \item[(AS2)] $A$ is {\it Gorenstein}, that is, for some integer $l$,
\[
\underline{\Ext}_A^i(\kk,A)=\begin{cases}
\kk(l) & \text{if}\ i=d\\
0 & \text{if}\ i\neq d,
\end{cases}
\]
where $\kk$ is the trivial module $A/A_{\ge 1}$. We call the above integer $l$ the \emph{AS index of $A$}.
\end{itemize} 
\end{defn}

\begin{remark}
We note that \Cref{defn:AS-regular} as given in \cite{ArtinSchelter1987,ATV1990} also assumes that $A$ has polynomial growth or equivalently, that $A$ has finite Gelfand--Kirillov (GK) dimension. However, polynomial growth is not invariant under the Morita--Takeuchi equivalence. Hence, we follow the conventions of e.g. \cite{Mori-Smith2016, vdb2017, Zhang1998} to explore a larger family of noncommutative algebras.
\end{remark}

In the following, we give the definition of various measures of regularity. Suppose $A$ is a graded algebra, $M$ a graded right $A$-module, and $N$ a graded left $A$-module. For each $i$, the group $\Tor_i^A(M,N)$ has a graded structure, which we denote by $\underline{\Tor}_i^A(M,N)$. 

\begin{defn}[{\cite{DongWu2009, Jorgensen1999, Jorgensen2004,Kirkman-Won-Zhang2021}}] \label{def:Torreg}
Let $A$ be a connected graded algebra with graded Jacobson radical $\mathfrak m:=A_{\ge 1}$. Let $M$ be a finitely generated graded left $A$-module.
\begin{itemize}
    \item[(1)] The \emph{Tor-regularity} of $M$ is 
    \[ \mathrm{Torreg}(M) = \sup_{i,j \in \mathbb Z} \left\{ j-i : \underline{\mathrm{Tor}}^A_i(\kk,M)_j \neq 0 \right\}. \]
    \item[(2)] The \emph{Castelnuovo--Mumford regularity (or CM-regularity)} of $M$ is 
    \[ \mathrm{CMreg}(M) = \sup_{i,j \in \mathbb Z} \left\{ j+i : H_\mathfrak m^i(M)_j \neq 0 \right\},\]
    where $H_\mathfrak m^i(M) = \lim\limits_{n\to \infty}\underline{\Ext}_A^i(A/\mathfrak m^n, M)$ is the \emph{$i$th local cohomology} of $M$.
    \item[(3)] The {(numerical) \it Artin--Schelter regularity (or AS-regularity)} of $A$ is
    \[ {\rm ASreg(A)}={\rm Torreg}(\kk)+{\rm CMreg}(A). \]
     \item[(4)] We call $M$ \emph{$s$-Cohen--Macaulay} or simply \emph{Cohen--Macaulay} if $H_\mathfrak m^i(M)=0$ for all $i\neq s$ and $H_\mathfrak m^s(M)\neq 0$. We say $A$ is \emph{Cohen--Macaulay} if $\!_AA$ is Cohen--Macaulay.
    \end{itemize}
\end{defn}

\begin{remark}
\label{noeth-homol-reg}
\begin{itemize}
\item[(a)] The definitions of homological regularities given in \cite{Kirkman-Won-Zhang2021} all have the additional assumption that $A$ is noetherian. Since general Morita--Takeuchi equivalence does not preserve noetherianity, for our results we do not require the noetherian assumption.

\item[(b)] We use \cite[Definition 1.2]{Kirkman-Won-Zhang2021} for the above notion of $s$-Cohen--Macaulay and note that there are several notions of Cohen--Macaulay in the literature (e.g., \cite[Theorem 4.8 (i)-(iii)]{BrownMacleod} and \cite[Definition 5.8]{Le}).

\item[(c)] The original definitions of $\mathrm{Tor}$-regularity and CM-regularity in \cite{DongWu2009, Jorgensen1999, Jorgensen2004} were given for complexes of graded modules. These notions coincide with the above when identifying a graded left $A$-module $M$ with the complex $M^\bullet$ with 
   \[ M^\bullet_i = \begin{cases} M & i = 0 \\ 0 & i \neq 0 \end{cases} \] and zero morphisms. 
\end{itemize}
\end{remark}

We show below that all these notions of regularity are invariant under Morita--Takeuchi equivalence. The following lemma is a crucial step towards establishing this invariance but first, we provide the context that will be used for the rest of this section. Let $H$ and $K$ be Hopf algebras that are Morita--Takeuchi equivalent via the monoidal equivalence $(\widetilde{(-)},\ \xi^{-1}): {\rm comod}(H) \stackrel{\otimes}{\cong} {\rm comod}(K)$ described in \eqref{eq:MTE}. For a connected graded $H$-comodule algebra $A$, $\widetilde{A}$ is the corresponding connected graded $K$-comodule algebra. By restriction, we have an induced equivalence between the two categories of graded relative modules 
\begin{align}\label{functor-rel}
\!_{\Gr(A)}{\mathcal M}^H~ &\cong~ \!_{\Gr(\widetilde{A})}{\mathcal M}^{K}\\
M~&\mapsto ~\widetilde{M}\notag
\end{align}
for any $M\in \!_{\Gr(A)}{\mathcal M}^H$. 

Now, we recall the tensor product of two graded relative modules in the sense of \cite[Definition 7.8.21]{EGNO}. View the connected graded $H$-comodule algebra $A$ as a connected graded algebra in the monoidal category ${\rm comod}(H)$ as in \cite[Definition 2.4.8]{EGNO}. Let $M$ be a right graded relative $(A,H)$-module and $N$ be a left graded relative $(A,H)$-module. By \cite[Definition 7.8.5]{EGNO}, $M$ and $N$ are graded modules over the graded algebra $A$ in ${\rm comod}(H)$. Then a \emph{tensor product} of $M$ and $N$ over $A$ is the object $M\otimes_AN\in {\rm comod}(H)$ defined as the co-equalizer of the diagram \begin{align}\label{def:tensorproduct}
\xymatrix{
M\otimes A\otimes N\ar@<-0.5ex>[rr]_-{\mu_M\otimes \id_N}\ar@<0.5ex>[rr]^-{\id_M\otimes \mu_N} && M\otimes N\ar[r]\ar@{=} &M\otimes_AN
}
\end{align}
where $\mu_M: M\otimes A\to M$ and $\mu_N: A\otimes N\to N$ are the graded $A$-module structure maps on $M$ and $N$, respectively. It is clear that $M\otimes_AN$ is a graded $H$-comodule and it coincides with the usual tensor product over $A$ by applying the forgetful functor from ${\rm comod}(H)$ to the category of vector spaces.

\begin{lemma}
\label{tensor-twist}
Let $M$ be a right graded relative $(A,H)$-module and $N$ be a left graded relative $(A,H)$-module. Then there is a natural isomorphism  
\[
\widetilde{M \otimes_A N}~ \cong~ \widetilde{M} \otimes_{\widetilde{A}} \widetilde{N},
\]
of graded $K$-comodules.
\end{lemma}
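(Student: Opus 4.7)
The plan is to leverage the fact that $\widetilde{(-)} = (-)\square_H T$ is a monoidal equivalence from ${\rm comod}(H)$ to ${\rm comod}(K)$, together with the description of $M\otimes_A N$ as a coequalizer in \eqref{def:tensorproduct}. Two general features of monoidal equivalences do all the work: (i) being an equivalence, $\widetilde{(-)}$ preserves all colimits, in particular coequalizers; and (ii) being monoidal, it intertwines the tensor products via the natural isomorphism $\xi^{-1}$, and therefore carries algebras to algebras and modules to modules in a compatible way.

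First, I would apply $\widetilde{(-)}$ to the coequalizer diagram \eqref{def:tensorproduct}. Since $\widetilde{(-)}$ is an equivalence, $\widetilde{M\otimes_A N}$ is the coequalizer in ${\rm comod}(K)$ of the parallel pair $\widetilde{\mu_M\otimes \id_N}$ and $\widetilde{\id_M\otimes \mu_N}$ from $\widetilde{M\otimes A\otimes N}$ to $\widetilde{M\otimes N}$. Second, using the natural isomorphism $\xi^{-1}$ iteratively (and invoking its compatibility with the associativity constraint), I would identify
\[
\widetilde{M\otimes A\otimes N}~\cong~ \widetilde{M}\otimes \widetilde{A}\otimes \widetilde{N} \qquad \text{and} \qquad \widetilde{M\otimes N}~\cong~\widetilde{M}\otimes \widetilde{N}
\]
as graded $K$-comodules. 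By definition the $\widetilde{A}$-actions on $\widetilde{M}$ and $\widetilde{N}$ are the composites $\widetilde{M}\otimes \widetilde{A}\xrightarrow{\xi^{-1}_{M,A}}\widetilde{M\otimes A}\xrightarrow{\widetilde{\mu_M}}\widetilde{M}$ and $\widetilde{A}\otimes \widetilde{N}\xrightarrow{\xi^{-1}_{A,N}}\widetilde{A\otimes N}\xrightarrow{\widetilde{\mu_N}}\widetilde{N}$.

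Third, I would check that under these identifications the translated parallel pair is precisely the pair $\widetilde{\mu_M}\otimes \id_{\widetilde{N}}$ and $\id_{\widetilde{M}}\otimes \widetilde{\mu_N}$ defining $\widetilde{M}\otimes_{\widetilde{A}}\widetilde{N}$ via \eqref{def:tensorproduct}. This is a diagram chase using naturality of $\xi^{-1}$ in both arguments together with the hexagon/pentagon coherence for the monoidal structure. Once both coequalizers are identified with one another, the universal property yields a natural isomorphism $\widetilde{M\otimes_A N}\cong \widetilde{M}\otimes_{\widetilde{A}}\widetilde{N}$ in ${\rm comod}(K)$, and naturality in $M$ and $N$ follows from the naturality of $\xi^{-1}$ and the universal property of coequalizers.

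The principal obstacle is the coherence check in the third step: one has to ensure that the identification $\widetilde{M\otimes A\otimes N}\cong \widetilde{M}\otimes \widetilde{A}\otimes \widetilde{N}$ is chosen consistently on both branches of the parallel pair so that the two square diagrams commute. This is routine but slightly tedious; alternatively, one can bypass the explicit diagram chase by invoking the general principle \cite[Remark 7.8.22 and Proposition 7.8.10]{EGNO} that a monoidal equivalence between abelian monoidal categories with reflexive coequalizers induces an equivalence on the categories of modules over corresponding algebras that commutes with the internal relative tensor product, which is exactly the content of the lemma.
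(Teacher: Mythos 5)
Your proposal is correct and follows essentially the same route as the paper: apply the monoidal equivalence to the coequalizer presentation \eqref{def:tensorproduct}, identify the first two terms via $\xi^{-1}$, verify the resulting squares commute, and conclude that the coequalizers agree (the paper phrases this last step as an application of the 5-lemma to the two exact rows, which is the same as your appeal to the universal property). Your explicit attention to the coherence check in the third step is the only substantive detail the paper leaves implicit.
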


\begin{proof}
Applying the monoidal equivalence $(\widetilde{(-)}, \xi^{-1})$ between ${\rm comod}(H)$ and ${\rm comod}(K)$ to \eqref{def:tensorproduct}, we have the commutative diagram
\[
\xymatrix{
\widetilde{M\otimes A\otimes N}\ar@<-0.5ex>[rr]_-{\widetilde{\mu_M\otimes \id_N}}\ar@<0.5ex>[rr]^-{\widetilde{\id_M\otimes \mu_N}}\ar[d]^{\cong} && \widetilde{M\otimes N}\ar[r]\ar[d]^{\cong} &\widetilde{M\otimes_AN} \ar[r]\ar[d]  & 0 \\
\widetilde{M}\otimes \widetilde{A}\otimes \widetilde{N}\ar@<-0.5ex>[rr]_-{\mu_{\widetilde{M}}\otimes \id_{\widetilde{N}}}\ar@<0.5ex>[rr]^-{\id_{\widetilde{M}}\otimes \mu_{\widetilde{N}}} && \widetilde{M}\otimes \widetilde{N}\ar[r] & \widetilde{M}\otimes_{\widetilde{A}} \widetilde{N}  \ar[r] & 0 
}
\]
with exact rows in ${\rm comod}(K)$. Our result follows from the 5-lemma.   
\end{proof}

\begin{lemma}
\label{Cor:Tor2cocycle}
Let $M$ be a right graded relative $(A,H)$-module and $N$ be a left graded relative $(A,H)$-module. Then we have an isomorphism 
\[ \widetilde{\underline{\Tor}^A_i(M,N)}~\cong~ \underline{\Tor}^{\widetilde{A}}_i(\widetilde{M},\widetilde{N}) \]
of graded $K$-comodules, for all $i\ge 0$.
\end{lemma}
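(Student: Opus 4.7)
The approach is to compute $\underline{\Tor}^A_i(M,N)$ using a resolution of $N$ inside the category of graded relative $(A,H)$-modules, then transport this resolution through the monoidal equivalence \eqref{functor-rel} and invoke \Cref{tensor-twist} termwise. The bookkeeping is then a standard argument that an exact monoidal equivalence commutes with derived functors built from flat resolutions.

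First I would apply \Cref{free-res-rel} to choose a resolution $P_\bullet \to N \to 0$ of $N$ in $\!_{\Gr(A)}\mathcal{M}^H$ with each $P_j \cong A \otimes V_j$ for some graded $H$-comodule $V_j$. Forgetting the $H$-comodule structure, each $P_j$ is a free graded left $A$-module, hence flat, so $P_\bullet$ also serves as a flat resolution in the category of graded left $A$-modules. In particular the complex $M \otimes_A P_\bullet$, which lives in $\comod(H)$ with its natural grading, has homology canonically isomorphic to $\underline{\Tor}^A_i(M,N)$ as graded $H$-comodules.

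Next I would apply the equivalence \eqref{functor-rel} to this resolution. Because $\widetilde{(-)}$ is an equivalence of abelian categories it is exact, so $\widetilde{P_\bullet} \to \widetilde{N} \to 0$ is exact in $\!_{\Gr(\widetilde{A})}\mathcal{M}^{K}$. Moreover, the monoidal data $\xi^{-1}$ gives $\widetilde{P_j} = \widetilde{A \otimes V_j} \cong \widetilde{A} \otimes \widetilde{V_j}$ as graded $\widetilde{A}$-modules in $\comod(K)$, so each $\widetilde{P_j}$ is free, and in particular flat over $\widetilde{A}$. Thus $\widetilde{P_\bullet}$ is a flat resolution of $\widetilde{N}$ that computes $\underline{\Tor}^{\widetilde{A}}_i(\widetilde{M}, \widetilde{N})$.

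Finally I would combine the two ingredients. Exactness of $\widetilde{(-)}$ gives
\[
\widetilde{\underline{\Tor}^A_i(M,N)} \;=\; \widetilde{H_i(M \otimes_A P_\bullet)} \;\cong\; H_i\bigl(\widetilde{M \otimes_A P_\bullet}\bigr),
\]
while \Cref{tensor-twist}, applied to each degree and naturally in the $P_j$, upgrades to an isomorphism of chain complexes $\widetilde{M \otimes_A P_\bullet} \cong \widetilde{M} \otimes_{\widetilde{A}} \widetilde{P_\bullet}$ in $\comod(K)$. Taking homology and combining the two displayed identifications yields the desired isomorphism in $\comod(K)$, with grading preserved throughout.

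The main technical point to watch is that the ``relative'' Tor computed via a resolution in $\!_{\Gr(A)}\mathcal{M}^H$ really does agree with ordinary graded Tor over $A$ (and carries a natural graded $H$-comodule structure); this reduces to the observation that each $A \otimes V_j$ is flat over $A$, which is immediate. A secondary point is to check that the isomorphism of \Cref{tensor-twist} is natural in both arguments, so that it assembles into an isomorphism of complexes rather than merely a collection of termwise isomorphisms; this is built into the coequalizer definition \eqref{def:tensorproduct} and the naturality of $\xi^{-1}$.
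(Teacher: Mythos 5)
Your proposal is correct and follows essentially the same route as the paper: take a free resolution of $N$ in $\!_{\Gr(A)}\mathcal{M}^H$ via \Cref{free-res-rel}, observe that the equivalence \eqref{functor-rel} carries it to a free resolution of $\widetilde{N}$, apply \Cref{tensor-twist} termwise, and use exactness of $\widetilde{(-)}$ to commute it past homology. The extra remarks on flatness and naturality are sound but the paper treats these as implicit.
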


\begin{proof}
By \Cref{free-res-rel}, we have a resolution $P_\bullet$ of $N$ in $\,_{\Gr(A)}{\mathcal M}^H$ consisting of free graded relative $(A,H)$-modules. It is clear that the equivalence in \eqref{functor-rel} between relative modules induced by the monoidal equivalence $(\widetilde{(-)},\xi^{-1})$ sends free graded relative $(A,H)$-modules to free graded relative $(\widetilde{A},K)$-modules. Thus, $\widetilde{P_\bullet}$ is a free graded resolution of $\widetilde{N}$ in $\,_{\Gr(\widetilde{A})}{\mathcal M}^K$. As a consequence, both $P_\bullet$ and $\widetilde{P_\bullet}$ are free graded resolutions of $N$ and $\widetilde{N}$ as graded modules over $A$ and $\widetilde{A}$, respectively, so \Cref{tensor-twist} implies that
\begin{align*}
\underline{\Tor}^{\widetilde{A}}_i(\widetilde{M},\widetilde{N}) ={\rm H}_i \left(\widetilde{M}\otimes_{\widetilde{A}} \widetilde{P_\bullet}\right)
\cong {\rm H}_i \left(\widetilde{M\otimes_A P_\bullet}\right)
\cong \widetilde{{\rm H}_i (M\otimes_A P_\bullet)} 
=\widetilde{\underline{\Tor}^A_i(M,N)}.
\end{align*}
For the last isomorphism, we use the fact that the monoidal equivalence $(\widetilde{(-)},\xi^{-1})$ is exact and so it commutes with the homology functor. 
\end{proof}

When $H$ is finite-dimensional, the following results were observed in \cite[Corollary 4.5, Proposition 4.7]{CS2}. We now prove it for an \emph{arbitrary} Hopf algebra $H$ using $\Tor$. For background on Koszulity and $N$-Koszulity, see \cite{Berger2001,PolishchukPositselski2005}. 

\begin{Cor}
\label{Cor:gldim}
Let $A$ and $\widetilde{A}$ be as above. We have the following Morita--Takeuchi invariants for connected graded comodule algebras:  
\begin{itemize}
    \item[(1)] ${\rm gl.dim}(A)={\rm gl.dim}(\widetilde{A})$;
    \item[(2)] $A$ is $N$-Koszul if and only if $\widetilde{A}$ is $N$-Koszul. 
\end{itemize}
\end{Cor}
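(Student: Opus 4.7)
The plan is to derive both statements from \Cref{Cor:Tor2cocycle} applied to $M = N = \kk$, after identifying how the trivial module behaves under the equivalence $\widetilde{(-)}$. First I would observe that $\kk = A/A_{\ge 1}$ is naturally a graded relative $(A,H)$-module, concentrated in degree $0$, with the trivial $H$-coaction $1 \mapsto 1 \otimes 1$; as an $H$-comodule it is the unit object of ${\rm comod}(H)$. Since $\widetilde{(-)}$ is a monoidal equivalence, it sends the unit to the unit, so $\widetilde{\kk} \cong \kk$ as (degree-$0$) $K$-comodules. Because $\widetilde{A}_0 = \widetilde{A_0} \cong \kk$, the connected graded algebra $\widetilde{A}$ has trivial module $\widetilde{A}/\widetilde{A}_{\ge 1} \cong \widetilde{\kk}$, so this identification is compatible with the $\widetilde{A}$-module structure.

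For (1), I would use the standard fact that for a connected graded algebra $A$ one has $\mathrm{gl.dim}(A) = \mathrm{pd}_A(\kk) = \sup\{ i \ge 0 : \underline{\Tor}_i^A(\kk,\kk) \ne 0 \}$. Applying \Cref{Cor:Tor2cocycle} with $M = N = \kk$ yields a $\mathbb{Z}$-graded $K$-comodule isomorphism
\[
\widetilde{\underline{\Tor}_i^A(\kk,\kk)} \;\cong\; \underline{\Tor}_i^{\widetilde{A}}(\kk,\kk)
\]
for every $i \ge 0$. Since $\widetilde{(-)}$ is an equivalence it reflects the zero object, so $\underline{\Tor}_i^A(\kk,\kk) = 0$ if and only if $\underline{\Tor}_i^{\widetilde{A}}(\kk,\kk) = 0$. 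Taking the supremum over $i$ gives $\mathrm{gl.dim}(A) = \mathrm{gl.dim}(\widetilde{A})$.

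For (2), I would invoke the standard characterization that a connected graded algebra finitely generated in degree one is $N$-Koszul exactly when $\underline{\Tor}_i^A(\kk,\kk)$ is concentrated in a single internal degree $\ell_N(i)$ for every $i \ge 0$, where $\ell_N(i)$ depends only on $i$ and $N$. The equivalence $\widetilde{(-)}$ is applied degree-wise to $\mathbb{Z}$-graded comodules, so the isomorphism above is also degree-preserving in the internal grading. Hence $\underline{\Tor}_i^A(\kk,\kk)_j = 0$ if and only if $\underline{\Tor}_i^{\widetilde{A}}(\kk,\kk)_j = 0$ for every pair $(i,j)$, and the $N$-Koszul concentration property transfers in both directions.

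The only subtle point is the initial bookkeeping: verifying that $\widetilde{\kk}$ really is the trivial module of $\widetilde{A}$, and that the isomorphism in \Cref{Cor:Tor2cocycle} genuinely respects the internal $\mathbb{Z}$-grading (not merely the $K$-comodule structure). Both are essentially built into the construction of the monoidal equivalence and the definition of graded relative modules, so once they are recorded, (1) and (2) follow immediately from the fact that an equivalence preserves and reflects vanishing of graded pieces of $\Tor$.
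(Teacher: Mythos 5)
Your proposal is correct and follows essentially the same route as the paper: both reduce to \Cref{Cor:Tor2cocycle} applied with $M=N=\kk$ (viewed as the trivial relative $(A,H)$-module), characterize global dimension as $\sup\{i : \underline{\Tor}_i^A(\kk,\kk)\neq 0\}$, and characterize $N$-Koszulity by the concentration of each $\underline{\Tor}_i^A(\kk,\kk)$ in a single internal degree. Your extra bookkeeping identifying $\widetilde{\kk}$ with the trivial module of $\widetilde{A}$ via the unit object of the monoidal equivalence is a slightly more explicit version of what the paper leaves implicit.
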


\begin{proof}
By \Cref{rem:trivial}, the trivial module $\kk=A/A_{\ge 1}$ is a relative $(A,H)$-module. Since $\widetilde{A}$ is again connected graded, $\kk=\widetilde{A}/\widetilde{A}_{\ge 1}$ is also a relative $(\widetilde{A},K)$-module. 

(1): By \cite[Comment before Lemma 2.3]{SteZha} and \Cref{Cor:Tor2cocycle}, we have 
   \[{\rm gl.dim}(A)=\max\{i\,|\, \Tor^A_i(\kk,\kk)\neq 0\}=\max\{i\,|\, \widetilde{\Tor^A_i(\kk,\kk)}\neq 0\}=\max\{i\,|\, \Tor^{\widetilde{A}}_i(\kk,\kk)\neq 0\}={\rm gl.dim}(\widetilde{A}).\]

(2): A connected graded algebra $A$ is $N$-Koszul if each term in the graded minimal resolution of $\kk$ is concentrated in just one degree. This fact is equivalent to $\underline{\Tor}^A_i(\kk,\kk)$ being concentrated in just one degree for all $i$ and so, the result follows by \Cref{Cor:Tor2cocycle}. 
\end{proof}

Recall the monoidal equivalence \eqref{functor-rel} $\widetilde{(-)}: M\mapsto \widetilde{M}:=M\square_HT=(M\otimes T)^{\co H}$, where $T$ is an $H$-$K$-bi-Galois object. Note that in the expression $M\square_HT$ we treat $T$ as a left $H$-comodule, while in the expression $(M\otimes T)^{\co H}$ we treat $T$ as a right $H$-comodule via the antipode of $H$. The algebra structure on $T$ remains the same in both cases. The following lemma is an analogue of \cite[Proposition 3.2]{CS2} in the case of relative modules.

\begin{lemma}
\label{lem:moduleMTE}
Let $H,K$ and $A$ be as above. Suppose $H$ has a bijective antipode. Let $M,N\in \!_{{\rm Gr}(A)}\mathcal M^{H}$, and assume $M$ has a presentation of finite free graded relative $(A,H)$-modules in $\,_{{\rm Gr}(A)}\mathcal M^{H}$. Then there is a natural isomorphism of graded $K$-comodules 
\[
\widetilde{\underline{\Hom}_A(M,N)}~\cong~\underline{\Hom}_{\widetilde{A}}(\widetilde{M},\widetilde{N}).
\]
\end{lemma}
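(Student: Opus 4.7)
The plan is to reduce to the finite free case using the finite-free presentation of $M$, and then compare the two complexes obtained by applying $\underline{\Hom}_A(-, N)$ followed by $\widetilde{(-)}$, versus first applying $\widetilde{(-)}$ and then $\underline{\Hom}_{\widetilde{A}}(-, \widetilde{N})$, via the five lemma. The bijectivity of the antipode of $H$ is what makes $\underline{\Hom}_A(M,N)$ naturally a graded $H$-comodule in the first place, via \eqref{eq:coactionHom} and \Cref{lem:relHom}(1).

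\emph{Step 1: the finite free case.} First I would treat the case when $M = A \otimes V$ for a finite-dimensional graded $H$-comodule $V$ (viewed as a free graded relative $(A,H)$-module in the standard way). The $\otimes$-Hom adjunction yields a natural isomorphism of graded $H$-comodules
\[
\underline{\Hom}_A(A \otimes V,\, N)~\cong~\underline{\Hom}_\kk(V, N)~\cong~V^* \otimes N,
\]
where in the last step we use that $V$ is finite-dimensional. Applying $\widetilde{(-)}$ and using that it is a monoidal equivalence whose restriction to finite-dimensional comodules preserves duals (since it preserves rigidity), I obtain natural $K$-comodule isomorphisms
\[
\widetilde{V^* \otimes N}~\cong~\widetilde{V^*}\otimes \widetilde{N}~\cong~(\widetilde{V})^*\otimes \widetilde{N}~\cong~\underline{\Hom}_\kk(\widetilde{V},\, \widetilde{N})~\cong~\underline{\Hom}_{\widetilde{A}}(\widetilde{A}\otimes \widetilde{V},\, \widetilde{N}).
\]
Monoidality gives $\widetilde{A\otimes V}\cong \widetilde{A}\otimes \widetilde{V}$ as free graded relative $(\widetilde{A},K)$-modules, which proves the lemma in this case.

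\emph{Step 2: the general case.} By hypothesis, $M$ admits a finite free presentation
\[ P_1 \longrightarrow P_0 \longrightarrow M \longrightarrow 0 \]
in $\,_{\Gr(A)}\mathcal{M}^H$. Apply the left exact contravariant functor $\underline{\Hom}_A(-,N)$ and then the exact functor $\widetilde{(-)}$ to obtain a left exact sequence of graded $K$-comodules
\[
0 \longrightarrow \widetilde{\underline{\Hom}_A(M,N)} \longrightarrow \widetilde{\underline{\Hom}_A(P_0,N)} \longrightarrow \widetilde{\underline{\Hom}_A(P_1,N)}.
\]
On the other hand, $\widetilde{(-)}$ sends the finite free presentation of $M$ to a finite free presentation $\widetilde{P_1}\to \widetilde{P_0}\to \widetilde{M}\to 0$ of $\widetilde M$ in $\,_{\Gr(\widetilde{A})}\mathcal{M}^K$, and applying $\underline{\Hom}_{\widetilde{A}}(-,\widetilde{N})$ yields a second left exact sequence. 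Step 1 provides natural graded $K$-comodule isomorphisms between the two rightmost pairs of terms; to invoke the five lemma I need the compatibility of these isomorphisms with the connecting maps, which follows from naturality in $M$ of the adjunction-based isomorphism in Step 1, together with compatibility of the monoidal structure constraint $\xi$ with evaluation and composition.

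The main obstacle is verifying this naturality carefully at the level of explicit formulas: I would either track the action of $\widetilde{(-)} = (-)\square_H T$ on the adjunction unit and on composition maps, or (more efficiently) I would extract the compatibility from \Cref{lem:relHom}(3)-(4) together with the fact that the natural isomorphism $\xi^{-1}_{V,W}$ is natural in both arguments and satisfies the monoidal pentagon axiom. This reduces the verification to checking a diagram involving free modules, which is straightforward. Everything else — exactness of $\widetilde{(-)}$, compatibility with the forgetful functor to graded vector spaces, the fact that $\widetilde{A}\otimes \widetilde{V}$ is again a finite free graded relative module — is formal consequence of $(\widetilde{(-)},\xi^{-1})$ being a monoidal equivalence restricting to finite-dimensional comodules.
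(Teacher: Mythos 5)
Your argument is essentially correct, but it takes a genuinely different route from the paper's. The paper does not reduce to the free case at the level of $\underline{\Hom}$-complexes; instead it exploits the explicit description $\widetilde{(-)}=(-\,\square_H T)=(-\otimes T)^{\co H}$ for the bi-Galois object $T$ and proves the statement by a single chain of isomorphisms
\[
\underline{\Hom}_{\widetilde{A}}(\widetilde{M},\widetilde{N})\cong \underline{\Hom}_A^H(M,N\otimes T)\cong \underline{\Hom}_A(M,N\otimes T)^{\co H}\cong \bigl(\underline{\Hom}_A(M,N)\otimes T\bigr)^{\co H}\cong \widetilde{\underline{\Hom}_A(M,N)},
\]
where the first step uses the injectivity of $T$ as a left $H$-comodule together with \cite[Propositions 3.1 and 3.3]{CS2}, and the remaining steps are \Cref{lem:relHom}(2) and (4). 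The finite free presentation of $M$ enters only through \Cref{lem:relHom}(4), whose proof is where the paper runs the five-lemma argument you propose (there applied to $\Psi_M$ rather than to your Step 1 isomorphism). So your proof effectively inlines and restructures that reduction: you avoid citing the external results of Chirvasitu--Smith at the cost of having to verify by hand that your adjunction-based isomorphism $\underline{\Hom}_A(A\otimes V,N)\cong \underline{\Hom}_\kk(V,N)$ is natural with respect to \emph{arbitrary} morphisms of free relative modules $A\otimes V_1\to A\otimes V_0$ (not merely those induced by comodule maps $V_1\to V_0$), which is the analogue of \Cref{lem:relHom}(3) and is where the real work hides. Two bookkeeping cautions: since $\comod(H)$ is not symmetric, the identification in your Step 1 should be with $N\otimes V^*$ for the appropriate (right) dual determined by the coaction formula \eqref{eq:coactionHom} involving $S^{-1}$ — this is also where bijectivity of the antipode is used — and the order of tensorands matters when you invoke preservation of duals by the monoidal equivalence. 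With those points attended to, your approach is sound and arguably more self-contained, while the paper's is shorter given the imported machinery.
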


\begin{proof}
Suppose $P_1\to P_0\to M\to 0$ is a presentation of $M$ in $\,_{{\rm Gr}(A)}\mathcal M^{H}$, where $P_0,P_1$ are finite free graded $(A,H)$-modules. Since $\widetilde{(-)}$ is a monoidal equivalence, $\widetilde{P_1}\to \widetilde{P_0}\to \widetilde{M}\to 0$ is a presentation of $\widetilde{M}$ in $\,_{{\rm Gr}(\widetilde{A})}\mathcal M^{K}$, where $\widetilde{P_0},\widetilde{P_1}$ are also finite free graded $(\widetilde{A},K)$-modules. By \Cref{lem:relHom}(1), $\underline{\Hom}_A(M,N)$ has a graded right $H$-comodule structure and $\underline{\Hom}_{\widetilde{A}}(\widetilde{M},\widetilde{N})$ has a graded right $K$-comodule structure. 

Since $T$ is an $H$-$K$-bi-Galois object, $T$ is injective as a left $H$-comodule. Therefore, we can apply \cite[Proposition 3.1 and Proposition 3.3]{CS2} to obtain the following isomorphisms of graded $K$-comodules
\begin{align*}
 \underline{\Hom}_{\widetilde{A}}(\widetilde{M},\widetilde{N})\cong \underline{\Hom}_A^H(M,N\otimes T)\cong  \underline{\Hom}_A(M,N\otimes T)^{\co H}\cong \left(\underline{\Hom}_A(M,N)\otimes T\right)^{\co H}\cong \widetilde{\underline{\Hom}_A(M,N)},
\end{align*}
where the second and third isomorphisms follow from \Cref{lem:relHom}(2) and from \Cref{lem:relHom}(4), respectively. 
\end{proof}

The following result is a generalization of \cite[Theorem 4.2]{CS2}, to the extent that $H$ is not assumed to be finite-dimensional. 

\begin{thm}
\label{thm:tensor-twist}
Let $H,K$ and $A$ be as above. Suppose $H$ has a bijective antipode, and $A$ is noetherian. Let $M,N$ be two graded relative $(A,H)$-modules, where $M$ is finitely generated over $A$. Then for each $i \geq 0$, there is an isomorphism 
\[ \widetilde{\underline{\Ext}_A^i(M,N)}~ \cong~ \underline{\Ext}_{\widetilde{A}}^i(\widetilde{M}, \widetilde{N}) \]
of graded $K$-comodules.
\end{thm}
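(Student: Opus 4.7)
The plan is to mimic the $\Tor$ argument from \Cref{Cor:Tor2cocycle}, replacing the tensor product input \Cref{tensor-twist} with its $\underline{\Hom}$ analogue \Cref{lem:moduleMTE}. First, since $A$ is noetherian and $M$ is finitely generated over $A$, \Cref{free-res-rel} produces a resolution $P_\bullet \to M$ in $\,_{\Gr(A)}\mathcal{M}^H$ in which each $P_i$ is a \emph{finite} free graded relative $(A,H)$-module. Forgetting the $H$-comodule structure, each $P_i$ is a finite free graded left $A$-module, so $P_\bullet$ is a free (hence projective) resolution of $M$ in the category of graded left $A$-modules, and may therefore be used to compute $\underline{\Ext}_A^i(M,N)$.

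Next, I would apply the monoidal equivalence $\widetilde{(-)}$ of \eqref{functor-rel} to this resolution. The equivalence sends finite free graded relative $(A,H)$-modules to finite free graded relative $(\widetilde{A},K)$-modules, and is exact as part of a categorical equivalence. Hence $\widetilde{P_\bullet}\to \widetilde{M}$ is a resolution of $\widetilde{M}$ by finite free graded relative $(\widetilde{A},K)$-modules, and likewise computes $\underline{\Ext}_{\widetilde{A}}^i(\widetilde{M},\widetilde{N})$.

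Now I would invoke \Cref{lem:moduleMTE} termwise: each $P_i$, being itself finite free, trivially admits a presentation by finite free graded relative $(A,H)$-modules (namely $0 \to P_i \xrightarrow{\id} P_i \to 0$), so the lemma yields a natural isomorphism of graded $K$-comodules
\[
\widetilde{\underline{\Hom}_A(P_i, N)} \;\cong\; \underline{\Hom}_{\widetilde{A}}(\widetilde{P_i}, \widetilde{N}).
\]
Naturality in the first argument assembles these isomorphisms into an isomorphism of cochain complexes of graded $K$-comodules
\[
\widetilde{\underline{\Hom}_A(P_\bullet, N)} \;\cong\; \underline{\Hom}_{\widetilde{A}}(\widetilde{P_\bullet}, \widetilde{N}).
\]
Finally, because $\widetilde{(-)}$ is exact, it commutes with the $i$th cohomology functor, so taking cohomology on both sides gives the desired isomorphism $\widetilde{\underline{\Ext}_A^i(M,N)} \cong \underline{\Ext}_{\widetilde{A}}^i(\widetilde{M}, \widetilde{N})$ as graded $K$-comodules.

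The only delicate point is making sure \Cref{lem:moduleMTE} can be applied at each stage; this is exactly where the noetherianity of $A$ and the finite generation of $M$ enter, since they force the resolution $P_\bullet$ to be termwise \emph{finite} free and thereby satisfy the finite-free-presentation hypothesis of that lemma. Beyond this bookkeeping, the argument is formal, inheriting $K$-colinearity of all constructions from the naturality of the isomorphism in \Cref{lem:moduleMTE} and the exactness of the monoidal functor $\widetilde{(-)}$.
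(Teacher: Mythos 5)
Your proposal is correct and follows essentially the same route as the paper's proof: resolve $M$ by finite free graded relative $(A,H)$-modules via \Cref{free-res-rel}, transport the resolution through the monoidal equivalence, apply \Cref{lem:moduleMTE} to identify the $\underline{\Hom}$ complexes, and use exactness of $\widetilde{(-)}$ to commute with cohomology. The only difference is that you make explicit the termwise verification of the finite-free-presentation hypothesis of \Cref{lem:moduleMTE} and the naturality needed to assemble the isomorphisms into one of cochain complexes, both of which the paper leaves implicit.
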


\begin{proof}
Since $A$ is noetherian and $M$ is finitely generated over $A$, by \Cref{free-res-rel}, we have a resolution $P_\bullet$ of $M$ in $\,_{\Gr(A)}{\mathcal M}^H$ consisting of finite free graded relative $(A,H)$-modules. It follows that $\widetilde{P_\bullet}$ is a finite free graded resolution of $\widetilde{M}$ in $\,_{\Gr(\widetilde{A})}{\mathcal M}^K$. In particular, both $P_\bullet$ and $\widetilde{P_\bullet}$ are finite free graded resolutions of $M$ and $\widetilde{M}$ as graded modules over $A$ and $\widetilde{A}$, respectively. Hence we can apply \Cref{lem:moduleMTE} to obtain the following isomorphisms of graded $K$-comodules: 
\begin{align*}
\underline{\Ext}_{\widetilde{A}}^i(\widetilde{M},\widetilde{N})&={\rm H}^i\left(\underline{\Hom}_{\widetilde{A}}(\widetilde{P}_\bullet,\widetilde{N})\right)
\cong 
{\rm H}^i\left(\widetilde{\underline{\Hom}_A(P_\bullet,N)}\right) 
\cong \widetilde{{\rm H}^i (\underline{\Hom}_A(P_\bullet},N)) \cong \widetilde{\underline{\Ext}_A^i(M,N)},
\end{align*}
where the second to last isomorphism follows from the fact that $\widetilde{(-)}$ is an exact functor. 
\end{proof}

We now prove that certain homological properties, including numerical regularities, for connected graded comodule algebras are invariant under Morita--Takeuchi equivalence. 

\begin{thm}
\label{thm:Torreg}
Let $H$ be any Hopf algebra with a bijective antipode and $A$ be a noetherian connected graded $H$-comodule algebra. The following numerical regularities are invariant under any Morita--Takeuchi equivalence:
 \begin{itemize}
 \item[(1)] ${\rm Torreg}(\kk)$, where $\kk$ is considered as a trivial module over $A$;
 \item[(2)] ${\rm CMreg}(A)$; and
 \item[(3)] ${\rm ASreg}(A)$.
\end{itemize}
Additionally, if $A$ is either
\begin{itemize}
 \item[(4)] $s$-Cohen--Macaulay, or 
 \item[(5)] AS-regular of dimension $d$,
 \end{itemize}
then so is its image under Morita--Takeuchi equivalence. 
\end{thm}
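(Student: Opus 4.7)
The plan is to reduce each of (1)--(5) to the Tor/Ext transfer statements already established: Lemma \ref{Cor:Tor2cocycle} for $\Tor$ and Theorem \ref{thm:tensor-twist} for $\Ext$. Fix a Morita--Takeuchi equivalence $\widetilde{(-)}\colon \comod(H)\xrightarrow{\otimes}\comod(K)$ given by cotensoring with an $H$-$K$-bi-Galois object $T$, and recall from the discussion before Lemma~\ref{Cor:Tor2cocycle} that $\widetilde{(-)}$ restricts to an equivalence $\,_{\Gr(A)}\mathcal{M}^H\cong \,_{\Gr(\widetilde A)}\mathcal{M}^K$, preserves graded Hilbert series (because it preserves dimensions of finite-dimensional comodules, as an equivalence of abelian categories), and sends the monoidal unit to the monoidal unit. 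In particular, the trivial module $\kk=A/A_{\ge 1}$, viewed as a one-dimensional graded relative $(A,H)$-module concentrated in degree zero, is mapped to the corresponding trivial relative $(\widetilde A,K)$-module $\kk=\widetilde A/\widetilde A_{\ge 1}$.

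For (1), write $\mathrm{Torreg}_A(\kk)=\sup\{j-i:\underline{\Tor}^A_i(\kk,\kk)_j\neq 0\}$ and apply Lemma~\ref{Cor:Tor2cocycle} with $M=N=\kk$ to obtain $\widetilde{\underline{\Tor}^A_i(\kk,\kk)}\cong\underline{\Tor}^{\widetilde A}_i(\kk,\kk)$ as graded $K$-comodules. Since $\widetilde{(-)}$ preserves graded Hilbert series, $\underline{\Tor}^A_i(\kk,\kk)_j\neq 0$ if and only if $\underline{\Tor}^{\widetilde A}_i(\kk,\kk)_j\neq 0$, so the regularity agrees on both sides. For (2), the key intermediate fact is that $A_{\ge n}$ is a graded $H$-subcomodule ideal of $A$, so $A/A_{\ge n}\in\,_{\Gr(A)}\mathcal{M}^H$; applying the exact functor $\widetilde{(-)}$ to $0\to A_{\ge n}\to A\to A/A_{\ge n}\to 0$ and comparing Hilbert series in each degree shows $\widetilde{A_{\ge n}}=\widetilde A_{\ge n}$ and $\widetilde{A/A_{\ge n}}\cong \widetilde A/\widetilde A_{\ge n}$. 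Theorem~\ref{thm:tensor-twist} then gives $\widetilde{\underline{\Ext}^i_A(A/A_{\ge n},A)}\cong \underline{\Ext}^i_{\widetilde A}(\widetilde A/\widetilde A_{\ge n},\widetilde A)$; taking the direct limit over $n$, which is preserved by the equivalence $\widetilde{(-)}$, yields $\widetilde{H^i_{\mathfrak m}(A)}\cong H^i_{\widetilde{\mathfrak m}}(\widetilde A)$ as graded $K$-comodules, and matching Hilbert series shows $\mathrm{CMreg}(A)=\mathrm{CMreg}(\widetilde A)$. Claim (3) is then immediate from (1), (2), and $\mathrm{ASreg}(A)=\mathrm{Torreg}(\kk)+\mathrm{CMreg}(A)$.

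For (4), the same isomorphism $\widetilde{H^i_{\mathfrak m}(A)}\cong H^i_{\widetilde{\mathfrak m}}(\widetilde A)$ together with the fact that $\widetilde{(-)}$ reflects zero objects shows that $H^i_{\widetilde{\mathfrak m}}(\widetilde A)$ vanishes exactly when $H^i_{\mathfrak m}(A)$ does, so the Cohen--Macaulay index is preserved. For (5), finite global dimension $d$ transfers by Corollary~\ref{Cor:gldim}(1). For the Gorenstein condition, apply Theorem~\ref{thm:tensor-twist} with $M=\kk$ and $N=A$ to obtain $\underline{\Ext}^i_{\widetilde A}(\kk,\widetilde A)\cong\widetilde{\underline{\Ext}^i_A(\kk,A)}$; this vanishes for $i\neq d$, and for $i=d$ it is isomorphic to $\widetilde{\kk(l)}\cong \kk(l)$ because $\widetilde{(-)}$ preserves the monoidal unit and the internal grading. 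Hence $\widetilde A$ is AS-regular of dimension $d$ with the same AS index.

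The main technical point, and the only place where a short verification is needed beyond invoking the transfer lemmas, is (2): one must check that the direct limit defining local cohomology commutes with $\widetilde{(-)}$ and that the ideal filtration $\{A_{\ge n}\}$ on $A$ transports to the analogous filtration $\{\widetilde A_{\ge n}\}$ on $\widetilde A$. Both follow from exactness of $\widetilde{(-)}$ together with the preservation of gradings and Hilbert series; once these are in place, parts (1), (3), (4), (5) are direct corollaries of Lemma~\ref{Cor:Tor2cocycle}, Theorem~\ref{thm:tensor-twist}, and Corollary~\ref{Cor:gldim}.
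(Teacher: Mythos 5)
Your overall architecture is the same as the paper's: reduce (1) to \Cref{Cor:Tor2cocycle}, (2) and (4) to \Cref{thm:tensor-twist} applied to the directed system $\{\underline{\Ext}_A^i(A/A_{\ge n},A)\}$, (3) to the sum formula, and (5) to \Cref{Cor:gldim} plus \Cref{thm:tensor-twist} with $M=\kk$, $N=A$. That is exactly how the paper proceeds, and the proof goes through.

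However, one auxiliary claim you lean on repeatedly is false: a Morita--Takeuchi equivalence does \emph{not} preserve dimensions of finite-dimensional comodules, and hence does not preserve graded Hilbert series (equivalences of abelian categories do not preserve dimension in general, and the paper itself remarks that polynomial growth --- hence the Hilbert series of $A$ --- is not a Morita--Takeuchi invariant; cf.\ also \Cref{thm:TwistAS}, where preservation of Hilbert series is equivalent to the stronger condition of being a 2-cocycle twist). Fortunately, nothing in your argument actually needs dimensions: for each of (1)--(5) the only facts required are that $\widetilde{(-)}$ preserves the internal grading (each $M_j$ is an $H$-subcomodule, so $\widetilde{M}=\bigoplus_j\widetilde{M_j}$) and that an equivalence reflects zero objects, so $\underline{\Tor}^A_i(\kk,\kk)_j\neq 0$ iff $\underline{\Tor}^{\widetilde A}_i(\kk,\kk)_j\neq 0$, and likewise for the $\underline{\Ext}$ and local cohomology groups. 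Similarly, $\widetilde{A_{\ge n}}=\widetilde{A}_{\ge n}$ follows from degreewise application of the functor, with no Hilbert-series comparison needed. One last small point: in (5), $\underline{\Ext}_A^d(\kk,A)$ is a one-dimensional $H$-comodule but need not be the monoidal unit (its comodule structure is the homological codeterminant), so the correct reason that $\widetilde{\kk(l)}\cong\kk(l)$ as graded vector spaces is that the equivalence sends one-dimensional (invertible) comodules to one-dimensional comodules, not that it preserves the unit. With these justifications substituted, your proof coincides with the paper's.
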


\begin{proof}
(4): Note that the $i$th local cohomology of $A$ can be computed as the direct limit of the direct system $\{\underline{\Ext}_A^i(A/A_{\ge n},A)\,|\, n\ge 0\}$ using a resolution of $A/A_{\ge n}$ consisting of finite free graded relative $(A,H)$-modules. It is clear that $\widetilde{A/A_{\ge n}}=\widetilde{A}/\widetilde{A}_{\ge n}$ for any $n\ge 0$ as graded relative $(\widetilde{A},K)$-modules. Hence \Cref{thm:tensor-twist} implies that the two directed systems 
\[ 
\left\{\widetilde{\underline{\Ext}_A^i(A/A_{\ge n},A)} \,\, \middle|\, n\ge 0\right\}\qquad \text{ and }\qquad \left\{\underline{\Ext}_{\widetilde{A}}^i(\widetilde{A}/\widetilde{A}_{\ge n},\widetilde{A})\, \middle|\, n\ge 0\right\} \] 
are naturally isomorphic as graded $K$-comodules. By \Cref{def:Torreg}, the statement follows.

(5): By \Cref{Cor:gldim}, we have ${\rm gl.dim}(A)={\rm gl.dim}(\widetilde{A})$. \Cref{thm:tensor-twist} implies that $\widetilde{\Ext_A^i(\kk,A)}\cong \Ext_{\widetilde{A}}^{i}(\kk,\widetilde{A})$ for every $i$ since both $\kk=A/A_{\ge 1}$ and $A$ are relative $(A,H)$-modules. The statement follows by \Cref{defn:AS-regular}.

(1): This follows from \Cref{Cor:Tor2cocycle} since both $\kk=A/A_{\ge 1}$ and $A$ are relative $(A,H)$-modules. 

(2): This follows from the same argument as (4).

(3): By definition of ${\rm ASreg}(A)$ in \Cref{def:Torreg}, this follows directly from parts (1) and (2). 
\end{proof}

\subsection{Frobenius property}
\label{subsec:Frobenius}

In this subsection, we show that the Frobenius property is preserved under Morita--Takeuchi equivalence. Recall that a finite-dimensional algebra $A$ is said to be \emph{Frobenius} if there exists an associative nondegenerate bilinear form, called Frobenius form,  $f: A \times A \to \kk$ such that $f(ab,c)=f(a,bc)$, for any $a,b,c \in A$ (see e.g., \cite[\S8]{CMZ} and \cite[\S2]{KJ} for equivalent definitions). While Frobenius algebras are known to have many applications, we are interested in proving its invariance for use in applications to AS-regular algebras, due to the fact that a connected graded algebra $R$ is AS-regular if and only if its Ext-algebra is Frobenius \cite{LPWZ}. We now consider the \emph{Nakayama automorphism} of a Frobenius algebra $A$, which is an automorphism $\mu_A$ of $A$ satisfying 
\begin{equation}\label{eq:Naka}
f(a,b)=f(b,\mu_A(a)),
\end{equation}
for any $a,b\in A$, where $f: A\times A\to \kk$ is the associative nondegenerate bilinear form mentioned above. Every Frobenius algebra admits a Nakayama automorphism \cite[Lemma 3.3]{Smith1994}. In particular, $f$ induces an isomorphism of $A$-bimodules such that $\,^{\mu_A}A \cong A^*$ via $a\mapsto f(a,-)$ for any $a\in A$, where $\,^{\mu_A}A$ is the $A$-bimodule whose left action is twisted by $\mu_A$. As a consequence, the Nakayama automorphism $\mu_A$ is uniquely determined up to an inner automorphism of $A$.

\begin{thm}
\label{lem:twistF}
Let $H$ be any Hopf algebra and $A$ be a finite-dimensional right $H$-comodule algebra. Suppose $A$ is Frobenius with an associative nondegenerate form induced by some $H$-colinear map $f: A\otimes A\to D$ for some one-dimensional $H$-comodule $D$.  Then the Frobenius property of $A$ is preserved under Morita--Takeuchi equivalence.  
\end{thm}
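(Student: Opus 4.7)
The plan is to transfer the Frobenius structure on $A$ through the Morita--Takeuchi equivalence $(\widetilde{(-)},\xi^{-1}): \comod(H)\stackrel{\otimes}{\cong}\comod(K)$. Since the multiplication $m_A: A\otimes A\to A$ and unit $u_A: \kk\to A$ are $H$-colinear, applying $\widetilde{(-)}$ with the monoidal constraints produces a $K$-comodule algebra structure on $\widetilde{A}$. Moreover, as observed at the start of \Cref{sec:properties}, the equivalence preserves one-dimensional comodules, so $\widetilde{D}$ is a one-dimensional $K$-comodule, which we may identify with $\kk$ as a vector space.

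The candidate Frobenius form on $\widetilde{A}$ is the composition
\[
\widetilde{f}^{\natural}:\ \widetilde{A}\otimes\widetilde{A}\xrightarrow{\xi_{A,A}^{-1}}\widetilde{A\otimes A}\xrightarrow{\widetilde{f}}\widetilde{D}\cong\kk.
\]
Associativity of $f$ is expressed by the commutative diagram $f\circ(m_A\otimes\id_A)=f\circ(\id_A\otimes m_A)$ in $\comod(H)$, and applying the monoidal functor $\widetilde{(-)}$ together with the coherence axioms for $\xi$ yields associativity of $\widetilde{f}^{\natural}$ with respect to the multiplication of $\widetilde{A}$.

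The main task is to verify nondegeneracy. Since $A$ is finite-dimensional and $H$ has an antipode, $A$ admits a left dual $A^{\vee}$ in $\comod(H)$, with underlying vector space $A^{*}$ and coaction twisted by the antipode. Nondegeneracy of $f$ in the usual vector-space sense is equivalent to the adjoint map $f^{\sharp}:A\to A^{\vee}\otimes D$ being an isomorphism in $\comod(H)$. Monoidal equivalences preserve left duals, so there is a canonical natural isomorphism $\widetilde{A^{\vee}}\cong\widetilde{A}^{\vee}$ in $\comod(K)$. Applying $\widetilde{(-)}$ to $f^{\sharp}$ and combining with $\xi$ together with this canonical isomorphism yields an isomorphism $\widetilde{A}\xrightarrow{\sim}\widetilde{A}^{\vee}\otimes\widetilde{D}$ in $\comod(K)$, which is precisely the adjoint of $\widetilde{f}^{\natural}$. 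Hence $\widetilde{f}^{\natural}$ is nondegenerate, and together with its associativity this exhibits $\widetilde{A}$ as a Frobenius algebra.

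The main obstacle is the careful verification that $\widetilde{(-)}$ preserves left duals and that the image of $f^{\sharp}$ agrees with the adjoint of $\widetilde{f}^{\natural}$; this reduces to diagram chasing with the evaluation and coevaluation morphisms of the dual pair $(A,A^{\vee})$ in $\comod(H)$. A more hands-on alternative is to use the bi-Galois object description $\widetilde{A}=A\square_H T$: since $T$ is faithfully flat as both a left $H$-comodule and a right $K$-comodule, cotensoring preserves dimensions of finite-dimensional comodules, and nondegeneracy of $\widetilde{f}^{\natural}$ can be checked directly on elements of $A\square_H T$ using nondegeneracy of $f$.
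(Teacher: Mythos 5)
Your proposal is correct and takes essentially the same approach as the paper: the form on $\widetilde{A}$ is built by precomposing $\widetilde{f}$ with the monoidal constraint, associativity follows from functoriality and coherence, and nondegeneracy is recognized as a categorical duality statement preserved by the monoidal equivalence. The paper phrases this last step via the nondegenerate pairing/copairing formalism of Turaev--Virelizier (a pairing between $D^*\otimes A$ and $A$ and its copairing), whereas you phrase it via the adjoint map $A\to A^{\vee}\otimes D$ being an isomorphism of comodules; these are the same argument in different clothing.
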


\begin{proof}
Denote the graded multiplication of $A$ by $m: A\otimes A\to A$, which is $H$-colinear. We apply the monoidal equivalence $\widetilde{(-)}: {\rm comod}_{\rm fd}(H) \to {\rm comod}_{\rm fd}(K)$ in \eqref{eq:MTE} to obtain the corresponding finite-dimensional $K$-comodule algebra $\widetilde{A}$ with multiplication given by 
\[
n~:=~\left( \widetilde{A} \otimes \widetilde{A}\xrightarrow{\xi^{-1}_{A,A}}\widetilde{A\otimes A}\xrightarrow{\widetilde{m}} \widetilde{A}\right). 
\]  
We define a bilinear form on $\widetilde{A}$, which is induced by the following composition of $K$-colinear maps 
$$
g~:=~\left(\widetilde{A}\otimes \widetilde{A}\xrightarrow{\xi^{-1}_{A,A}} \widetilde{A\otimes A}\xrightarrow{\widetilde{f}} \widetilde{D}\right),
$$
where $\widetilde{D}$ is again a one-dimensional $K$-comodule by \cite[Proposition 1.16]{Bichon2014}.
One can check that $g$ is associative with respect to the multiplication $n$ of $\widetilde{A}$ due to the commutative diagram 
\[
\xymatrix{
\widetilde{A}\otimes \widetilde{A}\otimes \widetilde{A}\ar[rrr]^-{{\rm id}\otimes n}\ar[ddd]_-{n\otimes {\rm id}} \ar[dr]_-{\xi^{-1}_{A^{\otimes 3}}} &&& \widetilde{A}\otimes \widetilde{A} \ar[ddd]^{g} \ar[dl]^-{\xi^{-1}_{A^{\otimes 2}}}\\
&\widetilde{A\otimes A\otimes A} 
\ar[r]^-{\widetilde{{\rm id}\otimes m}}\ar[d]_-{\widetilde{m\otimes {\rm id}}}&\widetilde{A\otimes A}\ar[d]^-{\widetilde{f}} &\\
&\widetilde{A\otimes A} \ar[r]^-{\widetilde{f}}& \widetilde{D}\ar@{=}[dr]&\\
\widetilde{A}\otimes\widetilde{A} \ar[ur]_-{\xi^{-1}_{A^{\otimes 2}}} \ar[rrr]^-{g}&&& \widetilde{D}.
}
\]

Denote by $D^*$ the dual object of $D$ in ${\rm comod}_{\rm fd}(H)$ (which is a tensor-inverse to $D$, since $D$ is 1-dimensional). Note that in the sense of \cite[\S 1.5.1]{TV}, since $f: A\otimes A\to D$ is a nondegenerate bilinear map, it induces a nondegenerate pairing between $D^*\otimes A$ and $A$ via the following $H$-comodule maps 
\[
\alpha:~=~\left((D^*\otimes A)\otimes A\xrightarrow{\cong} D^*\otimes (A\otimes A)\xrightarrow{\id \otimes f} D^*\otimes D\xrightarrow{\cong} \kk\right).
\]
The copairing of $\alpha$ is given by 
\[
\beta~:=~\left(\kk\xrightarrow{{\rm coev}} A\otimes A^*\xrightarrow{\id \otimes \eta^{-1}} A\otimes (D^*\otimes A)\right),
\]
where 
\begin{align*}
\eta:&=\left(D^*\otimes A\xrightarrow{\id \otimes {\rm coev}} (D^*\otimes A)\otimes (A\otimes A^*)\xrightarrow{\cong} (D^*\otimes (A\otimes A))\otimes A^*\right.\\
& \qquad \left.\xrightarrow{(\id \otimes f)\otimes \id} (D^*\otimes D)\otimes A^* \xrightarrow{\cong} \kk\otimes A^*=A^*\right)
\end{align*}
is an isomorphism of $H$-comodules because $f$ is nondegenerate. Here, $A^*$ denotes the left dual of $A$ in ${\rm comod}_{\rm fd}(H)$. Hence by \cite[Lemma 1.5]{TV}, 
\[
\tau~:=~\left((\widetilde{D}^*\otimes \widetilde{A})\otimes \widetilde{A}\xrightarrow{\cong}\widetilde{D}^*\otimes (\widetilde{A}\otimes \widetilde{A}) \xrightarrow{\id \otimes \,g} \widetilde{D}^*\otimes\widetilde{D} \xrightarrow{\cong} \kk\right)
\]
is again a nondegenerate pairing,  with inverse 
\begin{align*}
\mu~:=~&\left(\kk=\widetilde{\kk}\xrightarrow{\widetilde{\beta}} \widetilde{A\otimes (D^*\otimes  A)}\xrightarrow{\xi_{A,D^*\otimes A}} \widetilde{A}\otimes \widetilde{D^*\otimes A}\xrightarrow{\id \otimes \, \xi_{D^*,A}} \widetilde{A}\otimes (\widetilde{(D^*)}\otimes \widetilde{A}) \xrightarrow{\cong} \widetilde{A}\otimes (\widetilde{D}^* \otimes \widetilde{A})\right),
\end{align*}
where we identify $\widetilde{(D^*)}=(\widetilde{D})^*$ as one-dimensional $K$-comodules. In particular, the following identities hold 
\begin{align*}
(\tau\otimes \id)\circ (\id\otimes \mu)=\id_{\widetilde{D}^*\otimes \widetilde{A}}\quad{\text{and}}\quad (\id\otimes \tau)\circ (\mu \otimes \id)=\id_{\widetilde{A}}.
\end{align*}
By applying the forgetful functor to the category of finite-dimensional vector spaces, we can identify $\widetilde{D}^*=\widetilde{D}=\kk$ and $\widetilde{D}^*\otimes \widetilde{A}=\widetilde{A}$. Therefore, we have $f=\alpha$ and $g=\tau$ as linear maps. In this case, we can write $\mu(1)=\sum v_i\otimes v^i$ with $v_i\in \widetilde{A}$ and $v^i\in \widetilde{D}^*\otimes \widetilde{A}=\widetilde{A}$. Then the above identities imply that 
\[
\sum g(a,v_i)\,v^i~=~a~=~\sum v_i\,g(v^i,a),
\]
for any $a\in \widetilde{A}$. Since we identify $g$ and $\tau$ as linear maps and $\tau$ is a nondegenerate pairing, so is $g$. As a consequence, $\widetilde{A}$ is Frobenius.  \end{proof}

Suppose for an integer $d \geq 0$,  $A=\bigoplus_{0 \leq i\leq d} A_i$ is a connected graded Frobenius algebra. By \cite[Lemma 3.2]{Smith1994}, the multiplication of $A$
 defines a nondegenerate bilinear form
 \[
 A_i\times A_{d-i}\to A_d
 \]
for each $0\leq i\leq d$. In particular, we have $\dim A_d=\dim A_0=1$. In this case, we can always choose the Nakayama automorphism $\mu_A$ to be a graded automorphism of $A$, which is uniquely determined by $A$. This motivates the following corollary.

\begin{Cor}
\label{cor:Frobconn}
Let $A$ be a finite-dimensional connected graded $H$-comodule algebra. Then the Frobenius property of $A$ is preserved under Morita--Takeuchi equivalence.   
\end{Cor}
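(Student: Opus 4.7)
The plan is to reduce Corollary \ref{cor:Frobconn} to Theorem \ref{lem:twistF} by producing a Frobenius form on $A$ that is $H$-colinear and takes values in a one-dimensional $H$-comodule. The natural candidate is suggested by the paragraph immediately preceding the corollary: since $A = \bigoplus_{0 \leq i \leq d} A_i$ is a connected graded Frobenius algebra, we have $\dim A_d = 1$ and the multiplication restricts to a nondegenerate pairing $A_i \times A_{d-i} \to A_d$ for each $i$. I would set $D := A_d$ as the target. Because the $H$-coaction preserves the grading, every homogeneous component $A_i$ is an $H$-subcomodule of $A$, so $D$ is a one-dimensional $H$-subcomodule.

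Next, I would define $f \colon A \otimes A \to D$ as the composite $\pi \circ m$, where $m$ is the multiplication of $A$ and $\pi \colon A \to A_d$ is the graded projection onto the top degree (zero on all other components). Both $m$ and $\pi$ are $H$-colinear, so $f$ is $H$-colinear. Associativity $f(ab,c) = f(a,bc)$ is immediate because both sides equal $\pi(abc)$. For nondegeneracy, given a nonzero $a \in A$, write $a = a_j + a_{j+1} + \cdots + a_e$ with $a_j \neq 0$ and $j \leq e$ minimal. By the nondegeneracy of the pairing $A_j \times A_{d-j} \to A_d$, there exists $b \in A_{d-j}$ with $a_j b \neq 0$ in $A_d$. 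For $i > j$, $a_i b$ lies in degree $i + (d - j) > d$ and is therefore zero, so $f(a,b) = \pi(ab) = a_j b \neq 0$; the symmetric argument on the right also works.

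With $f$ in hand, Theorem \ref{lem:twistF} applies directly, and it produces a nondegenerate associative bilinear form on the Morita--Takeuchi transport $\widetilde{A}$, so $\widetilde{A}$ is Frobenius, completing the proof. The only potentially subtle step is verifying nondegeneracy of $f$, but as sketched above this follows cleanly from looking at the lowest nonzero homogeneous component of a given element and using that higher degree products vanish beyond degree $d$; no further obstacles are anticipated.
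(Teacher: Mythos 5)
Your proof is correct and is precisely the argument the paper intends: the paragraph preceding the corollary sets up exactly the graded nondegenerate pairing $A_i \times A_{d-i} \to A_d$ with $\dim A_d = 1$, so that $f = \pi \circ m$ is an $H$-colinear associative nondegenerate form valued in the one-dimensional subcomodule $D = A_d$, and \Cref{lem:twistF} applies. Your verification of nondegeneracy via the lowest nonzero homogeneous component is the right way to fill in the one detail the paper leaves implicit.
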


\section{Applications of 2-cocycle twists}
\label{sec:applications}

In this last section, we study some applications of 2-cocycle twists, particularly for AS-regular algebras. Throughout, let $H$ be any Hopf algebra and $\sigma$ be a right 2-cocycle on $H$. We employ the monoidal equivalence 
\begin{align*}
  F : \mathrm{comod}(H)~&\overset{\otimes}{\cong}~ \mathrm{comod}(H^\sigma)  \\
  U~&\mapsto~F(U)=:U_\sigma\notag
\end{align*}
from \eqref{eq:equiv2}, between right comodule categories over $H$ and $H^\sigma$, respectively. We write $\otimes$ and $\otimes_\sigma$ for the tensor products in the corresponding right comodule categories. Explicitly, since $H = H^\sigma$ as coalgebras, $F$ is given by the identity functor on objects, together with an isomorphism of $H^\sigma$-comodules as in \eqref{eq:monoidalstructure}
\begin{align*} 
\xi_{U,V}: F(U \otimes V )~&\xrightarrow{\sim}~ F(U) \otimes_\sigma F(V)\\
u\otimes v~&\mapsto~ \sum \sigma^{-1}(u_1,v_1)\,u_0\otimes v_0,\notag
\end{align*} 
where the following diagram 
\begin{align}\label{eq:cdsigma}
\xymatrix{
F(U\otimes V\otimes W) \ar[rr]^-{\xi_{U\otimes V,W}}\ar[d]_-{\xi_{U, V\otimes W}} && F(U \otimes V) \otimes_\sigma F(W) \ar[d]^-{\xi_{U,V}\otimes \id}\\
F(U) \otimes_\sigma F(V \otimes W) \ar[rr]^-{\id \otimes \xi_{V,W}} && F(U) \otimes_\sigma F(V) \otimes_\sigma F(W) 
}
\end{align}
commutes for any right $H$-comodules $U,V,W$. Note that $F$ sends a connected graded $H$-comodule algebra $A$ to the twisted connected graded $H^\sigma$-comodule algebra $F(A)=A_\sigma$. Since $F$ is the identity functor, for any $H$-comodule $V$, we may identify $F(V^{\otimes m}) = F(V)^{\otimes_\sigma m} = V^{\otimes m}$ as vector spaces.

\subsection{Twisting of generalized superpotential algebras}
\label{subsec:superpotential}

Superpotential algebras and their associated universal quantum groups have attracted significant interest in recent years (see e.g., \cite{BDV13, Chirvasitu-Walton-Wang2019, Dubois-Violette2005, DVM, DVL1990, WaltonWang2016}). We are particularly interested in superpotential algebras since all $N$-Koszul AS-regular algebras admit superpotentials \cite[Theorem 11]{DVM}. In this subsection, we consider a generalization of superpotential algebras, namely, derivation quotient algebras, and determine their behavior under 2-cocycle twists arising from coactions of arbitrary Hopf algebras.

Suppose $V$ is a finite-dimensional right $H$-comodule. The vector space dual $V^*={\rm Hom}_\kk(V,\kk)$ is again a right $H$-comodule by using the antipode map $S$ of $H$. We can view $V^*$ as the left dual of $V$ in the category ${\rm comod}(H)$ of right $H$-comodules, where the canonical evaluation map ${\rm ev}: V^*\otimes V\to \kk$ and coevaluation map ${\rm coev}: \kk \to V\otimes V^*$ are both $H$-comodule maps.

\begin{defn}
\label{defn:derivation}
Let $N$ and $m$ be integers with $2\leq N\leq m$. Let $V$ be a finite-dimensional right comodule over a Hopf algebra $H$ with a subcomodule $j: W \hookrightarrow V^{\otimes m}$. For any integer $1\leq i\leq m$, we define 
\begin{itemize}
    \item[(1)] subcomodules $\partial^iW$ of $V^{\otimes (m-i)}$ such that 
    \begin{align*} { \partial^1W = {\rm Im}\left(V^*\otimes W\xrightarrow{\id \otimes j} V^*\otimes (V^{\otimes m})\xrightarrow{\cong } (V^*\otimes V)\otimes V^{\otimes (m-1)} \xrightarrow{{\rm ev}\otimes \id} V^{\otimes (m-1)}\right) } \end{align*} 
    and $\partial^{i+1}W=\partial(\partial ^iW) $, inductively, and  
    \item[(2)] the $(m-N)$-th order \emph{derivation-quotient algebra} is given by \[A(W,N):=TV/(\partial^{m-N}W),\] where $TV$ is the tensor algebra of $V$.
\end{itemize}
\end{defn}

When $H=\kk$, \Cref{defn:derivation} yields the same derivation-quotient algebras discussed in \cite[\S 1.5.1]{Mori-Smith2016}. In particular, when $H=\kk$ and $W$ is one-dimensional, $A(W,N) $ is the superpotential algebra discussed in \cite{Dubois-Violette2005}. 
Next, we describe how $A(W,N)$ behaves under twisting by a $2$-cocycle $\sigma$ on $H$. When $\sigma$ is derived from a graded automorphism of $A(W,N)$ as in \Cref{Introthm:2cycleZhang}, we recover results of \cite{Mori-Smith2016} in terms of Zhang twists.            

\begin{lemma}
Let $2\leq N\leq m$ be integers. Let $V$ be a finite-dimensional right $H$-comodule with a subcomodule $W\subseteq V^{\otimes m}$. Then $A(W, N)$ is a right $H$-comodule algebra induced by the right $H$-comodule structure on $V$.
\end{lemma}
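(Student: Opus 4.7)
The plan is to build up the comodule-algebra structure in three layers: first on the tensor algebra, second on the relation space, and third on the quotient.

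First, I would observe that the tensor algebra $TV = \bigoplus_{k \geq 0} V^{\otimes k}$ is a right $H$-comodule algebra in a canonical way. Each $V^{\otimes k}$ is a right $H$-comodule via the iterated tensor product of comodules (using the standard diagonal coaction $v_1 \otimes \cdots \otimes v_k \mapsto \sum (v_1)_0 \otimes \cdots \otimes (v_k)_0 \otimes (v_1)_1 \cdots (v_k)_1$), and the multiplication $V^{\otimes i} \otimes V^{\otimes j} \to V^{\otimes (i+j)}$ is the identity map on the underlying vector space and hence tautologically $H$-colinear. The unit map $\kk \to TV$ is colinear via the trivial coaction. So $TV$ is a graded right $H$-comodule algebra.

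Next, I would verify that $\partial^{m-N} W$ is an $H$-subcomodule of $V^{\otimes N}$. By \Cref{defn:derivation}, $\partial W$ is defined as the image of the composition
\[
V^* \otimes W \xrightarrow{\id \otimes j} V^* \otimes V^{\otimes m} \xrightarrow{\cong} (V^* \otimes V) \otimes V^{\otimes(m-1)} \xrightarrow{\mathrm{ev} \otimes \id} V^{\otimes(m-1)}.
\]
Each of these maps is a morphism in $\mathrm{comod}(H)$: $V^*$ is an $H$-comodule (the left dual of $V$), $j$ is the inclusion of an $H$-subcomodule by hypothesis, the associativity isomorphism is colinear, and $\mathrm{ev}: V^* \otimes V \to \kk$ is colinear because $V^*$ is the left dual of $V$ in $\mathrm{comod}(H)$. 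Therefore the image $\partial W$ is an $H$-subcomodule of $V^{\otimes(m-1)}$. Iterating this argument gives that $\partial^{i} W \subseteq V^{\otimes(m-i)}$ is an $H$-subcomodule for every $1 \leq i \leq m-N$, and in particular $\partial^{m-N} W \subseteq V^{\otimes N}$ is an $H$-subcomodule.

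Third, the two-sided ideal $I := (\partial^{m-N} W)$ in $TV$ can be written as
\[
I \;=\; \sum_{i,j \geq 0} V^{\otimes i} \otimes \partial^{m-N} W \otimes V^{\otimes j},
\]
which is a sum of tensor products of $H$-subcomodules inside $TV$; each summand is therefore an $H$-subcomodule, and so $I$ is an $H$-subcomodule of $TV$. Since $I$ is also a two-sided ideal, the quotient $A(W, N) = TV / I$ inherits both an algebra structure and a right $H$-comodule structure from $TV$, and these are compatible (the quotient map is a morphism of comodule algebras). Thus $A(W, N)$ is a right $H$-comodule algebra.

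I do not expect any genuine obstacle: the only subtlety worth stating carefully is the colinearity of the evaluation map $\mathrm{ev}: V^* \otimes V \to \kk$, which relies on taking $V^*$ with the right coaction twisted by the antipode $S$ so that $V^*$ serves as the left dual of $V$ in $\mathrm{comod}(H)$. Once this is invoked, every other step is a formal consequence of the fact that $\mathrm{comod}(H)$ is a monoidal category and that the construction of $\partial^i W$ is built entirely from morphisms in this category.
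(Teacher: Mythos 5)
Your proof is correct and follows the same route as the paper: the paper's own argument simply notes that $\partial^{m-N}W$ is an $H$-subcomodule of $V^{\otimes N}$ (by the definition of the derivation operation together with the preceding discussion that $V^*$ is the left dual of $V$ in $\mathrm{comod}(H)$, so that $\mathrm{ev}$ is colinear) and then declares the passage to the quotient algebra of $TV$ to be straightforward. You have merely written out the details the paper leaves implicit, namely the canonical comodule-algebra structure on $TV$ and the fact that the two-sided ideal generated by an $H$-subcomodule is itself an $H$-subcomodule.
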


\begin{proof}
By \Cref{defn:derivation} and the discussion above, we know $\partial^{m-N}(W)\subseteq V^{\otimes N}$ is an $H$-subcomodule. It is then straightforward to check that $A(W, N)=TV/(\partial^{m-N}W)$, as a quotient algebra of $TV$, is again a right $H$-comodule algebra. 
\end{proof}

\begin{defn}
\label{def:twistform}
Retain the above notation.  
\begin{itemize}
    \item[(1)] For any $v\in V^{\otimes m}$, we define a new element 
\begin{align*}
v_{\sigma}:=\left((\id_{V^{\otimes (m-2)}}\otimes \xi_{V,V})\circ\cdots\circ(\id_V\otimes \xi_{V, V^{\otimes(m-2)}})\circ \xi_{V,V^{\otimes (m-1)}}\right)(v)
\end{align*}
in $V^{\otimes m}$; and
\item[(2)] for any subspace $W\subseteq V^{\otimes m}$, we define a new subspace 
\begin{align*}
W_{\sigma}:&=\left((\id_{V^{\otimes (m-2)}}\otimes \xi_{V,V})\circ\cdots\circ(\id_V\otimes \xi_{V, V^{\otimes(m-2)}})\circ \xi_{V,V^{\otimes (m-1)}}\right)(W)\notag\\
&=\{w_{\sigma}\,|\, w\in W\}
\end{align*}
of $V^{\otimes m}$.
\end{itemize}
\end{defn}

\begin{remark}
Let $R$ be a subspace of $V^{\otimes m}$ and consider the connected graded algebra $A=TV/(R)$ subject to $m$-homogeneous relations. For any graded automorphism $\phi$ of $A$, by \Cref{lem:2cocycleGrade}, the Zhang twist $A^\phi$ can be realized as a 2-cocycle twist $A_\sigma$, where $\sigma$ is a right 2-cocycle on Manin's universal quantum group $\underline{\rm aut}^r(A)$ determined by a twisting pair. Moreover, we can show that the natural isomorphism $$\xi_{V^{\otimes i},V^{\otimes j}}: F(V^{\otimes i}\otimes V^{\otimes j})\to F(V^{\otimes i})\otimes_\sigma F(V^{\otimes j})$$ in \eqref{eq:monoidalstructure} is given by $\xi_{V^{\otimes i},V^{\otimes j}}=(\id_V)^{\otimes i}\otimes (\phi^{-1})^{\otimes j}$ for any $i,j\ge 0$. As a consequence, the twisted element of any $v\in V^{\otimes m}$ defined in \Cref{def:twistform}(1) is given by 
\[ v_\sigma= \left(\id_V\otimes (\phi^{-1}) \cdots (\phi^{-1})^{\otimes (m-1)}\right)(v).
\]
\end{remark}

As a consequence of the commutative diagram \eqref{eq:cdsigma}, there exists a unique $H^{\sigma}$-comodule isomorphism $F(V^{\otimes m})\to F(V)^{\otimes_\sigma m}$ given by all possible compositions of identity maps, $\xi_{V^{\otimes i},V^{\otimes j}}$, $\id_{V^{\otimes(m-i-j)}}$ and parentheses, which we denote 
\begin{align}\label{eq:coherence}
\xi_{V^{\otimes m}}:F(V^{\otimes m})\to F(V)^{\otimes_\sigma m}.
\end{align}
By abuse of notation, we view $\xi_{V^{\otimes m}}$ as an automorphism on the vector space $V^{\otimes m}$. Thus we can write $v_{\sigma}=\xi_{V^{\otimes m}}(v)$ and $W_{\sigma}=\xi_{V^{\otimes m}}(W)$ as in \Cref{def:twistform}. 
In view of \Cref{lem:2cocycleGrade}, the following result extends \cite[Lemma 5.1]{Mori-Smith2016} from Zhang twists to 2-cocycle twists.

\begin{lemma}
\label{lem:twistR}
Let $V$ be a finite-dimensional right $H$-comodule, $R$ be a subcomodule of $V^{\otimes m}$ for some $m\ge 2$, and $A=TV/(R)$. Then for any right $2$-cocycle $\sigma$ on $H$, we have $A_{\sigma}\cong TV/(R_{\sigma})$ as graded right $H^\sigma$-comodule algebras.
\end{lemma}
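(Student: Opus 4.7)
The plan is to construct an explicit isomorphism $\Psi \colon (TV)_\sigma \xrightarrow{\sim} TV$ of graded algebras (where $TV$ on the right is regarded as an $H^\sigma$-comodule algebra via $H = H^\sigma$ as coalgebras) and then show that $\Psi$ carries the defining ideal of $A_\sigma$ onto $(R_\sigma) \subseteq TV$.

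First, I would set $\Psi|_{V^{\otimes n}} := \xi_{V^{\otimes n}}$ for each $n \geq 0$, using the coherence isomorphism from \eqref{eq:coherence}. Since each $\xi_{V^{\otimes n}}$ is an $H^\sigma$-comodule isomorphism, $\Psi$ is automatically a grading-preserving $H^\sigma$-colinear bijection. The key point is that $\Psi$ intertwines the twisted product $\cdot_\sigma$ with the ordinary concatenation product of $TV$: for homogeneous $a \in V^{\otimes i}$ and $b \in V^{\otimes j}$, the coherence diagram \eqref{eq:cdsigma} factors $\xi_{V^{\otimes(i+j)}}$ on $V^{\otimes i} \otimes V^{\otimes j}$ as $(\xi_{V^{\otimes i}} \otimes \xi_{V^{\otimes j}}) \circ \xi_{V^{\otimes i}, V^{\otimes j}}$. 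Expanding $a \cdot_\sigma b = \sum \sigma(a_1, b_1) a_0 b_0$ from \eqref{def:twistp}, applying the explicit form of $\xi_{V^{\otimes i}, V^{\otimes j}}$ from \eqref{eq:monoidalstructure} (which introduces a factor of $\sigma^{-1}$), and then using coassociativity of the iterated $H$-coaction together with the convolution identity $\sum \sigma(x_{(1)}, y_{(1)})\, \sigma^{-1}(x_{(2)}, y_{(2)}) = \varepsilon(x)\varepsilon(y)$, the $\sigma$ and $\sigma^{-1}$ factors cancel and one is left with $\xi_{V^{\otimes i}}(a) \cdot \xi_{V^{\otimes j}}(b) = \Psi(a) \cdot \Psi(b)$.

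Second, I would identify $A_\sigma$ with $(TV)_\sigma / (R)$, where $(R) \subseteq TV$ denotes the two-sided ideal generated by $R$. Since $R$ is an $H$-subcomodule of $V^{\otimes m}$, the subspace $(R)$ is automatically an $H$-subcomodule and is a $\cdot_\sigma$-ideal of $(TV)_\sigma$, which in fact coincides with the $\cdot_\sigma$-ideal generated by $R$ in $(TV)_\sigma$: one inclusion is immediate from expanding $\cdot_\sigma$, while the reverse follows from the dual identity $ar = \sum \sigma^{-1}(a_1, r_1)\, a_0 \cdot_\sigma r_0$ (obtained by a calculation symmetric to the one in Step~1), together with its right-sided analogue for $rb$. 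Because $\Psi$ is an algebra isomorphism with $\Psi(R) = \xi_{V^{\otimes m}}(R) = R_\sigma$, it sends the $\cdot_\sigma$-ideal $(R) \subseteq (TV)_\sigma$ bijectively onto the $\cdot$-ideal $(R_\sigma) \subseteq TV$. Passing to quotients then yields the desired isomorphism $A_\sigma \cong TV/(R_\sigma)$ of graded $H^\sigma$-comodule algebras.

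The main technical obstacle lies in the first step: verifying that $\Psi$ is an algebra map requires simultaneously tracking iterated $H$-coactions on $V^{\otimes i} \otimes V^{\otimes j}$, applying the coherence of the monoidal structure $\xi$ from \eqref{eq:cdsigma}, and using the convolution inverse identity for $\sigma$. Once this Sweedler bookkeeping is in place, the ideal identification and passage to the quotient in the second step are essentially formal.
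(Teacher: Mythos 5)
Your proposal is correct, and its engine is the same as the paper's: the isomorphism is, degree by degree, the coherence map $\xi_{V^{\otimes n}}$ (equivalently its inverse, depending on direction), and the whole statement reduces to the identity $R_\sigma=\xi_{V^{\otimes m}}(R)$. The difference is in the packaging. You build a global linear map $\Psi=\bigoplus_n\xi_{V^{\otimes n}}\colon (TV)_\sigma\to TV$ and verify multiplicativity by hand via the Sweedler computation $\sum\sigma^{-1}(a_1,b_1)\,\sigma(a_2,b_2)=\varepsilon(a)\varepsilon(b)$, and you must then separately check that the ordinary ideal $(R)$ coincides with the $\cdot_\sigma$-ideal generated by $R$ before passing to quotients. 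The paper instead exploits the freeness of $TV$: it defines a graded $H^\sigma$-comodule \emph{algebra} map $f\colon TV\to (TV/(R))_\sigma$ by $f|_V=\id$, observes that $f|_{V^{\otimes\ell}}=\xi^{-1}_{V^{\otimes\ell}}$, concludes $f(R_\sigma)=R=0$ so that $f$ factors through $TV/(R_\sigma)$, and produces the inverse by running the symmetric construction with the left $2$-cocycle $\sigma^{-1}$ on $H^\sigma$. The paper's route gets multiplicativity and the ideal bookkeeping for free from the universal property (at the cost of still having to identify $f|_{V^{\otimes\ell}}$, which hides essentially your computation); your route makes every identification explicit, including the observation that $(R)$ is simultaneously an ideal for both products, which is a genuine point worth recording. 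Both are complete and correct.
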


\begin{proof}
We apply the monoidal equivalence functor $F: {\rm comod}(H)\xrightarrow{\sim} {\rm comod}(H^\sigma)$ described in \eqref{eq:equiv2} to the $H$-comodule embedding $j:R \hookrightarrow  V^{\otimes m}$. Thus we have 
\[
R_{\sigma}={\rm Im}\left(F(R)\xrightarrow{F(j)} F(V^{\otimes m})\xrightarrow{\xi_{V^{\otimes m}}} F(V)^{\otimes_\sigma m}\right).
\]
Therefore, $R_{\sigma}\subseteq F(V)^{\otimes_\sigma m}$ is an $H^{\sigma}$-subcomodule and hence $TV/(R_{\sigma})$ is a graded right $H^\sigma$-comodule algebra.
We have a graded $H^\sigma$-comodule algebra map $f: TV \to \left(TV/(R)\right)_{\sigma}$ satisfying $f|_V=\id$, since $TV$ is free. Note that $f|_{V^{\otimes \ell}}=\xi^{-1}_{V^{\otimes \ell}}$. Hence, 
\[
f(R_\sigma)=\xi^{-1}_{V^{\otimes \ell}}(R_\sigma)=\xi^{-1}_{V^{\otimes \ell}}\circ \xi_{V^{\otimes \ell}}(R)=R,
\]
which is $0$ in $\left(TV/(R)\right)_{\sigma}$, and so $f$ factors through $TV/(R_\sigma)$, which we still denote as $f: TV/(R_\sigma)\to \left(TV/(R)\right)_{\sigma}$.

Similarly, since $\sigma^{-1}$ is a right 2-cocycle of $H^{\sigma}$, we have a graded $H$-comodule algebra map $g: TV/(R) \to \left(TV/(R_\sigma)\right)_{\sigma^{-1}}$ satisfying $g|_V=\id$. It induces a graded $H^\sigma$-comodule algebra map 
\[
g_{\sigma}: (TV/(R))_{\sigma}\xrightarrow{g} \left((TV/(R_\sigma))_{\sigma^{-1}}\right)_{\sigma}=TV/(R_\sigma)
\]
which still satisfies $g_{\sigma}|_V=\id$. Now it is routine to check that $f$ and $g_{\sigma}$ are inverse to each other. 
\end{proof}

Our next result is a 2-cocycle version of \cite[Proposition 5.2]{Mori-Smith2016}, which was proven for Zhang twists.

\begin{proposition}
\label{prop:2.2.5}
Let $2\leq N\leq m$ be integers. Let $V$ be a finite-dimensional right $H$-comodule, $j:W \hookrightarrow V^{\otimes m}$ be an $H$-subcomodule, and $\sigma$ be a right 2-cocycle on $H$. Then
\begin{itemize}
    \item[(1)] $\partial^i(W_{\sigma})=(\partial^i W)_{\sigma}$;
    \item[(2)] If $A(W,N)=TV/(R)$, where $R\subseteq V^{\otimes N}$ is some $H$-subcomodule, then $A(W_{\sigma},N)=TV/(R_{\sigma})$;
    \item[(3)] $A(W_{\sigma},N)\cong A(W,N)_{\sigma}$.
\end{itemize}
\end{proposition}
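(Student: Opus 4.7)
The plan is to establish (1) by induction on $i$, obtain (2) as the specialization $i=m-N$, and then derive (3) by applying \Cref{lem:twistR}.

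For (1), I will focus on the base case $i=1$; the inductive step follows by applying that case with $\partial^i W$ in place of $W$. The subspace $\partial^1 W \subseteq V^{\otimes(m-1)}$ is by definition the image of a composition of $H$-colinear morphisms built from $\id_{V^*}\otimes j$, the associator, and $\operatorname{ev}\otimes \id$. Since $(F,\xi^{-1})$ is a monoidal equivalence between $\comod(H)$ and $\comod(H^\sigma)$, it preserves left duals: there is a canonical identification of $F(V^*)$ with the left dual of $F(V)$ in $\comod(H^\sigma)$ under which $F(\operatorname{ev})$ corresponds, modulo $\xi$, to the canonical evaluation in $\comod(H^\sigma)$. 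Using this together with the naturality of $\xi$ and the coherence diagram \eqref{eq:cdsigma}, I will assemble a commutative diagram in $\comod(H^\sigma)$ identifying $\partial^1(W_\sigma) \subseteq V^{\otimes(m-1)}$ with $\xi_{V^{\otimes(m-1)}}(\partial^1 W) = (\partial^1 W)_\sigma$.

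Part (2) is then immediate from (1) at $i=m-N$: writing $R = \partial^{m-N}W \subseteq V^{\otimes N}$, one obtains $R_\sigma = (\partial^{m-N}W)_\sigma = \partial^{m-N}(W_\sigma)$, so
\[
A(W_\sigma, N) = TV/(\partial^{m-N}(W_\sigma)) = TV/(R_\sigma).
\]
For (3), I will apply \Cref{lem:twistR} to the presentation $A(W,N) = TV/(R)$ with $R=\partial^{m-N}W$. This yields $A(W,N)_\sigma \cong TV/(R_\sigma)$ as graded $H^\sigma$-comodule algebras, and combined with (2) gives $A(W_\sigma, N) \cong A(W,N)_\sigma$.

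The main obstacle will be the coherence bookkeeping in the base case of (1). The crux is that any monoidal functor sends dual pairs to dual pairs and preserves evaluation and coevaluation morphisms up to the canonical structure isomorphisms; once that ingredient is correctly invoked, the derivation construction commutes with $\sigma$-twisting and parts (2) and (3) are purely formal.
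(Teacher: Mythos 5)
Your proposal is correct and follows essentially the same route as the paper: part (1) rests on the fact that the monoidal equivalence preserves dual pairs and evaluation maps up to the structure isomorphisms $\xi$, and parts (2) and (3) are the same formal deductions from (1) and \Cref{lem:twistR}. The only difference is organizational—the paper proves (1) for general $i$ in a single commutative diagram by writing $\partial^i$ as one composite $(V^*)^{\otimes i}\otimes W \to V^{\otimes(m-i)}$, whereas you induct on $i$ from the case $i=1$; both are fine.
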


\begin{proof}
(1): One checks that 
\begin{align*}
\partial^i(W_{\sigma}) &= {\rm Im}\left((F(V)^*)^{\otimes_\sigma i}\otimes_{\sigma} F(W)\xrightarrow{\id \otimes_{\sigma} F(j)}(F(V)^*)^{\otimes_\sigma i}\otimes_{\sigma} F(V^{\otimes m})\right. \\
&\qquad \qquad \xrightarrow{\id \otimes_{\sigma} \xi_{V^{\otimes m}}}(F(V)^*)^{\otimes_\sigma i}\otimes_{\sigma} F(V)^{\otimes_\sigma m}\\
&\qquad \qquad \xrightarrow{\cong }((F(V)^*)^{\otimes_\sigma i}\otimes_{\sigma} F(V)^{\otimes_\sigma i})\otimes_{\sigma} F(V)^{\otimes_{\sigma} (m-i)}\left.\xrightarrow{{\rm ev}\otimes \id} F(V)^{\otimes_\sigma (m-i)} \right)   
\end{align*}
and 
\begin{align*}
    (\partial^i W)_{\sigma} &= {\rm Im}\left(F((V^*)^{\otimes i}\otimes W)\xrightarrow{F(\id \otimes j)}F((V^*)^{\otimes i}\otimes V^{\otimes m})\right. \\
    &\qquad \qquad \xrightarrow{F(\cong)}F(((V^*)^{\otimes i}\otimes V^{\otimes i})\otimes V^{\otimes (m-i)})\xrightarrow{F({\rm ev}\otimes \id)}F(V^{\otimes (m-i)})\\
    &\qquad \qquad \left.\xrightarrow{\xi_{V^{\otimes (m-i)}}}F(V)^{\otimes_\sigma (m-i)}\right).
\end{align*}
Note that there is a natural isomorphism $\iota:(F(V)^*)^{\otimes_\sigma i} \cong F(V^*)^{\otimes_\sigma i}\cong F((V^*)^{\otimes i})$. Then the two above images are equal by the following commutative diagram:
\[
\xymatrix{ 
(F(V)^*)^{\otimes_\sigma  i} \otimes_\sigma F(W) \ar[r]^-{\iota\otimes\id} \ar[d]_-{\id \otimes_{\sigma}\, F(j)} &F\left( (V^*)^{\otimes i}\right) \otimes_\sigma F(W)\ar[d]^-{\xi^{-1}_{(V^*)^{\otimes i},W}}\\
(F(V)^*)^{\otimes_\sigma i} \otimes_\sigma F(V^{\otimes m }) \ar[d]_-{\id \otimes_\sigma\, \xi_{V^{\otimes m}}}  & F\left( (V^*)^{\otimes i} \otimes W \right) \ar[d]^-{F(\id \otimes j)} \\
F(V^*)^{\otimes_\sigma i} \otimes_\sigma F(V)^{\otimes_\sigma m }\ar[d]_-{\cong } & F\left( (V^*)^{\otimes i} \otimes V^{\otimes m} \right) \ar[d]^-{F(\cong)}\\
\left((F(V)^*)^{\otimes_\sigma i} \otimes_\sigma F(V)^{\otimes_\sigma i }\right)\otimes_\sigma F(V)^{\otimes_\sigma (m-i)}\ar[d]_-{{\rm ev} \otimes_{\sigma}\, \id} & F\left(((V^*)^{\otimes i} \otimes V^{\otimes i}) \otimes V^{\otimes (m-i)}\right)\ar[d]^-{F({\rm ev}\otimes \id)} \\
F(V)^{\otimes_\sigma (m-i)} & F(V^{\otimes (m-i)}). \ar[l]^-{\xi_{V^{\otimes (m-i)}}}
}
\]

(2): If $A(W,N)=TV/(R)$, then $R=\partial^{m-N}W$ by definition. Thus by (1), we have
\[
A(W_\sigma,N)=TV/(\partial^{m-N}W_\sigma)=TV/((\partial^{m-N}W)_\sigma)=TV/(R_\sigma).
\] 

(3): Denote by $A(W,N)=TV/(R)$, where $R=\partial^{m-N}W$. It follows from (2) and \Cref{lem:twistR} that
\[
A(W_\sigma,N)=TV/(R_\sigma)\cong(TV/(R))_{\sigma}=A(W,N)_{\sigma}. \qedhere
\]
\end{proof}

Let $V={\rm Span}_\kk(v_1,\ldots,v_n)$ be a right $H$-comodule via $\rho: v_i\mapsto \sum_{j=1}^nv_j\otimes h_{ji}$ for some $h_{ji}\in H$. In the following result, we will view $V^*={\rm Span}_\kk(v^1,\ldots,v^n)$ as a left $H$-comodule via 
\begin{align}\label{RLcoaction}
\rho^*: v^i \mapsto \sum_{j=1}^n h_{ij}\otimes v^j.
\end{align}
For two right $H$-comodules $U$ and $V$, we define the invertible linear map 
\begin{align}\label{def:adjointF}
    \zeta_{U^*,V^*}: U^*\otimes V^*&\to U^*\otimes V^* \\
    f\otimes g&\mapsto \sum \sigma^{-1}(f_{-1},g_{-1})\, f_0g_0, \notag
\end{align}
for any $f\in U^*$ and $g\in V^*$, with inverse $\zeta^{-1}_{U^*,V^*}: f\otimes g\mapsto \sum \sigma(f_{-1},g_{-1})f_0g_0$, where $U^*$ and $V^*$ are viewed as left $H$-comodules as above. We denote by 
\begin{align}\label{eq:coherenceE}
\zeta_{(V^*)^{\otimes m}}: (V^*)^{\otimes m}\to (V^*)^{\otimes m}
\end{align}
the unique invertible linear map given by all possible compositions of  $\id_{{V^*}^{\otimes(m-i-j)}}$, $\zeta_{(V^*)^{\otimes i},(V^*)^{\otimes j}}$, and parentheses. 

Let $m\ge 2$ be any integer and $R\subseteq V^{\otimes m}$ be a right $H$-subcomodule. The connected graded algebra $A=TV/(R)$ is a right $H$-comodule algebra with the induced right $H$-coaction on $V$.  We denote  by $R^\perp=\{f\in (V^*)^{\otimes m}\,|\, f(r)=0 \text{ for all } 
 r\in R \}$ and the dual algebra by $A^!=TV^*/(R^\perp)$. By a higher degree version of \cite[Proposition 2.5]{CWZ2014} (also see \Cref{lemma:bialgebraM}), $A^!$ is a left $H$-comodule algebra with the left $H$-coaction induced by \eqref{RLcoaction}. In particular, $R^\perp\subseteq (V^*)^{\otimes m}$ is a left $H$-subcomodule. Recall that for any right 2-cocycle $\sigma$ on $H$, $\sigma^{-1}$ is a left 2-cocycle on $H$ and one can define the twisted algebra $\,_{\sigma^{-1}}(A^!)$ on the left.

\begin{proposition}
\label{lem:dualtwist}
Retain the above notation. Then we have
\[
\left(A_{\sigma}\right)^!~\cong  \,_{\sigma^{-1}}(A^!).  
\]
as left $H^\sigma$-comodule algebras. 
\end{proposition}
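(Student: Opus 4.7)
The plan is to realize both $(A_\sigma)^!$ and ${}_{\sigma^{-1}}(A^!)$ as explicit quotients of the tensor algebra $TV^*$ and then to identify the two defining subspaces via a linear-algebra duality between $\zeta$ and $\xi$. By Lemma~\ref{lem:twistR}, $A_\sigma \cong TV/(\xi_{V^{\otimes m}}(R))$ as right $H^\sigma$-comodule algebras, so
\[
(A_\sigma)^! \cong TV^*/\bigl(\xi_{V^{\otimes m}}(R)^\perp\bigr)
\]
as a left $H^\sigma$-comodule algebra, with the coaction on $V^*$ given by~\eqref{RLcoaction} applied to the right $H^\sigma$-coaction on $V$. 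Next I would prove a left-comodule analog of Lemma~\ref{lem:twistR}: since $\sigma^{-1}$ is a left 2-cocycle on $H$ and left-twisting $H$ by $\sigma^{-1}$ gives the same Hopf algebra $H^\sigma$, mimicking the proof of Lemma~\ref{lem:twistR} yields
\[
{}_{\sigma^{-1}}(A^!) \cong TV^*/\bigl(\zeta^{-1}_{(V^*)^{\otimes m}}(R^\perp)\bigr)
\]
as graded left $H^\sigma$-comodule algebras. Concretely, the universal algebra surjection $TV^* \twoheadrightarrow {}_{\sigma^{-1}}(A^!)$ sends $v^{i_1} \otimes \cdots \otimes v^{i_m}$ to the twisted product $v^{i_1} \cdot_{\sigma^{-1}} \cdots \cdot_{\sigma^{-1}} v^{i_m}$, which by iterating~\eqref{def:adjointF} equals the image of $\zeta_{(V^*)^{\otimes m}}(v^{i_1} \otimes \cdots \otimes v^{i_m})$ under the quotient to $A^!$, so its kernel in degree $m$ is $\zeta^{-1}_{(V^*)^{\otimes m}}(R^\perp)$.

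The heart of the argument is then the linear-algebra identity
\[
\zeta_{(V^*)^{\otimes m}} \;=\; \bigl(\xi_{V^{\otimes m}}\bigr)^T,
\]
where the transpose is taken with respect to the standard perfect pairing $(V^*)^{\otimes m} \otimes V^{\otimes m} \to \kk$. For $m = 2$ this reduces to a direct basis computation: writing $\rho(v_i) = \sum_j v_j \otimes h_{ji}$ and $\rho^*(v^a) = \sum_p h_{ap} \otimes v^p$ as in~\eqref{RLcoaction}, both $\langle \zeta_{V^*, V^*}(v^a \otimes v^b),\, v_i \otimes v_k\rangle$ and $\langle v^a \otimes v^b,\, \xi_{V,V}(v_i \otimes v_k)\rangle$ evaluate to $\sigma^{-1}(h_{ai}, h_{bk})$ via~\eqref{eq:monoidalstructure} and~\eqref{def:adjointF}. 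For general $m$, I would proceed by induction on the number of iterations used to build $\xi_{V^{\otimes m}}$ in~\eqref{eq:coherence} and $\zeta_{(V^*)^{\otimes m}}$ in~\eqref{eq:coherenceE}, using that transpose reverses the order of composition and commutes with $\id \otimes (-)$, together with the coherence of the natural isomorphisms $\xi$ and $\zeta$ (which makes the iterated maps independent of the chosen bracketing).

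Granting this identity, standard duality gives
\[
\xi_{V^{\otimes m}}(R)^\perp \;=\; (\xi_{V^{\otimes m}}^T)^{-1}(R^\perp) \;=\; \zeta^{-1}_{(V^*)^{\otimes m}}(R^\perp),
\]
so the two quotients $(A_\sigma)^!$ and ${}_{\sigma^{-1}}(A^!)$ coincide as graded algebras inside $TV^*$. The resulting isomorphism is the identity on the degree-one piece $V^*$, where both sides carry the same left $H^\sigma$-comodule structure from~\eqref{RLcoaction}, and hence extends uniquely to an isomorphism of left $H^\sigma$-comodule algebras. The main obstacle I anticipate is the verification of $\zeta_{(V^*)^{\otimes m}} = \xi_{V^{\otimes m}}^T$ for $m > 2$: since transposition produces the reversed order of iterated compositions, matching it against the specific iteration prescribed in~\eqref{eq:coherence} and~\eqref{eq:coherenceE} requires a careful application of coherence. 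The $m=2$ basis computation and the left analog of Lemma~\ref{lem:twistR} are otherwise routine.
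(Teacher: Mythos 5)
Your proposal is correct and follows essentially the same route as the paper's proof: reduce via Lemma~\ref{lem:twistR} (and its left-sided analogue) to comparing the relation spaces, then use the adjoint identity $\zeta_{(V^*)^{\otimes m}}=\bigl(\xi_{V^{\otimes m}}\bigr)^{*}$ under the canonical pairing to conclude $(R_\sigma)^\perp=\zeta^{-1}_{(V^*)^{\otimes m}}(R^\perp)$. The paper simply asserts this adjoint identity with ``one checks,'' whereas you spell out the $m=2$ basis verification and the coherence/induction step; the substance is the same.
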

\begin{proof}
In view of \Cref{lem:twistR}, we have
\[
\left(A_{\sigma}\right)^!=\left((TV/(R))_{\sigma}\right)^!\cong \left(TV/(R_\sigma)\right)^!=TV^*/((R_\sigma)^\perp).
\]
By the canonical pairing \[\langle-,-\rangle:
(V^*)^{\otimes m} \times V^{\otimes m}\to \kk,\] 
one checks that $\zeta_{(V^*)^{\otimes m}}=\left(\xi_{V^{\otimes m}}\right)^*$. Note that $(R_\sigma)^\perp$ is defined as $\langle (R_\sigma)^\perp,R_\sigma\rangle=0$, which implies that \begin{align*}
0 &= \langle (R_\sigma)^\perp,\,R_\sigma\rangle\\
\overset{\eqref{eq:coherence}}&{=} \langle (R_\sigma)^\perp,\, \xi_{V^{\otimes m}}(R)\rangle \\
&= 
\langle (\xi_{V^{\otimes m}})^*(R_\sigma)^\perp,\,  R\rangle\\
&= \langle  \zeta_{(V^*)^{\otimes m}}(R_\sigma)^\perp,\,  R\rangle.
\end{align*}
Thus, $(R_\sigma)^\perp=(\zeta_{(V^*)^{\otimes m}})^{-1}\,(R^\perp) = \zeta^{-1}_{(V^*)^{\otimes m}}\,(R^\perp)$. Then we get
\[
\left(A_{\sigma}\right)^! \cong TV^*/((R_\sigma)^\perp) = TV^*/(\zeta^{-1}_{(V^*)^{\otimes m}}\,(R^\perp)) \cong \,_{\sigma^{-1}}  (TV^*/(R^\perp))= \,_{\sigma^{-1}} (A^!),
\] 
where the last isomorphism follows by a left-sided version of \Cref{lem:twistR}.
\end{proof}

\subsection{Twisting of AS-regular algebras}
\label{subsec:twist AS-reg}

In this subsection, we study 2-cocycle twists of $N$-Koszul AS-regular algebras that are comodule algebras over a Hopf algebra $H$ and show that 2-cocycle twists preserve the AS-regular property. This allows one to remove the noetherianity assumption necessary in \Cref{thm:Torreg}. When $H$ is finite-dimensional, a similar result was proved in \cite[Lemma 4.12]{CKWZ} (for semisimple) and in \cite[Theorem 4.8]{CS2}. Here, we mainly focus on the $H$-coactions when $H$ is infinite-dimensional. 

Let $H$ be any Hopf algebra, $A=\bigoplus_{i\ge 0}A_i$ be a connected graded right $H$-comodule algebra, and $\sigma$ be any right 2-cocycle on $H$. We denote by $A_\sigma=\bigoplus_{i\ge 0}(A_i)_\sigma$ the twisted algebra, which is a connected graded $H^\sigma$-comodule algebra via the monoidal equivalence \eqref{eq:equiv2}.

\begin{thm}
\label{thm:AS}
Let $A$ be an $N$-Koszul and AS-regular algebra of dimension $d$. Let $H$ be any Hopf algebra that right coacts on $A$ preserving the grading of $A$. Then for any right 2-cocycle $\sigma$ on $H$, the twisted algebra $A_{\sigma}$ is still $N$-Koszul and AS-regular of the same dimension $d$.
\end{thm}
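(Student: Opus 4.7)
The plan is to combine the Morita--Takeuchi invariance results already established in Section 3 with the special finiteness properties enjoyed by $N$-Koszul algebras, thereby bypassing the noetherian hypothesis that appears in \Cref{thm:Torreg}(5). Writing $K=H^\sigma$ and $F=\widetilde{(-)}: \comod(H) \stackrel{\otimes}{\cong} \comod(K)$ for the monoidal equivalence from \eqref{eq:equiv2}, I will show the $N$-Koszul AS-regular property of dimension $d$ transfers from $A$ to $\widetilde{A}=A_\sigma$ along $F$.

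First, by \Cref{Cor:gldim}, the $N$-Koszul property and the global dimension $d$ are already preserved, so $A_\sigma$ satisfies the global dimension condition (AS1) and remains $N$-Koszul. It therefore remains to verify the Gorenstein condition (AS2) for $A_\sigma$, namely that $\underline{\Ext}_{A_\sigma}^i(\kk,A_\sigma)$ vanishes for $i\ne d$ and is 1-dimensional concentrated in a single degree for $i=d$. The key observation is that, because $A$ is $N$-Koszul of global dimension $d$, the trivial module $\kk$ admits a minimal graded free resolution
\[
0\ \to\ A\otimes V_d\ \to\ A\otimes V_{d-1}\ \to\ \cdots\ \to\ A\otimes V_1\ \to\ A\ \to\ \kk\ \to\ 0,
\]
in which each $V_i$ is finite-dimensional and concentrated in a single graded degree. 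The $H$-coaction on $A$ makes the whole resolution live in $\,_{\Gr(A)}\mathcal{M}^{H}$: minimality together with uniqueness of syzygies up to isomorphism forces the vector spaces $V_i$ to inherit compatible right $H$-comodule structures, since syzygies of relative modules are relative modules and finite-dimensionality gives finite-dimensional comodules. Hence $\kk$ has a resolution by finite free graded relative $(A,H)$-modules, which is exactly the hypothesis needed to apply \Cref{lem:moduleMTE} and the argument of \Cref{thm:tensor-twist} \emph{without} requiring $A$ to be noetherian.

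Applying the exact monoidal equivalence $F$ term-by-term yields a resolution of $F(\kk)=\kk$ by finite free graded relative $(A_\sigma,K)$-modules of the form $A_\sigma \otimes_\sigma \widetilde{V_i}$. Using this resolution to compute $\underline{\Ext}_{A_\sigma}^i(\kk,A_\sigma)$, and appealing to the relative-Hom isomorphism from \Cref{lem:moduleMTE} at each term (which only requires the existence of a finite free presentation, not noetherianity), one obtains a natural isomorphism
\[
\widetilde{\underline{\Ext}_A^{i}(\kk,A)}\ \cong\ \underline{\Ext}_{A_\sigma}^{i}(\kk,A_\sigma)
\]
of graded $K$-comodules, exactly as in the proof of \Cref{thm:tensor-twist}. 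Since $F$ is a $\kk$-linear monoidal equivalence, it preserves graded dimensions and sends one-dimensional $H$-comodules to one-dimensional $K$-comodules. The Gorenstein condition $\underline{\Ext}_A^i(\kk,A)=\delta_{i,d}\,\kk(l)$ for $A$ therefore transfers to $A_\sigma$ with the same AS-index $l$, completing the proof.

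The main technical obstacle is the careful justification that the minimal free resolution of $\kk$ can be lifted to the category of graded relative $(A,H)$-modules with finite-dimensional $H$-comodule layers $V_i$; this is where $N$-Koszulity pays off, because the $V_i$ are finite-dimensional by degree considerations and the coaction of $H$ on each syzygy is induced canonically from that on the preceding term. Once this lifting is in hand, the Morita--Takeuchi machinery of \Cref{lem:moduleMTE} applies verbatim, and the argument of \Cref{thm:Torreg}(5) goes through without noetherian hypotheses. (The assumption that $H$ has bijective antipode, needed to invoke \Cref{lem:relHom} in its stated form, can either be imposed or circumvented by running the argument with right graded relative modules, as indicated in the remark following \Cref{lem:relHom}.)
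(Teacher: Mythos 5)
Your argument is essentially correct, but it follows a genuinely different route from the paper's. The paper never touches the Gorenstein condition directly: after invoking \Cref{Cor:gldim}(2) for $N$-Koszulity, it passes to the Ext-algebra $E(A)$, which for an $N$-Koszul algebra is realized explicitly on the underlying space of the Koszul dual $A^!$ with a sign-twisted product, uses the Lu--Palmieri--Wu--Zhang criterion that a connected graded algebra is AS-regular if and only if its Ext-algebra is Frobenius, identifies $E(A_\sigma)\cong {}_{\sigma^{-1}}E(A)$ via the Koszul-dual twisting result \Cref{lem:dualtwist}, and concludes from the Morita--Takeuchi invariance of the Frobenius property (\Cref{cor:Frobconn}). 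You instead verify (AS1) and (AS2) directly by transporting $\underline{\Ext}^i_A(\kk,A)$ across the monoidal equivalence, observing that $N$-Koszulity supplies the finite free relative resolution of $\kk$ that \Cref{thm:tensor-twist} would otherwise extract from noetherianity. Both work; your route stays entirely inside the relative-module machinery of Section 3 and gives the preservation of the AS-index for free, while the paper's route avoids the resolution-lifting argument and any hypothesis on the antipode.

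Two points in your write-up should be tightened. First, the reason each $V_i$ is an $H$-comodule is not ``uniqueness of syzygies'' but degree concentration: by $N$-Koszulity the $i$-th syzygy $\Omega^i$ is generated in a single internal degree, so one may take $V_i$ to be that graded piece of $\Omega^i$, which is automatically a finite-dimensional $H$-comodule because every graded piece of a graded relative $(A,H)$-module is a comodule; the multiplication map $A\otimes V_i\to \Omega^i$ is then a surjection of relative modules whose kernel is the next syzygy, and the process stops after $d$ steps. Second, \Cref{lem:relHom} and \Cref{lem:moduleMTE} are stated assuming $H$ has a bijective antipode, which \Cref{thm:AS} does not; your proposed fix via right relative modules (per the remark after \Cref{lem:relHom}) is viable, but you should then either state the Gorenstein condition on the right or note that the left and right conditions agree here. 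The paper's Frobenius argument sidesteps this entirely.
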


\begin{proof}
By \Cref{Cor:gldim}(2), $B:=A_{\sigma}$ is $N$-Koszul. As a vector space, set 
\[
\mathscr{E}(A):=\bigoplus_{} (A^!)_{\delta(k)},
\]
where $\delta(k)=km/2$ if $k$ is even and $\delta(k)=(k-1)m/2+1$ if $k$ is odd. Define a multiplication on $\mathscr{E}(A)$ by
\[
f*g:=\begin{cases}
    (-1)^{ij} f \cdot g, & N=2\\
    f \cdot g, & N>2, \text{ either } i \text{ or } j \text{ even}\\
    0, & \text{else}
\end{cases}
\]
for $f$ and $g$ homogeneous of degrees $i$ and $j$, respectively. Here $f \cdot g$ refers to the multiplication of $f$ and $g$ in $A^!$. Likewise, define $\mathscr{E}(B)$. By \cite[Proposition 3.1]{BM06} and \cite[Proposition 2.3]{He-Lu2005}, the Ext-algebra $E(A)$ of $A$ is isomorphic to $\mathscr{E}(A)$, and likewise for $B$. By \cite[Corollary D]{LPWZ}, a connected graded algebra is AS-regular if and only if its $\Ext$-algebra is Frobenius. Therefore, it suffices to show that $E(B)$ is Frobenius whenever $E(A)$ is Frobenius. Since $A$ is a right $H$-comodule algebra, $A^!$ is a left $H$-comodule algebra with the left $H$-coaction induced by \eqref{RLcoaction}. Thus, $E(A)$ is a left $H$-comodule algebra. Moreover, using the definition of multiplication on $\mathscr{E}(A)$ and $\mathscr{E}(B)$ as above, one may check that \Cref{lem:dualtwist} implies that $E(B)\cong \!_{\sigma^{-1}} E(A)$ as algebras. Thus, by \Cref{cor:Frobconn}, $E(B)$ is Frobenius as desired.
\end{proof}

\subsection{Nakayama automorphisms under 2-cocycle twists}
\label{subsec:Nakayama}

In this subsection, we describe the Nakayama automorphism of an $N$-Koszul AS-regular algebra $A$ under 2-cocycle twists. We first establish results for general Frobenius algebras, which we will apply to the case of the Frobenius Ext-algebra of $A$. 

Let $d \geq 0$ be an integer and $E=\bigoplus_{0 \leq m \leq d} \ E_m$ be a connected graded Frobenius algebra. Following the language of \cite[\S 3]{CWZ2014}, we describe the Nakayama automorphism $\mu_E$ in terms of a basis of each homogeneous piece $E_m$.  Since $E$ is Frobenius, the multiplication of $E$ yields a nondegenerate bilinear form
 \[ 
E_m \times E_{d-m}\to E_d
 \]
for each $0\leq m \leq d$ \cite[Lemma 3.2]{Smith1994}. In particular, $\dim E_d=\dim E_0=1$. Let $e$ be a nonzero element in $E_d$. Pick any basis $\{a_1,\ldots,a_n\}$ of an arbitrary homogeneous piece $E_{m}$. There is a unique basis of $E_{d-m}$, say $\{b_1,\ldots,b_n\}$, such that
\begin{equation}\label{eq:4:aibj}
a_ib_j = \delta_{ij}e
\end{equation}
for all $1\leq i,j\leq n$. Moreover, there is a unique new basis of $E_m$, say $\{c_1,\ldots,c_n\}$, such that
\begin{equation}\label{eq:4:bicj}
b_ic_j = \delta_{ij}e
\end{equation}
for all $1\leq i,j\leq n$. Therefore, there is a nonsingular matrix $(\alpha_{ij})_{1 \leq i,j \leq n}$ such that
\[
c_i = \sum_{j=1}^n \alpha_{ij}a_j
\]
for all $1\leq i\leq n$. Recall that the Nakayama automorphism $\mu_E$ is an automorphism of $E$ which satisfies \eqref{eq:Naka} 
\[
f(a,b)=f(b,\mu_E(a)),
\]
for any $a,b\in E$, where $f: E\times E\to \kk$ is an associative nondegenerate bilinear form. By this definition, when restricted to $E_m$, $\mu_E$ maps $a_i$ to $c_i$, 
\begin{equation} \label{eq:4:alpha}
\mu_E(a_i) = c_i= \sum_{j=1}^n\alpha_{ij}a_j,
\end{equation}
for all $1\leq i\leq n$. 

Now let $H$ be a Hopf algebra coacting on $E$ from the left via $\rho: E \to H\otimes E$, while preserving the grading of $E$. We use $\{y_{ij}\}_{1\leq i,j\leq n}$ to denote the set of elements in $H$ such that the left $H$-coaction on the homogeneous piece $E_m$ is given by
\begin{equation} \label{eq:4:yis}
\rho(a_i)=\sum_{s = 1}^n  y_{is} \otimes a_s,
\end{equation} 
for all $i$. From the comodule axioms, one can check that $\Delta(y_{ij})=\sum_{s=1}^n y_{is}\otimes y_{sj}$ and $\varepsilon(y_{ij})=\delta_{ij}$. For simplicity, we denote the matrix $(y_{ij})_{n\times n}$ by $\mathbb Y$. Similarly, we write 
\begin{equation} \label{eq:4:fis-gis}
\rho(b_i)=\sum_{s=1}^n f_{is}\otimes b_s, \qquad \text{ and } \qquad 
\rho(c_i)=\sum_{s=1}^n g_{is}\otimes c_s,
\end{equation} 
for elements $f_{is},g_{is}\in H$. Let $\mathbb F=(f_{ij})_{n\times n}$ and $\mathbb G=(g_{ij})_{n\times n}$ be the corresponding matrices. 

The following result was originally stated in \cite[Lemma 3.1]{CWZ2014} for the degree-one piece $E_1$ of $E$. Here, we restate it for any degree-$m$ piece $E_m$ of $E$ since a similar argument works for all homogeneous pieces. For any $n \times n$ matrix $\mathbb V=(v_{ij})$, we denote by $\mathbb V^T$ the transpose matrix of $\mathbb V$. When we apply the antipode map $S$ of $H$ on $\mathbb V$ entry-wise, we write $S(\mathbb V)=(S(v_{ij}))_{n \times n}$.

\begin{lemma}[{\cite[Lemma 3.1]{CWZ2014}}]\label{lem:coationIde}
Retain the above notation and let $\mathbb I$ be the $n\times n$ identity matrix. Let $g \in H$ be the homological codeterminant of the left $H$-coaction on $E$ such that $\rho(e)=g\otimes e$. We have the following identities:
\begin{itemize}
    \item[(1)] $\mathbb YS(\mathbb Y)=\mathbb I=S(\mathbb Y)\mathbb Y$;
    \item[(2)] $\mathbb GS(\mathbb G)=\mathbb I=S(\mathbb G)\mathbb G$;
    \item[(3)] $\mathbb Y\, \mathbb F^T=g\mathbb I$, and, as a consequence, $S(\mathbb Y)=\mathbb F^T(g^{-1}\mathbb I)$;
    \item[(4)] $S(\mathbb F)S(\mathbb Y^T)=g^{-1}\mathbb I$.
    \item[(5)] $S(\mathbb G)S(\mathbb F^T)=g^{-1}\mathbb I$, and, as a consequence, $S(\mathbb F^T)\cdot (g\mathbb I)=\mathbb G$;
    \item[(6)] $S^2(\mathbb Y)=(g\mathbb I)\cdot \mathbb G\cdot (g^{-1}\mathbb I)$;
    \item[(7)] $\mathbb G=\alpha \mathbb Y\alpha^{-1}$, where $\alpha=(\alpha_{ij})$.
\end{itemize}
 \end{lemma}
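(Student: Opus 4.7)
The plan is to reduce every identity to a straightforward manipulation of the comodule axioms together with the multiplicative relations \eqref{eq:4:aibj}, \eqref{eq:4:bicj} and the Nakayama formula \eqref{eq:4:alpha}, using that the homological codeterminant $g$ is group-like (so $S(g) = g^{-1}$ and it is invertible in $H$). The key input is that applying $\rho$ to a product uses the multiplication of $H$, while applying $\rho$ to $e$ just introduces the scalar $g$. Since both $\{a_i\}$ and $\{c_i\}$ are bases, the coaction coefficients $y_{ij}$ and $g_{ij}$ each satisfy $\Delta(y_{ij}) = \sum_s y_{is}\otimes y_{sj}$, $\varepsilon(y_{ij}) = \delta_{ij}$, and the same for $g_{ij}$; from this, the antipode axiom $m\circ(S\otimes\id)\circ\Delta = u\circ\varepsilon = m\circ(\id\otimes S)\circ\Delta$ applied entrywise immediately yields (1) and (2).

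For (3), I apply $\rho$ to $a_i b_j = \delta_{ij} e$: the right-hand side becomes $\delta_{ij}g\otimes e$, while the left-hand side equals $\sum_{s,t} y_{is}f_{jt}\otimes a_s b_t = \sum_s y_{is}f_{js}\otimes e$ after using \eqref{eq:4:aibj} again. Comparing coefficients of $e$ gives $\sum_s y_{is}f_{js} = \delta_{ij}g$, which is $\mathbb{Y}\mathbb{F}^T = g\mathbb{I}$; left-multiplying by $S(\mathbb{Y})$ using (1) yields the formula $S(\mathbb{Y}) = \mathbb{F}^T(g^{-1}\mathbb{I})$ (the scalar $g$ is inserted on the right since the derivation produces $f_{jk} = S(y_{kj})\,g$). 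Exactly the same argument applied to $b_ic_j = \delta_{ij}e$ gives the auxiliary identity $\mathbb{F}\mathbb{G}^T = g\mathbb{I}$, which I will use for (5). For (4) and (5), I apply $S$ entrywise to $\mathbb{Y}\mathbb{F}^T = g\mathbb{I}$ and $\mathbb{F}\mathbb{G}^T = g\mathbb{I}$ respectively; since $S$ is an anti-algebra map and $S(g) = g^{-1}$, the order of the two matrices flips and I obtain $S(\mathbb{F})S(\mathbb{Y}^T) = g^{-1}\mathbb{I}$ and $S(\mathbb{G})S(\mathbb{F}^T) = g^{-1}\mathbb{I}$; the consequence $\mathbb{G} = S(\mathbb{F}^T)(g\mathbb{I})$ then follows by left-multiplying the second equation by $\mathbb{G}$ and invoking (2).

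For (6), I combine steps: applying $S$ entrywise to the formula $(\mathbb{F}^T)_{ij} = S(y_{ij})\,g$ from (3) gives $(S(\mathbb{F}^T))_{ij} = g^{-1}\,S^2(y_{ij})$, i.e.\ $S(\mathbb{F}^T) = (g^{-1}\mathbb{I})\,S^2(\mathbb{Y})$; substituting into the consequence of (5) yields $\mathbb{G} = (g^{-1}\mathbb{I})\,S^2(\mathbb{Y})\,(g\mathbb{I})$, which rearranges to (6). Finally, (7) follows by applying $\rho$ to \eqref{eq:4:alpha}: the relation $\rho(c_i) = \sum_j \alpha_{ij}\rho(a_j)$ together with the expansion $\rho(c_i) = \sum_s g_{is}\otimes c_s = \sum_{s,t} g_{is}\alpha_{st}\otimes a_t$ gives $\alpha\mathbb{Y} = \mathbb{G}\alpha$ after comparing coefficients in the basis $\{a_t\}$.

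The arguments are all routine, but the one point of real care is tracking left-versus-right multiplication by the scalar $g\in H$: because $H$ is non-commutative, writing $g\mathbb{I}$ next to a matrix with $H$-valued entries is an asymmetric operation, and the placement determined by the derivation (not the other way around) is what makes (3)--(6) come out as stated. Beyond this bookkeeping, no new ideas beyond those in \cite{CWZ2014} are needed, and the extension from $E_1$ to an arbitrary $E_m$ requires only that one use the bases $\{a_i\},\{b_i\},\{c_i\}$ of the pieces $E_m,E_{d-m},E_m$ rather than of $E_1,E_{d-1},E_1$.
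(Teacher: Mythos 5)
Your proof is correct, and it takes exactly the route the paper intends: the paper does not reprove this lemma but cites \cite[Lemma 3.1]{CWZ2014} and remarks that the same argument for $E_1$ works for each $E_m$, and your computations (the antipode axiom applied to the comatrix identities for (1)--(2), $\rho$ applied to $a_ib_j=\delta_{ij}e$ and $b_ic_j=\delta_{ij}e$ for (3)--(6) using that $g$ is group-like so $S(g)=g^{-1}$, and $\rho$ applied to $c_i=\sum_j\alpha_{ij}a_j$ for (7)) are precisely that standard argument. Your care about the side on which the scalar $g$ multiplies the $H$-valued matrices is the right point to flag, and your formulas are consistent with the statement.
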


Recall that the Frobenius property is invariant under any Morita--Takeuchi equivalence (\Cref{lem:twistF}), in particular, under any 2-cocycle twists. We now provide a formula that shows how the Nakayama automorphism of a Frobenius algebra changes under a 2-cocycle twist. Here, we consider a left cocycle twist of $E$ by a left 2-cocycle. We will apply this result in \Cref{Cor:Naka} to describe the Nakayama automorphism of a twisted AS-regular algebra.

\begin{thm}
\label{lem:Nakatwist}
Retain the above notation and let $g$ be the homological codeterminant of the left $H$-coaction $\rho$ on $E$ such that $\rho(e)=g\otimes e$. For any left 2-cocycle $\sigma$ on $H$, the Nakayama automorphism $\mu_{\!_\sigma E}$ of the twisted Frobenius algebra $\,_\sigma E$ is given by $\mu_{\!_\sigma E}(a_i)=\sum_{1\leq j\leq n}\widetilde{\alpha_{ij}}a_j$, where  
\[
\widetilde{\alpha_{ij}}=\sum_{1\leq p,q,r,s,t\leq n}\sigma\left(y_{ip},S(y_{pq})g\right)\,\sigma\left(g^{-1}S^2(y_{st}),g\right)\,\sigma^{-1}\left(g^{-1}S^2(y_{rs}),S(y_{qr})g\right)\,\alpha_{tj}, \]
and $\mu_E(a_i) = \sum_{j=1}^n\alpha_{ij}a_j$, for all $1 \leq i,j \leq n$. 
In particular, if the $H$-coaction on $E$ has trivial homological codeterminant, then 
\[
\widetilde{\alpha_{ij}}=\sum_{1\leq p,q,r,s\leq n}\sigma\left(y_{ip},S(y_{pq})\right)\,\sigma^{-1}\left(S^2(y_{rs}),S(y_{qr})\right)\,\alpha_{sj}. \]
\end{thm}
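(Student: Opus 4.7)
The plan is to determine $\mu_{{}_\sigma E}$ on each homogeneous piece $E_m$ by explicitly constructing the two dual bases of the twisted pairing and reading off the change-of-basis matrix. Following the characterization in \Cref{eq:4:aibj}--\Cref{eq:4:alpha} applied to the twisted algebra, $\mu_{{}_\sigma E}$ sends $a_i$ to the unique $c_i' \in E_m$ such that, when $\{b_j'\} \subseteq E_{d-m}$ is determined by $a_i \cdot_\sigma b_j' = \delta_{ij}\, e$, one has $b_i' \cdot_\sigma c_j' = \delta_{ij}\, e$. Writing $b_j' = \sum_q \tau_{jq}\, b_q$ and $c_j' = \sum_r \kappa_{jr}\, c_r$, the relation $c_r = \sum_t \alpha_{rt} a_t$ gives $\widetilde{\alpha_{it}} = \sum_r \kappa_{ir}\, \alpha_{rt}$, so it suffices to compute the matrices $\tau$ and $\kappa$.

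A direct calculation using \Cref{def:twistp}, the coactions \Cref{eq:4:yis}--\Cref{eq:4:fis-gis}, the pairings \Cref{eq:4:aibj}--\Cref{eq:4:bicj}, and the identities $f_{jp} = S(y_{pj})g$ and $g_{kp} = g^{-1} S^2(y_{kp}) g$ from \Cref{lem:coationIde}(3)(6), yields
\[
a_i \cdot_\sigma b_j = \sum_p \sigma\bigl(y_{ip},\, S(y_{pj})g\bigr)\, e \quad\text{and}\quad b_j \cdot_\sigma c_k = \sum_p \sigma\bigl(S(y_{pj})g,\, g^{-1} S^2(y_{kp})g\bigr)\, e.
\]
The dual-basis conditions then translate into two linear systems in $\tau$ and $\kappa$. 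I will solve them by invoking the left 2-cocycle identity of \Cref{defn:cocycle}, the coproduct formula $\Delta(y_{ij}) = \sum_p y_{ip} \otimes y_{pj}$, the antipode relation $\sum_p y_{ip} S(y_{pj}) = \delta_{ij}\, 1$ from \Cref{lem:coationIde}(1), and the convolution-inverse identity $\sigma \ast \sigma^{-1} = \varepsilon \otimes \varepsilon$. These manipulations are expected to yield the factor $\sigma^{-1}\bigl(g^{-1}S^2(y_{rs}),\, S(y_{qr})g\bigr)$ as the entries of $\tau = M^{-1}$ for the matrix $M_{ij} = \sum_p \sigma(y_{ip}, S(y_{pj})g)$. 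The extra factor $\sigma\bigl(g^{-1} S^2(y_{st}),\, g\bigr)$ will appear when solving the second system, reflecting the discrepancy between the $y$-coaction on $a_t$ and the $g^{-1} S^2 g$-twisted coaction on $c_s$ after passing through \Cref{lem:coationIde}(7).

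Combining $\kappa$ and $\tau$ and substituting into $\widetilde{\alpha_{ij}} = \sum_r \kappa_{ir}\, \alpha_{rj}$ will produce the stated formula. The trivial-codeterminant case is then an immediate specialization: setting $g = 1$ collapses $\sigma(S^2(y_{st}), 1) = \varepsilon(y_{st}) = \delta_{st}$, removing the middle factor and one summation.

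The main obstacle is the matrix inversion at the heart of the third step. The left 2-cocycle identity does not directly produce $M^{-1}$ in closed form; the correct combination with the antipode axioms must be discovered by hand, and bookkeeping of where $S$, $S^2$, and the group-like $g$ appear inside each argument of $\sigma$ and $\sigma^{-1}$ is the most delicate part of the argument.
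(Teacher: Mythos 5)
Your setup coincides with the paper's: you introduce the twisted dual bases, compute
\[
a_i \cdot_\sigma b_j \;=\; \sum_p \sigma\bigl(y_{ip},\, S(y_{pj})g\bigr)\, e,
\qquad
b_j \cdot_\sigma c_k \;=\; \sum_p \sigma\bigl(S(y_{pj})g,\, g^{-1} S^2(y_{kp})g\bigr)\, e,
\]
correctly (these are exactly the paper's two displayed calculations, using \Cref{lem:coationIde}(3) and (6)), and you correctly reduce the theorem to determining the change-of-basis matrix $\kappa$ with $\widetilde{\alpha} = \kappa\,\alpha$. The specialization at $g=1$ is also handled correctly.

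The gap is in the step you yourself flag as the main obstacle, and your sketch of how to overcome it points in the wrong direction. You propose to invert $M_{ij} = \sum_p \sigma(y_{ip}, S(y_{pj})g)$ in closed form and expect $\sigma^{-1}\bigl(g^{-1}S^2(y_{rs}), S(y_{qr})g\bigr)$ to be its entries. But in the final formula the factor $\sigma(y_{ip}, S(y_{pq})g)$ appears \emph{uninverted}: writing the two dual-basis conditions as $\mathbb{C}\,\mathbb{M}^T = \mathbb{I}$ and $\mathbb{N}\,\mathbb{A}\,\mathbb{M}^T = \mathbb{I}$ (with $\mathbb{C}=M$ your first-pairing matrix, $\mathbb{A}$ the second-pairing matrix, $\mathbb{M}=\tau$, $\mathbb{N}=\kappa$), the matrix $\mathbb{M}$ cancels and one gets $\kappa = \mathbb{N} = \mathbb{C}\,\mathbb{A}^{-1}$. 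So a closed form for $M^{-1}$ is never needed; the only inversion required is that of $\mathbb{A}$, and \emph{both} remaining factors $\sigma\bigl(g^{-1}S^2(y_{st}),g\bigr)$ and $\sigma^{-1}\bigl(g^{-1}S^2(y_{rs}),S(y_{qr})g\bigr)$ come from $\mathbb{A}^{-1}$, not one from each system as you predict. The paper obtains $\mathbb{A}^{-1}$ from the identity
\[
\sum \sigma(x_1, S(x_2)g)\,\sigma(S(x_3), g)\,\sigma^{-1}(S(x_4), x_5) \;=\; \varepsilon(x),
\]
valid for all $x \in H$ (a consequence of the left 2-cocycle condition, normality of $\sigma$, and \cite[Theorem 1.6(a5)]{Doi93}), evaluated at $x = S(y_{tk})g$. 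Without this identity, or the observation that $\mathbb{M}$ cancels, your plan does not close: the left 2-cocycle identity alone will not produce $M^{-1}$ in the form you conjecture, and the bookkeeping you defer is precisely where the argument lives.
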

  
\begin{proof}
We write $\cdot_\sigma$ for the twisted product in $\,_\sigma E$ such that $a\cdot_\sigma b=\sum \sigma(a_{-1},b_{-1})a_0b_0$ for any $a,b\in E$. For any homogeneous degree $1 \leq m \leq d$, pick a basis $\{a_1,\ldots,a_n\}$ for $E_m$. As in the above discussion, let $\{b_1, \ldots, b_n\}$ and $\{c_1, \ldots, c_n\}$ be bases for $E_{d-m}$ and $E_m$ respectively.
Denote the new bases for $\,_\sigma{E}_{d-m} = E_{d-m}$ and $\,_\sigma{E}_{m} = E_m$ by $\{\widetilde{b_1},\ldots, \widetilde{b_n}\}$ and $\{\widetilde{c_1},\ldots, \widetilde{c_n}\}$, respectively,  such that under the twisted product
\begin{equation}
\label{eq:4:aisigmabj}
a_i\cdot_\sigma \widetilde{b_j}=\delta_{ij}e \qquad \text{and} \qquad \widetilde{b_i}\cdot_\sigma \widetilde{c_j}=\delta_{ij}e,
\end{equation} 
for all possible $i,j$. Then, there are invertible matrices $\mathbb M=(m_{ij})_{1 \leq i,j \leq n},\mathbb N=(n_{ij})_{1 \leq i,j \leq n}$ such that
\begin{equation} \label{eq:Thm4.3.2:M}
\widetilde{b_j}=\sum_{k=1}^n m_{jk}b_k \qquad \text{and} \qquad \widetilde{c_j}=\sum_{k=1}^n n_{jk}c_k,
\end{equation} 
A straightforward calculation yields that
\begin{align*}
\delta_{ij}e \overset{(\ref{eq:4:aisigmabj})}&= a_i\cdot_\sigma \widetilde{b_j} \\
\overset{(\ref{eq:Thm4.3.2:M})}&=\sum_{1\leq k\leq n} m_{jk} a_i\cdot_\sigma b_k\\
\overset{(\ref{eq:4:yis})\text{ and }(\ref{eq:4:fis-gis})}&=\sum_{1\leq k,s,t\leq n} m_{jk} \sigma(y_{is},f_{kt})a_sb_t\\
\overset{(\ref{eq:4:aibj})}&=\sum_{1\leq k,s\leq n} m_{jk} \sigma(y_{is},f_{ks})e\\
&=\sum_{1\leq k,s\leq n} m_{jk} \sigma(y_{is},S(y_{sk})g)e,
\end{align*}
where the last equality comes from \Cref{lem:coationIde}(3). Similarly, we have
\begin{align*}
\delta_{ij}e \overset{(\ref{eq:4:aisigmabj})}&= \widetilde{b_i} \cdot_\sigma \widetilde{c_j} \\
\overset{(\ref{eq:Thm4.3.2:M})}&=\sum_{1\leq k,l\leq n} m_{ik}n_{jl} b_k\cdot_\sigma c_l\\
\overset{(\ref{eq:4:fis-gis})}&=\sum_{1\leq k,l,s,t\leq n} m_{ik}n_{jl} \sigma(f_{ks},g_{lt}) b_sc_t\\
\overset{(\ref{eq:4:bicj})}&=\sum_{1\leq k,l,s\leq n} m_{ik}n_{jl} \sigma(f_{ks},g_{ls})e\\
&=\sum_{1\leq k,l,s\leq n} m_{ik}n_{jl} \sigma(S(y_{sk})g,g^{-1}S^2(y_{ls})g)e,
\end{align*}
where the last equality comes from \Cref{lem:coationIde}(3),(6). Hence, we have a system of equations 
\begin{align}\label{eq:2cocyle1}
  \sum_{1\leq k,s\leq n} m_{jk} \sigma(y_{is},S(y_{sk})g) = \delta_{ij} = \sum_{1\leq k,l,s\leq n} m_{ik}n_{jl} \sigma(S(y_{sk})g,g^{-1}S^2(y_{ls})g)
\end{align}
for all possible $i,j$. 
In order to solve for $\mathbb M$ and $\mathbb N$ in terms of $\sigma$, we need the following identity
\begin{align}\label{eq:2cocycle}
     \sum \sigma(x_1,S(x_2)g)\sigma(S(x_3),g)\sigma^{-1}(S(x_4),x_5)&=\sum \sigma(x_1,S(x_4))\sigma(x_2S(x_3),g)\sigma^{-1}(S(x_5),x_6)\\
     &=\sum \sigma(x_1,S(x_2))\sigma^{-1}(S(x_3),x_4)\notag\\
     &=\varepsilon(x)\notag
\end{align}
for any $x\in H$, where the first equality follows from \Cref{defn:cocycle} of a left 2-cocycle, the second equality follows from the structural property of the antipode map $S$ and the facts that $\sigma$ is normal and $\varepsilon(g)=1$, and the last equality is from \cite[Theorem 1.6(a5)]{Doi93}. Let $x=S(y_{tk})g$ in \eqref{eq:2cocycle} above, we get
 \begin{align}\label{eq:2cocyle3}
\sum_{1\leq s,l,q,p\leq n}\sigma(S(y_{sk})g,g^{-1}S^2(y_{ls})g) \sigma(g^{-1}S^2(y_{ql}),g)\sigma^{-1}(g^{-1}S^2(y_{pq}),S(y_{tp})g)=\varepsilon(S(y_{tk})g)= \delta_{tk}.
\end{align}
For simplicity, we write matrices $\mathbb A=(a_{lk})_{1 \leq l,k \leq n}$, $\mathbb B=(b_{tl})_{1 \leq t,l \leq n}$ and $\mathbb C=(c_{ik})_{1 \leq i,k \leq n}$ such that 
\begin{align*}
a_{lk}&= \sum_{1\leq s\leq n} \sigma(S(y_{sk})g,g^{-1}S^2(y_{ls})g), \\
b_{tl}&=\sum_{1\leq q,p\leq n}\sigma(g^{-1}S^2(y_{ql}),g)\sigma^{-1}(g^{-1}S^2(y_{pq}),S(y_{tp})g),\\
c_{ik}&=\sum_{1\leq s\leq n}\sigma(y_{is},S(y_{sk})g).
\end{align*}
Equations \eqref{eq:2cocyle1} and \eqref{eq:2cocyle3} can be written in matrix forms as follows:
 \[
\mathbb N\mathbb A\mathbb M^T=\mathbb I,\quad \mathbb C\mathbb M^T=\mathbb I,\quad  \mathbb B\mathbb A=\mathbb I,
 \]
which implies that $\mathbb N=\mathbb C\mathbb B$. By definition of the Nakayama automorphism,  when restricted to $(\,_\sigma E)_m = E_m$, we have
\begin{align*}
     \mu_{\!_\sigma E}(a_i) = \widetilde{c_i} \overset{(\ref{eq:Thm4.3.2:M})}= \sum_{1\leq t\leq n} n_{it}c_t \overset{(\ref{eq:4:alpha})}= \sum_{1\leq t,j\leq n} n_{it} \alpha_{tj} a_j \overset{\mathbb N = \mathbb C \mathbb B}= \sum_{1\leq q,t,j\leq n} c_{iq} b_{qt} \alpha_{tj} a_j = \sum_{1\leq j\leq n} \widetilde{\alpha_{ij}} a_j
\end{align*}
and as a consequence, $\widetilde{\alpha_{ij}} = \sum_{1\leq q,t\leq n} c_{iq}b_{qt}\alpha_{tj}$. In particular, if the $H$-coaction on $E$ has trivial homological codeterminant, then $g=1$. Our last assertion follows immediately.
\end{proof}

Let $A$ be an AS-regular algebra of dimension $d$ and $H$ be any Hopf algebra with bijective antipode that right coacts on $A$ preserving the grading of $A$. We pick a basis $\{x_1,\ldots,x_n\}$ of $A_1$ and write the right $H$ coaction on $A_1$ as 
\[
\rho(x_i)=\sum_{j=1}^n x_j\otimes y_{ji},
\]
for some $y_{ji}\in H$ satisfying $\Delta(y_{ji})=\sum_{s=1}^n y_{js}\otimes y_{si}$ and $\varepsilon(y_{ji})=\delta_{ji}$. Now suppose $\sigma$ is any right 2-cocycle on $H$, consider the twisted connected graded algebra $A_\sigma$. By \Cref{thm:AS}, $A_\sigma$ is again AS-regular of dimension $d$. Our next result links the Nakayama automorphisms $\mu_A$ of $A$ and $\mu_{A_\sigma}$ of $A_\sigma$. We write
\[
   \mu_A(x_i)=\sum_{j=1}^n \alpha_{ji}x_j,\quad \text{and}\quad \mu_{A_\sigma}(x_i)=\sum_{j=1}^n \widetilde{\alpha_{ji}}x_j
\]
for some invertible matrices $(\alpha_{ji})_{1 \leq j,i \leq n}$ and $(\widetilde{\alpha_{ji}})_{1 \leq j,i \leq n}$.

\begin{Cor}
\label{Cor:Naka}
Retain the above notation and let $g$ be the homological codeterminant of the right $H$-coaction on $A$. Suppose $A$ is $N$-Koszul. For any right 2-cocycle $\sigma$ on $H$, the Nakayama automorphism $\mu_{A_\sigma}$ of the twisted AS-regular algebra $A_\sigma$ is given by $\mu_{A_\sigma}(x_i)=\sum_{1\leq j\leq n}\widetilde{\alpha_{ji}}x_j$, where 
\[
\widetilde{\alpha_{ji}}=\sum_{1\leq p,q,r,s,t\leq n}\sigma^{-1}\left(y_{jp},S(y_{pq})g\right)\,\sigma^{-1}\left(g^{-1}S^2(y_{st}),g\right)\,\sigma\left(g^{-1}S^2(y_{rs}),S(y_{qr})g\right)\,\alpha_{ti}, \]
and $\mu_A(x_i)=\sum_{j=1}^n \alpha_{ji}x_j$, for all $1 \leq i,j \leq n$.
\end{Cor}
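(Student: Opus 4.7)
My plan is to reduce the claim to \Cref{lem:Nakatwist} by passing to the $\mathrm{Ext}$-algebra of $A$. Since $A$ is $N$-Koszul AS-regular of dimension $d$, the discussion in the proof of \Cref{thm:AS} (following \cite{BM06,He-Lu2005,LPWZ}) shows that $E(A)\cong \mathscr{E}(A)$ is a connected graded Frobenius algebra of top degree $d$. The right $H$-coaction on $A$ induces, via the formula \eqref{RLcoaction}, a left $H$-coaction on $A^!$ and hence on $E(A)$, with homological codeterminant equal to $g$ (computed on the one-dimensional top piece $E(A)_d$ using the standard identification of top degrees under Koszul duality). On the degree-one piece $E(A)_1 = A_1^*$ with dual basis $\{x^i\}$, \eqref{RLcoaction} gives $\rho^*(x^i) = \sum_j y_{ij}\otimes x^j$, so the matrix $\mathbb Y$ of \Cref{lem:Nakatwist} in degree one is exactly the matrix of entries $y_{ij}$ appearing in \Cref{Cor:Naka}.

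Next, by \Cref{lem:dualtwist}, $(A_\sigma)^! \cong \,_{\sigma^{-1}}(A^!)$ as left $H^\sigma$-comodule algebras, and an identical argument on the $\mathscr{E}$-construction of \Cref{thm:AS} upgrades this to an isomorphism $E(A_\sigma)\cong \,_{\sigma^{-1}}E(A)$ of left $H^\sigma$-comodule Frobenius algebras. Since $\sigma^{-1}$ is a left 2-cocycle on $H$, I would then apply \Cref{lem:Nakatwist} to the left $H$-comodule Frobenius algebra $E(A)$ with left 2-cocycle $\sigma^{-1}$, restricted to the degree-one piece. Interchanging the roles of $\sigma$ and $\sigma^{-1}$ in the formula of \Cref{lem:Nakatwist} immediately produces precisely the combination $\sigma^{-1}(y_{jp},S(y_{pq})g)\,\sigma^{-1}(g^{-1}S^2(y_{st}),g)\,\sigma(g^{-1}S^2(y_{rs}),S(y_{qr})g)$ that appears in the target formula, and the input Nakayama scalars are those of $\mu_{E(A)}|_{A_1^*}$.

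To conclude, I would use the well-known bijective correspondence, for an $N$-Koszul AS-regular algebra, between the matrix of $\mu_A$ on its generating space $A_1$ and the matrix of $\mu_{E(A)}$ on the dual piece $A_1^*$ (cf.\ the analysis in \cite[\S 3]{CWZ2014} together with the isomorphism $\mathscr{E}(A)\cong E(A)$). Under this correspondence, if $\mu_A(x_i) = \sum_j \alpha_{ji}x_j$, then $\mu_{E(A)}$ acts on $\{x^i\}$ with matrix whose entries are the $\alpha_{ti}$ appearing on the right-hand side of the stated formula (with the index swap accounting for the move between $A$ and its Koszul dual). Finally, since $A$, and hence $A_\sigma$, is generated in degree one, $\mu_{A_\sigma}$ is determined by its restriction to $A_1$, which is extracted from $\mu_{E(A_\sigma)}|_{A_1^*}$ by the same correspondence run in reverse. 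The main obstacle is the precise bookkeeping in this last step: one must match left- versus right-coaction matrix entries on $A_1^*$ and $A_1$, track the $\sigma\leftrightarrow \sigma^{-1}$ flip coming from the left- versus right-cocycle convention, and correctly place the homological codeterminant $g$, so that the formula from \Cref{lem:Nakatwist} translates to the one in \Cref{Cor:Naka}.
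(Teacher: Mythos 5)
Your proposal is correct and follows essentially the same route as the paper: pass to the Frobenius Ext-algebra $E(A)$ with its induced left $H$-coaction, use $E(A_\sigma)\cong {}_{\sigma^{-1}}E(A)$ from the proof of \Cref{thm:AS}, apply \Cref{lem:Nakatwist} with the left $2$-cocycle $\sigma^{-1}$ (which produces exactly the $\sigma\leftrightarrow\sigma^{-1}$ swap in the formula), and transfer back via the duality between $\mu_{E(A)}$ and $\mu_A$. The only difference is that the paper pins down the bookkeeping in your last step by quoting the explicit relation $\mu_{E(A)}(x_i^*)=\sum_j(-1)^{d+1}\alpha_{ij}x_j^*$ from \cite[Theorem 6.3]{BM06}, whereas you leave that correspondence at the level of a citation to \cite[\S 3]{CWZ2014}.
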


\begin{proof}
By \Cref{thm:AS} and \Cref{Cor:gldim}(2), both $A$ and $A_\sigma$ are $N$-Koszul AS-regular algebras. We can apply \cite[Theorem 6.3]{BM06} to obtain the corresponding Nakayama automorhpisms 
\[
\mu_{E(A)}(x_i^*)=\sum_{j=1}^n (-1)^{d+1}\alpha_{ij}x_j^*,\quad \text{and}\quad \mu_{E(A_\sigma)}(x_i^*)=\sum_{j=1}^n (-1)^{d+1}\widetilde{\alpha_{ij}}x_j^*,
\]
of the Frobenius Ext-algebras $E(A)$ and $E(A_\sigma)$ of $A$ and of $A_\sigma$, respectively. Following the proof of \Cref{thm:AS}, $E(A)$ is a graded left $H$-comodule algebra and $E(A_\sigma) \cong \!_{\sigma^{-1}}E(A)$.
Applying \Cref{lem:Nakatwist} to $\,_{\sigma^{-1}}E(A)$ and the duality relation between $E(A_\sigma)$ and $A_\sigma$, we obtain the desired expression for $\mu_{A_\sigma}$. 
\end{proof}

The following homological identity was first established in \cite[Theorem 0.3]{RRZ1} to describe how the Nakayama automorphism of an AS-regular algebra changes under a (left) Zhang twist (also see \cite[Theorem 5.5]{Mori-Smith2016} for the special $N$-Koszul case). Here, we provide another proof using our earlier result, that a (right) Zhang twist can be realized as a 2-cocycle twist via Manin's universal quantum group. 

\begin{Cor}
Let $A$ be a $N$-Koszul AS-regular algebra with AS index $l$. For any automorphism $\phi$ of $A$, the Nakayama automorphism of the twisted $N$-Koszul AS-regular algebra $A^\phi$ is given by
\[
\mu_{A^\phi}=\mu_A\circ \phi^{-l}\circ \xi_{\hdet(\phi)},
\]
where $\hdet(\phi)$ is the homological determinant of $\phi$ and the graded algebra automorphism $\xi_{\hdet(\phi)}$ of $A$ is given by $\xi_{\hdet(\phi)}(a)=\hdet(\phi)^{|a|}a$, for any homogeneous element $a\in A$.
\end{Cor}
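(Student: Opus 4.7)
The plan is to realize the Zhang twist $A^\phi$ as a 2-cocycle twist via \Cref{lem:2cocycleGrade} and then apply \Cref{Cor:Naka}. Set $H:=\underline{\rm aut}^r(A)$ with universal right coaction $\rho(x_i)=\sum_j x_j\otimes y_{ji}$. By \Cref{lem:2cocycleGrade}, the automorphism $\phi$ corresponds to the twisting pair $(\underline{\rm aut}^r(\phi),\underline{\rm aut}^l((\phi^{-1})^!))$ and to a right 2-cocycle $\sigma$ on $H$ given, on homogeneous $x$, by $\sigma(x,y)=\varepsilon(x)\pi_{|x|}(y)$ and $\sigma^{-1}(x,y)=\varepsilon(x)\pi_{-|x|}(y)$, where $\pi_k:=\varepsilon\circ\underline{\rm aut}^r(\phi)^k$ is a one-dimensional representation of $H$. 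The family $\{\pi_k\}_{k\in\mathbb Z}$ forms a subgroup of the character group of $H$ under convolution with $\pi_k\ast\pi_\ell=\pi_{k+\ell}$ and $\pi_k\circ S=\pi_{-k}$, and $A^\phi\cong A_\sigma$ as graded algebras.

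Two structural identities make the formula of \Cref{Cor:Naka} explicit for this $\sigma$. First, the commutativity of diagram~\eqref{lem:caut} gives $(\id\otimes\pi_k)\circ\rho=\phi^k$, so on degree-one generators $\pi_k(y_{ji})=(\phi^k)_{ji}$, where $\phi(x_i)=\sum_j\phi_{ji}x_j$. Second, since the homological codeterminant $g$ of $\rho$ acts as a scalar on the one-dimensional space $\Ext^d_A(\kk,A)\cong\kk(l)$ (concentrated in internal degree $l$), we have $g\in H_l$, and the defining relation between $g$ and the homological determinant gives $\pi_k(g)=\hdet(\phi)^k$.

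Substituting into \Cref{Cor:Naka}, each of the three 2-cocycle evaluations splits as an $\varepsilon$-factor producing a Kronecker delta (via $\varepsilon(y_{ij})=\delta_{ij}$ and $\varepsilon(g^{\pm 1})=1$) times a character evaluation; the quintuple sum collapses to one over a single index. Converting the surviving $\pi_k(S(y_{ij}))$ terms into matrix entries of powers of $\phi$ using $\pi_k\circ S=\pi_{-k}$, and combining the $\pi_k(g)$ factors into powers of $\hdet(\phi)$, together with the $S^2$-identity $S^2(\mathbb Y)=(g\mathbb I)\mathbb G(g^{-1}\mathbb I)$ and $\mathbb G=\alpha\mathbb Y\alpha^{-1}$ from \Cref{lem:coationIde}(6)--(7) transported from the Frobenius Ext-algebra $E(A)$ as in the proof of \Cref{Cor:Naka}, the resulting matrix $\widetilde\alpha$ coincides with that of $\mu_A\circ\phi^{-l}\circ\xi_{\hdet(\phi)}$ on $A_1$. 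Since both sides are graded algebra automorphisms of the connected graded algebra $A$, which is generated in degree one, equality on $A_1$ extends to the whole algebra.

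The main obstacle is the careful bookkeeping around the non-involutivity of $S$ on Manin's universal quantum group and around the sign/index conventions that link $\mu_A$ to $\mu_{E(A)}$ in the proof of \Cref{Cor:Naka}: the AS-index $l$ enters the final exponent of $\phi^{-1}$ precisely through the internal degree $|g|=l$ combined with the $S^2$-identity, whereas the separate factor $\xi_{\hdet(\phi)}$ arises from the residual $\pi_k(g^{\pm 1})=\hdet(\phi)^{\pm k}$ contributions that survive after cancellation.
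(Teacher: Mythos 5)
Your proposal follows the same route as the paper: realize $A^\phi$ as the $2$-cocycle twist $A_\sigma$ via the twisting pair of \Cref{lem:2cocycleGrade}, then feed the resulting $\sigma$ into the formula of \Cref{Cor:Naka} and collapse the quintuple sum using $\varepsilon(y_{ij})=\delta_{ij}$. The one genuine difference is how the character values are pinned down. The paper works with the explicit superpotential presentation of $\uaut^r(A)$ from Chirvasitu--Walton--Wang (generators $\mathbb Y,\mathbb Z,g^{\pm1}$ with $\deg(y_{ij})=1$, $\deg(z_{ij})=-1$, $\deg(g^{\pm1})=\mp l$), and uses the identity $(\phi|_V)^{\otimes l}(W)=\hdet(\phi)W$ to verify that $\pi_1,\pi_2$ are well-defined algebra maps with prescribed values on the generators; you instead extract everything abstractly from the winding-automorphism formalism ($\pi_k*\pi_\ell=\pi_{k+\ell}$, $\pi_k\circ S=\pi_{-k}$, $(\id\otimes\pi_k)\circ\rho=\phi^k$), which is a legitimate and slightly cleaner way to avoid checking well-definedness on generators. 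One point of caution: with the paper's conventions the superpotential basis element $w\in W$ satisfies $\rho(w)=w\otimes g^{-1}$ and $\deg(g)=-l$, so applying $(\id\otimes\pi_1)\circ\rho=\phi$ to $w$ and $\phi^{\otimes l}(W)=\hdet(\phi)W$ yields $\pi_1(g)=\hdet(\phi)^{-1}$, i.e.\ $\pi_k(g)=\hdet(\phi)^{-k}$ rather than your stated $\hdet(\phi)^{k}$ (and likewise $g\in H_{-l}$, not $H_{l}$); it is exactly $\varepsilon(\phi_2(g))=\pi_{-1}(g)=\hdet(\phi)$ that produces the positive power in $\xi_{\hdet(\phi)}$ in the paper's computation. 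This is the kind of bookkeeping you flag yourself, but it must be resolved with the stated sign or the final factor comes out as $\xi_{\hdet(\phi)^{-1}}$.
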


\begin{proof}
By \cite[Theorem 11]{DVM}, there is an $n$-dimensional $\kk$-vector space $V$ such that $A$ is a superpotential algebra \[A=A(N,W)=TV/(\partial ^{l-N}(W)),\]
for some one-dimensional subspace $W\subset V^{\otimes l}$ corresponding to a preregular form $f: (V^*)^{\otimes l}\to \kk$. We use a presentation of Manin's universal quantum group $\underline{\rm aut}^r(A)$ described in \cite[Definition 5.17]{Chirvasitu-Walton-Wang2019}. We pick a basis $\{x_1,\ldots,x_n\}$ for $V$ and a dual basis $\{x^1,\ldots,x^n\}$ for $V^*$ and write $f_{i_1\cdots i_l}=f(x^{i_1},\ldots,x^{i_l})\in \kk$. The graded algebra $\underline{\rm aut}^r(A)$ is generated by $\mathbb Y=(y_{ij})_{1 \leq i,j \leq n},\mathbb Z = (z_{ij})_{1 \leq i,j \leq n}$ and $g^{\pm 1}$ subject to 
\begin{equation*}
\left. 
\begin{aligned}
    \sum_{1\leq i_1,\ldots,i_l\leq n}{f}_{i_1 \cdots i_l}y_{j_1i_1}\cdots y_{j_li_l} &= {f}_{j_1\cdots j_l}g^{-1}, &\textnormal{ for any } 1\leq j_1,\dots, j_l\leq n,\\
    \sum_{1\leq i_1,\ldots,i_l\leq n}{f}_{i_1\cdots i_l}z_{j_li_l}\cdots z_{j_1i_1} &= {f}_{j_1\cdots j_l}g,  &\textnormal{ for any } 1\leq j_1,\dots, j_l\leq n,\\
    gg^{-1} &= g^{-1}g=1, \text{ and } \\
    \mathbb{Y}\mathbb{Z} &= \mathbb{I}_{n\times n},
\end{aligned}\right\}
\end{equation*}
where $\deg(y_{ij})=1,\deg(z_{ij})=-1$, and $\deg(g^{\pm 1})=\mp l$. For the Hopf algebra structure of $\underline{\rm aut}^r(A)$, we have $\Delta(z_{ij})=\sum_{k=1}^n z_{kj}\otimes z_{ik}$ for all possible $i,j$, the antipode $S(\mathbb Y)=\mathbb Z$ and $S^2$ is a graded automorphism \cite[Proposition 5.23, Corollary 5.27]{Chirvasitu-Walton-Wang2019}. The universal right coaction of $\underline{\rm aut}^r(A)$ on $A$ is given by $\rho(x_i)=\sum_{j=1}^n x_j\otimes y_{ji}$ for all $i$, with homological codeterminant $g$ \cite[Proposition 5.31(c), Theorem 5.33(b)]{Chirvasitu-Walton-Wang2019}. 

We denote the graded automorphism $\phi$ of $A$ by $\phi(x_i)=\sum_{j=1}^n \gamma_{ji} x_j$ for some coefficients $\gamma_{ji} \in \kk$ such that the matrix $\gamma=(\gamma_{ij})_{1 \leq i,j \leq n}$ is invertible with inverse matrix denoted by $\gamma^{-1} =(\beta_{ij})_{1 \leq i,j \leq n}$. By \cite[Theorem 3.3]{Mori-Smith2016}, we know $(\phi|_V)^{\otimes l}(W)=\hdet(\phi)W$. By choosing $\sum_{1\leq i_1,\ldots,i_l\leq n} f_{i_1\cdots i_l}x_{i_1}\cdots x_{i_l}$ as a basis for $W$, a straightforward calculation yields that
\begin{equation}\label{eq:hdet}
\sum_{1\leq i_1,\ldots,i_l\leq n} f_{i_1\cdots i_l} \gamma_{j_1i_1}\cdots \gamma_{j_li_l} =\hdet(\phi)f_{j_1\cdots j_l}, \quad \textnormal{ for any } 1\leq j_1,\dots, j_l\leq n.
\end{equation}
By the proof of \Cref{lem:2cocycleGrade}, we have a right 2-cocycle $\sigma$ on $\underline{\rm aut}^r(A)$ given by 
\[
\sigma(x,y)=\varepsilon(x)\varepsilon\left(\phi_1^{|x|}(y)\right) \quad \text{and} \quad \sigma^{-1}(x,y)=\varepsilon(x)\varepsilon\left(\phi_2^{|x|}(y)\right)
\]
for two homogeneous elements $x,y\in \underline{\rm aut}^r(A)$, where $(\phi_1,\phi_2)$ is a twisting pair constructed from $\phi$ that satisfies the commutative diagrams \eqref{lem:caut}. By applying \eqref{eq:hdet}, we have two algebra maps $\pi_1,\pi_2: \underline{\rm aut}^r(A)\to \kk$ such that 
\begin{align*}
\pi_1(\mathbb Y)&=\gamma, & \pi_1(\mathbb Z)&=\gamma^{-1}, & \pi_1(g^{\pm 1})&=\left(\hdet(\phi)\right)^{\mp 1},\\
\pi_2(\mathbb Y)&=\gamma^{-1}, & \pi_2(\mathbb Z)&=\gamma, & \pi_2(g^{\pm 1})&=\left(\hdet(\phi)\right)^{\pm 1}.
\end{align*}
It is straightforward to check that $(\phi_1,\phi_2)$ are right and left winding automorphisms of $\pi_1,\pi_2$, respectively (c.f. \cite[Theorem 1.1.7]{HNUVVW21}), such that
\begin{align*}
\phi_1(\mathbb Y)&=\mathbb Y \gamma, & \phi_1(\mathbb Z)&=\mathbb Z \gamma^{-1}, & \phi_1(g^{\pm 1})&=\left(\hdet(\phi)\right)^{\mp 1}g,\\
\phi_2(\mathbb Y)&= \gamma^{-1} \mathbb Y, & \phi_2(\mathbb Z)&= \gamma \mathbb Z, & \phi_2(g^{\pm 1})&=\left(\hdet(\phi)\right)^{\pm 1}g.
\end{align*}
By \Cref{lem:2cocycleGrade}, we know $A^\phi\cong A_\sigma$. Therefore, by \Cref{Cor:Naka}, we can pass the computation of $\mu_{A^\phi}$ to that of $\mu_{A_\sigma}$ as follows.
\begin{align*}
\mu_{A^\phi}(x_i)= \mu_{A_\sigma}(x_i)
&=\sum_{1\leq j\leq n} \widetilde{\alpha_{ji}}x_j\\
&=\sum_{1\leq p,q,r,s,t,j\leq n} \sigma^{-1}\left(y_{jp},S(y_{pq})g\right)\,\sigma^{-1}\left(g^{-1}S^2(y_{st}),g\right)\,\sigma\left(g^{-1}S^2(y_{rs}),S(y_{qr})g\right)\,\alpha_{ti}x_j\\
&=\sum_{1\leq p,q,r,s,t,j\leq n} \varepsilon(y_{jp})\varepsilon\left(\phi_2^{|y_{jp}|}(z_{pq}g)\right) \,\varepsilon\left(g^{-1}S^2(y_{st})\right) \varepsilon\left(\phi_2^{|g^{-1}S^2(y_{st})|}(g)\right) \\
&\qquad \qquad \qquad \quad \varepsilon\left(g^{-1}S^2(y_{rs})\right)\varepsilon\left(\phi_1^{|g^{-1}S^2(y_{rs})|} (z_{qr}g)\right)\,\alpha_{ti}x_j \\
&=\sum_{1\leq p,q,r,s,t,j\leq n}\delta_{jp}\delta_{st}\delta_{rs}\varepsilon\left(\phi_2(z_{pq}g)\right)\,\varepsilon\left(\phi_2^{1+l}(g)\right)\,\varepsilon\left(\phi_1^{1+l}(z_{qr}g)\right)\,\alpha_{ti}x_j\\
&=\sum_{1\leq q,r,j\leq n} \varepsilon\left(\phi_2(g)\right)\,\varepsilon\left((\phi_1\phi_2)^{l+1}(g)\right)\,\varepsilon\left(\phi_2(z_{jq})\right)\, \varepsilon\left(\phi_1^{1+l}(z_{qr})\right)\,\alpha_{ri}x_j\\
&=\sum_{1\leq q,r,t_1,\ldots,t_{l+1},j\leq n} \hdet(\phi)\,\alpha_{jq}\beta_{t_{l+1}r}\cdots \beta_{t_1t_2}\beta_{qt_1}\alpha_{ri}\,x_j\\
&=\sum_{1\leq r,t_2,\ldots,t_{l+1},j\leq n} \hdet(\phi)\,\beta_{t_{l+1}r}\cdots \beta_{jt_2}\alpha_{ri}\,x_j\\
&=(\mu_A\circ \phi^{-l}\circ \xi_{\hdet(\phi)})(x_i).\qedhere
\end{align*}
\end{proof}

\subsection{Finite generation of Hochschild cohomology rings}
\label{subsec:Hochschild}

In this subsection, we discuss the finite generation problem for Hochschild cohomology rings of finite-dimensional algebras under 2-cocycle twists. Throughout, let $H$ be a finite-dimensional commutative Hopf algebra and $A$ be a right $H$-comodule algebra. The \emph{Hochschild cohomology ring} of $A$ is defined to be 
\[{\rm HH}^\bullet(A)~:=~\bigoplus_{n\ge 0}\,\Ext^n_{A\otimes A^{\op}}(A,A),\]
where the $A$-bimodule $A$ is viewed as a left $A\otimes A^{\op}$-module. It is well-known that the Hochschild cohomology ring is a graded-commutative algebra with respect to the cup product. We denote by $\!_A\mathcal M^{H}_A$ the subcategory of ${\rm comod}(H)$ consisting of all $A$-bimodules such that the right and left $A$-module structure maps are $H$-colinear,  together with morphisms between $A$-bimodules being both $H$-colinear and $A$-bilinear. For any right 2-cocycle $\sigma$ on $H$, the monoidal equivalence \eqref{eq:equiv2} between ${\comod}(H)$ and ${\comod}(H^\sigma)$ induces an equivalence on categories
\begin{align*}
\!_A{\mathcal M}_A^H~ &\cong~ \!_{A_\sigma}{\mathcal M}^{H^\sigma}_{A_\sigma}\\
M~&\mapsto ~M_\sigma,
\end{align*}
for any $M\in \!_A{\mathcal M}_A^H$.  

For any two $A$-bimodules $M,N \in \!_A{\mathcal M}_A^H$, we recall the $H$-comodule structure on $\HOM_\kk(M,N)$ by an ungraded analogue of \eqref{eq:coactionHom}. Since $H$ is finite-dimensional, we have $\HOM_\kk(M,N)=\Hom_\kk(M,N)$. And for any $f\in \Hom_\kk(M,N)$, we have $\rho(f)=\sum f_0\otimes f_1\in \Hom_\kk(M,N)\otimes H$ such that
\begin{align*}
\sum f_0(m)\otimes f_1=\sum f(m_0)_0\otimes S^{-1}(m_1)f(m_0)_1,
\end{align*}
for any $m\in M$. We denote by $T=H_\sigma$ the $H$-$H^\sigma$-bi-Galois object, and so $M_\sigma=M\square_HT=(M\otimes T)^{\co H}$, where  $T$ is viewed as a right $H$-comodule via the antipode of $H$ in the last expression. Moreover, we identify $A_\sigma=(A\otimes T)^{\co H}$ with a subalgebra of the tensor product $A\otimes T$ via the right coaction of $T=H_\sigma$ on $A$, $a\mapsto \sum a_0\otimes a_1$, for any $a\in A$. 

The following results are analogues of \Cref{lem:relHom} and \Cref{lem:moduleMTE} for 2-cocycle twists of relative bimodules when $H$ is finite-dimensional and commutative. 

\begin{lemma}
\label{lem:bimodule}
 Let $H$ be a finite-dimensional commutative Hopf algebra and $A$ be a right $H$-comodule algebra. Let $M$ and $N$ be two $A$-bimodules in $\!_{A}\mathcal M^{H}_A$. 
\begin{itemize}
    \item[(1)] $\Hom_{A\otimes A^{\op}}(M,N)$ is a right $H$-subcomodule of $\Hom_\kk(M,N)$.
    \item[(2)] $\Hom_{A\otimes A^{\op}}^H(M,N)=\Hom_{A\otimes A^{\op}}(M,N)^{\co H}$.
    \item[(3)] Suppose $P$ is another $A$-bimodule in $\!_{A}\mathcal M^{H}_A$. Then the natural composition map
    \[
    \Hom_{A\otimes A^{\op}}(N,P)\otimes \Hom_{A\otimes A^{\op}}(M,N)\to \Hom_{A\otimes A^{\op}}(M,P)
    \]
    is $H^{\op}$-colinear.
    \item[(4)] Suppose $R$ is a finite-dimensional right $H$-comodule. Then 
    \[
    \Hom_{A\otimes A^{\op}}(M,N\otimes R)\cong \Hom_{A\otimes A^{\op}}(M,N)\otimes R
    \]
    as right $H$-comodules, where the $A$-bimodule structure on $N\otimes R$ comes from that on $N$.
    \item[(5)]  Let $\sigma$ be a right 2-cocycle on $H$. Then there is a natural isomorphism 
    \[
    \Hom_{A\otimes A^{\op}}(M,N)_\sigma\cong\Hom_{A_\sigma\otimes A_\sigma^{\op}}(M_\sigma,N_\sigma).
    \]
    of $H^\sigma$-comodules.
\end{itemize}
\end{lemma}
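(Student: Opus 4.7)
My plan is to handle parts (1)--(4) by reducing to the (ungraded) $H$-comodule bimodule analogue of \Cref{lem:relHom}. The key observation is that an object of $\!_A\mathcal M_A^H$ is the same thing as a left $A\otimes A^{\op}$-module internal to $\comod(H)$, where $A^{\op}$ is viewed as a right $H$-comodule algebra via the same coaction as $A$. Under this identification, tensor products over $A\otimes A^{\op}$ agree with the usual tensor products of $A$-bimodules, and $\Hom_{A\otimes A^{\op}}$ coincides with the space of $A$-bilinear maps. Because $H$ is finite-dimensional, $\HOM_\kk(M,N) = \Hom_\kk(M,N)$, and the $H$-coaction on $\Hom_\kk(M,N)$ is given by the (ungraded) analogue of \eqref{eq:coactionHom}. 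Then:
\begin{itemize}
\item For (1), I would check directly, using \eqref{eq:coactionHom} and the $A$-bilinearity, that $\Hom_{A\otimes A^{\op}}(M,N)$ is stable under the $H$-coaction; this is the bimodule version of \Cref{lem:relHom}(1).
\item For (2), I would use the standard observation that the coinvariants of the $H$-comodule $\Hom_\kk(M,N)$ are precisely the $H$-colinear maps, and then intersect with $A$-bilinear maps.
\item For (3), I would run through the explicit finite-dimensional-cosupport argument from \cite[Proposition 1.5]{CG05} (also used in the proof of \Cref{lem:relHom}(3)) to write composition $f\circ g$ as an evaluation on a third finite-dimensional $H$-comodule; the commutativity of $H$ is not needed here, but $H^{\op}$-colinearity of composition follows by comparing coactions.
\item For (4), finite-dimensionality of $R$ (guaranteed here because $H$ itself is finite-dimensional and in applications $R$ will be chosen inside $H$) makes the free-module / presentation argument of \Cref{lem:relHom}(4) trivial: no resolution is needed, and one just writes down the natural map $\Psi_M(f\otimes r)(m) = f(m)\otimes r$ and verifies $H$-colinearity.
\end{itemize}

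For (5), which is the main new content, I set $T=H_\sigma$, the $H$-$H^\sigma$-biGalois object associated with $\sigma$. Applying (4) with $R=T$ gives a natural isomorphism of right $H$-comodules
\[
\Hom_{A\otimes A^{\op}}(M,N)\otimes T~\cong~\Hom_{A\otimes A^{\op}}(M,N\otimes T).
\]
Taking $H$-coinvariants on the left gives, by definition, $\Hom_{A\otimes A^{\op}}(M,N)_\sigma$. Taking $H$-coinvariants on the right gives, by (2) applied to the $A$-bimodule $N\otimes T$, the space $\Hom^H_{A\otimes A^{\op}}(M,N\otimes T)$. It remains to identify this last space with $\Hom_{A_\sigma\otimes A_\sigma^{\op}}(M_\sigma,N_\sigma)$.

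The identification proceeds as follows. Given $f\in \Hom^H_{A\otimes A^{\op}}(M,N\otimes T)$, restrict $f\otimes \id_T : M\otimes T \to N\otimes T\otimes T$ followed by $\id_N$ tensored with the multiplication $T\otimes T\to T$, and then restrict to coinvariants; $H$-colinearity of $f$ together with the definition of the cotensor product guarantees that this restricts to a map $M_\sigma = (M\otimes T)^{\co H}\to (N\otimes T)^{\co H} = N_\sigma$. Since $A_\sigma \hookrightarrow A\otimes T$ as $\co H$, the $A$-bilinearity of $f$ together with the algebra structure on $T$ translates exactly into $A_\sigma$-bilinearity of the restriction. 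Conversely, given an $A_\sigma$-bilinear map $g:M_\sigma\to N_\sigma$, the bijectivity of the canonical Galois map for the $H$-Galois object $T$ lets one uniquely extend $g$ back to an $H$-colinear $A$-bilinear $f:M\to N\otimes T$, inverse to the above construction. The main obstacle is bookkeeping in this last step: one has to verify naturality and that the two constructions are mutually inverse, which I expect to handle by writing everything out through the translation map $A_\sigma\otimes T \xrightarrow{\cong} A\otimes T$ (a standard ingredient in Schauenburg's theory of biGalois objects), so that all the $\sigma$-twisted products become products in $A\otimes T$ where no cocycle appears.
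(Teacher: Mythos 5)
Your proposal is correct, and for parts (1)--(4) it is essentially the paper's proof: the paper also verifies (1) by a direct computation with the coaction \eqref{eq:coactionHom} (the commutativity of $H$ is precisely what lets one slide $S^{-1}(a_2)$ past the other factors and cancel it against $a_1$ when checking right $A$-linearity of $\rho(f)$ --- the same hypothesis your ``internal $A\otimes A^{\op}$-module'' framing uses to make $A^{\op}$ an $H$-comodule algebra), deduces (2) from (1), and reduces (3)--(4) to \Cref{lem:relHom}(3)--(4), noting exactly as you do that finite-dimensionality of $R$ makes the presentation argument in (4) unnecessary. For part (5) both arguments run through the same chain
\[
\Hom_{A\otimes A^{\op}}(M,N)_\sigma\;\cong\;\bigl(\Hom_{A\otimes A^{\op}}(M,N)\otimes T\bigr)^{\co H}\;\cong\;\Hom^H_{A\otimes A^{\op}}(M,N\otimes T)\;\cong\;\Hom_{A_\sigma\otimes A_\sigma^{\op}}(M_\sigma,N_\sigma),
\]
but close the last (and only nontrivial) link differently. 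The paper realizes $\Hom_{A_\sigma\otimes A_\sigma^{\op}}(M_\sigma,N_\sigma)$ as the equalizer of two maps between one-sided Hom-spaces $\Hom_{A_\sigma}(M_\sigma,N_\sigma)\rightrightarrows\Hom_{A_\sigma}(M_\sigma\otimes A_\sigma,N_\sigma)$, applies the already-established one-sided identification \cite[Proposition 3.2]{CS2} to each vertex, and takes equalizers; you instead rebuild that identification by hand in the bimodule setting, going one way via $(\id_N\otimes m_T)\circ(f\otimes\id_T)$ restricted to coinvariants and back via the bijectivity of the Galois map. Both routes are valid: the paper's equalizer trick extracts the two-sided statement from the known one-sided one with no further Galois-theoretic work, whereas your direct construction is self-contained but leaves you to verify $A_\sigma$-bilinearity, naturality, and mutual inverseness explicitly --- precisely the bookkeeping you flag, and which your sketch handles correctly.
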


\begin{proof}
(1): It suffices to show that, for any $f\in \Hom_\kk(M,N)$ and $m\in M$, the $\kk$-linear map $\rho(f): M\to N\otimes H$ given by $\rho(f)(m)=\sum f(m_0)_0\otimes S^{-1}(m_1)f(m_0)_1$ is $A$-bilinear, where the $A$-bimodule structure on $N\otimes H$ is induced by $N$. By \cite[Lemma 2.1]{CG05}, it is left $A$-linear. For right $A$-linearity, for any $a\in A$, we have 
\begin{align*}
\rho(f)(ma)&=\sum f((ma)_0)_0\otimes S^{-1}((ma)_1)f((ma)_0)_1\\
&=\sum f(m_0a_0)_0\otimes S^{-1}(m_1a_1)f(m_0a_0)_1\\ 
&=\sum f(m_0)_0a_0\otimes S^{-1}(a_2)a_1S^{-1}(m_1)f(m_0)_1\\
&=\sum f(m_0)_0a\otimes S^{-1}(m_1)f(m_0)_1\\
&=(\rho(f)(m))a,
\end{align*}
where the third equality uses the right module structure of $A$ on $N$ and the fact that $H$ is commutative. 

(2): This follows from (1) and \cite[Lemma 1.1]{CG05}.

(3) and (4): The proofs are similar to \Cref{lem:relHom}(3)-(4) by noting that $R$ is finite-dimensional in (4). 

(5): Note that $\Hom_{A\otimes A^{\op}}(M_\sigma,N_\sigma)$ can be realized as an equalizer 
\begin{align*}
\xymatrix{
\Hom_{A_\sigma\otimes A_\sigma^{\op}}(M_\sigma,N_\sigma)\ar[r]&\Hom_{A_\sigma}(M_\sigma,N_\sigma)\ar@<-0.5ex>[rr]_-{f_\sigma\mapsto \triangleleft\circ (f_\sigma\otimes \id_{A_\sigma})}\ar@<0.5ex>[rr]^-{f_\sigma\mapsto f_\sigma\circ \triangleleft} && \Hom_{A_\sigma}(M_\sigma\otimes A_\sigma,N_\sigma),
}
\end{align*}
where the upper and lower symbols $\triangleleft$ denote the actions $M_\sigma\otimes A_\sigma\to M_\sigma$ and $N_\sigma\otimes A_\sigma\to N_\sigma$, respectively. We now apply \cite[Proposition 3.2]{CS2} to identify the right part of the above two arrows with the ones below  
\begin{align*}
\xymatrix{
\Hom_{A}^H(M,N\otimes T)\ar@<-0.5ex>[rr]_-{f\mapsto \triangleleft\circ (f\otimes \id_A)}\ar@<0.5ex>[rr]^-{f\mapsto f\circ \triangleleft} && \Hom_{A}^H(M\otimes A,N\otimes T).
}
\end{align*}
Their equalizer is $\Hom_{A\otimes A^{\op}}^H(M,N\otimes T)$, which yields natural isomorphisms 
\[\Hom_{A_\sigma\otimes A_\sigma^{\op}}(M_\sigma,N_\sigma)\cong \Hom_{A\otimes A^{\op}}^H(M,N\otimes T)\cong (\Hom_{A\otimes A^{\op}}(M,N)\otimes T)^{\co H}\cong \Hom_{A\otimes A^{\op}}(M,N)_\sigma\]
of $H^\sigma$-comodules. 
\end{proof}

We denote by $T^{\op}$ the opposite algebra of $T$. With the same comodule structures as $T$, $T^{\op}$ is a $H$-$H^{\sigma^{\op}}$-bi-Galois object, where $\sigma^{\op}(a,b)=\sigma(b,a)$, for any $a,b\in H$, is a right 2-cocycle on $H$. This implies another monoidal equivalence between ${\comod}(H)$ and ${\comod}(H^{\sigma^{\op}})$ via $(-) ~\square_H ~T^{\op}$, which we denote by $(-)_{\sigma^{\op}}$.

\begin{thm}
\label{thm:FGCHC}
Let $H$ be a finite-dimensional commutative Hopf algebra and $A$ be a right $H$-comodule algebra.  
The Hochschild cohomology ring of $A$ is a graded right $H$-comodule algebra. Moreover, for any right 2-cocycle $\sigma$ on $H$, we have 
    \[{\rm HH}^\bullet(A_\sigma)~\cong~ \left( {\rm HH}^\bullet(A)\right)_{\sigma^{\op}}\] 
as graded right comodule algebras over $H^{\sigma^{\op}}$ for some right 2-cocycle $\sigma^{\op}$ on $H$. As a consequence, if the Hochschild cohomology ring of $A$ is finitely generated, then so is that of $A_\sigma$. 
\end{thm}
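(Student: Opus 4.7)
My plan is to compute ${\rm HH}^\bullet(A)$ via the bar resolution of $A$ as an $A$-bimodule and then transport everything under the monoidal cocycle-twist equivalence. Recall the bar resolution $B_\bullet(A)\twoheadrightarrow A$, with $B_n(A)=A^{\otimes(n+2)}$ and the standard face differentials, is a resolution of $A$ by free $A$-bimodules. Since the right $H$-coaction on $A$ extends diagonally to each tensor power and the multiplication $m:A\otimes A\to A$ is $H$-colinear, every $B_n(A)$ lies in $\!_{A}\mathcal{M}^H_A$ and the differentials are both $H$-colinear and $A$-bilinear. Applying $\Hom_{A\otimes A^{\op}}(-,A)$ and using \Cref{lem:bimodule}(1) places each cochain group in $\comod(H)$, while \Cref{lem:bimodule}(3) shows that the Yoneda composition, which realizes the cup product on ${\rm HH}^\bullet(A)$, is $H^{\op}$-colinear. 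Commutativity of $H$ identifies $H=H^{\op}$ as bialgebras, so ${\rm HH}^\bullet(A)$ is a graded right $H$-comodule algebra.

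For the main isomorphism, apply the monoidal twist equivalence $F=(-)_\sigma$ from $\comod(H)$ to $\comod(H^\sigma)$ to the entire bar resolution. Because $F$ is exact and monoidal and $m$ is $H$-colinear, $F$ sends $B_\bullet(A)$ to a resolution of $A_\sigma$ by free $A_\sigma$-bimodules in $\!_{A_\sigma}\mathcal{M}^{H^\sigma}_{A_\sigma}$, naturally identified with $B_\bullet(A_\sigma)$. \Cref{lem:bimodule}(5) then yields an $H^\sigma$-colinear cochain-level isomorphism
\[
\Hom_{A_\sigma\otimes A_\sigma^{\op}}\bigl(B_\bullet(A_\sigma),\,A_\sigma\bigr) ~\cong~ \Hom_{A\otimes A^{\op}}\bigl(B_\bullet(A),\,A\bigr)_\sigma,
\]
and passing to cohomology (using exactness of $(-)_\sigma$) gives ${\rm HH}^\bullet(A_\sigma)\cong {\rm HH}^\bullet(A)_\sigma$ as graded $H^\sigma$-comodules. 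To promote this to an algebra isomorphism, I trace the cup product: it corresponds under the identification above to the $\sigma$-twist of Yoneda composition, and \Cref{lem:bimodule}(3) tells us that composition is $H^{\op}$-colinear rather than $H$-colinear. Consequently the resulting algebra structure is that of the twist by $\sigma^{\op}$, where $\sigma^{\op}(a,b):=\sigma(b,a)$. A direct check using the right 2-cocycle identity and the commutativity of $H$ confirms that $\sigma^{\op}$ is again a right 2-cocycle on $H$, yielding ${\rm HH}^\bullet(A_\sigma)\cong {\rm HH}^\bullet(A)_{\sigma^{\op}}$ as graded right $H^{\sigma^{\op}}$-comodule algebras.

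Finally, for any $\mathbb{N}$-graded $H$-comodule algebra $B$ and any right 2-cocycle $\tau$ on $H$, the twist $B_\tau$ coincides with $B$ as a graded vector space and has product $x\cdot_\tau y=\sum \tau(x_1,y_1)\,x_0 y_0$. Using $\tau^{-1}$ one recovers the undeformed products from the twisted ones, so an induction on degree shows that any finite homogeneous generating set for $B$ also generates $B_\tau$. Applying this with $B={\rm HH}^\bullet(A)$ and $\tau=\sigma^{\op}$ transfers finite generation from ${\rm HH}^\bullet(A)$ to ${\rm HH}^\bullet(A)_{\sigma^{\op}}\cong {\rm HH}^\bullet(A_\sigma)$. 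I anticipate the main obstacle to be the careful bookkeeping between $\sigma$ and $\sigma^{\op}$, driven by the fact that the natural $H$-coaction on $\Hom$ spaces uses $S^{-1}$ and makes composition $H^{\op}$-colinear; once this asymmetry is handled, the remaining steps are routine applications of \Cref{lem:bimodule}.
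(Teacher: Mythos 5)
Your proof follows essentially the same route as the paper's: the bar resolution viewed in $\!_{A}\mathcal M^{H}_A$, the comodule structures and twisting isomorphisms of \Cref{lem:bimodule}, and the $H^{\op}$/$\sigma^{\op}$ bookkeeping forced by the colinearity of composition. Two small points of divergence. First, the paper realizes the cup product by working with the differential graded endomorphism algebra $\Hom_{A\otimes A^{\op}}(P_\bullet,P_\bullet)$ rather than the complex $\Hom_{A\otimes A^{\op}}(P_\bullet,A)$: there the product is literally composition, so \Cref{lem:bimodule}(3) applies on the nose, whereas your appeal to ``Yoneda composition'' on $\Hom_{A\otimes A^{\op}}(B_\bullet(A),A)$ tacitly requires lifting cocycles to chain maps compatibly with the $H$-coaction --- harmless, but the endomorphism-algebra formulation sidesteps it. Second, for finite generation the paper simply cites \cite[Proposition 3.1(1)]{M2005}; your self-contained induction is fine in spirit, but note that in the identity $xy=\sum\tau^{-1}(x_1,y_1)\,x_0\cdot_\tau y_0$ the elements $x_0,y_0$ are comodule translates of $x,y$, not generally in the span of your chosen generators, so you must first enlarge the finite homogeneous generating set to a finite-dimensional graded subcomodule (possible by the fundamental theorem of comodules) before the induction on degree closes up.
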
 
\begin{proof}
Note that $A\in \,_A{\mathcal M}_A^H$. Let 
\[
   P_{\bullet}: \cdots \to P_{2}\to P_1\to P_0\to 0
\]
be the bar resolution for $A$. It is straightforward to check that $P_{\bullet}$ is a resolution for $A$ in $\!_A{\mathcal M}_A^H$ consisting of finite-dimensional free $A$-bimodules. Now we follow the setup before \cite[Lemma 5.9]{KKZ} to use the resolution $P_{\bullet}$ to define a right $K$-coaction on the Hochschild cohomology ring ${\rm HH}^\bullet(A)$. By the construction, we have
\[
{\rm HH}^\bullet(A)=\bigoplus_{n\ge 0} {\rm H}^n\left(\Hom_{A\otimes A^{\op}}(P_{\bullet},A)\right)=\bigoplus_{n\ge 0}{\rm H}^n\left(\Hom_{A\otimes A^{\op}}(P_{\bullet},P_{\bullet})\right).
\]
By definition, the endomorphism algebra $\Hom_{A\otimes A^{\op}}(P_{\bullet},P_{\bullet})$ is a differential graded algebra, where the $\mathbb Z$-grading comes from the complex degree of $P_{\bullet}$. Moreover, the $n$-th term of $\Hom_{A\otimes A^{\op}}(P_{\bullet},P_{\bullet})$ is
\begin{equation}\label{eq:EndHom}
\Hom_{A\otimes A^{\op}(P_{\bullet},P_{\bullet})}^n=\prod_i \Hom_{A\otimes A^{\op}}(P_i,P_{i-n}),
   \end{equation}
where an element in $\Hom_{A\otimes A^{\op}}(P_{\bullet},P_{\bullet})^n$ is written as $(f_i)$ with $f_i: P_i\to P_{i-n}$ for all $i$. The $n$-th differential of $\Hom_{A\otimes A^{\op}}(P_{\bullet},P_{\bullet})$ is $d^n: \Hom_{A\otimes A^{\op}}(P_{\bullet},P_{\bullet})^n\to \Hom_{A\otimes A^{\op}}(P_{\bullet},P_{\bullet})^{n+1}$ defined by
\[
d^n: \left(f_i: P_i\to P_{i-n}\right)\mapsto \left(d_P\circ f_i-(-1)^nf_{i-1}\circ d_P: P_i\to P_{i-n-1}\right),
\]
where $d_P$ is the differential of $P_{\bullet}$. By \Cref{lem:bimodule}(1), each term $\Hom_{A\otimes A^{\op}}(P_i,P_{i-n})$ has a right $H$-comodule structure and hence we have a right $H$-comodule structure on $\Hom_{A\otimes A^{\op}}(P_{\bullet},P_{\bullet})^n$, where we use the fact that $H$ is finite-dimensional. It is straightforward to check the following claims using reasoning similar to that of \cite[Lemma 5.9]{KKZ}.
\begin{itemize}
    \item[(a)] $\Hom_{A\otimes A^{\op}}(P_{\bullet},P_{\bullet})$ is a right $H^{\op}$-comodule algebra.
    \item[(b)] The right $H^{\op}$-coaction on $\Hom_{A\otimes A^{\op}}(P_{\bullet},P_{\bullet})$ preserves the $\mathbb Z$-grading.
    \item[(c)] The differential $d^n$ is $H^{\op}$-colinear. 
\end{itemize}
Indeed, part (a) is from \Cref{lem:bimodule}(3). Part (b) can be derived from the definition of $H$-comodule structure on $\Hom_{A\otimes A^{\op}}(P_{\bullet},P_{\bullet})$ via \eqref{eq:coactionHom}. Part (c) follows from the fact that $d_P$ is $H$-colinear. Since the Hochschild cohomology ring ${\rm HH}^\bullet(A)$ can be computed as the cohomology ring of the differential graded $H$-comodule algebra $\Hom_{A\otimes A^{\op}}(P_{\bullet},P_{\bullet})$, our first result follows immediately. 

Next, we note that 
\[
P_{\bullet \sigma}:  \cdots\to  (P_2)_\sigma\to (P_1)_\sigma\to (P_0)_\sigma\to 0
\]
is a resolution of $A_\sigma$ in $\!_{A_\sigma}{\mathcal M}_{A_\sigma}^{H^\sigma}$, where each term is a free $A_\sigma$-bimodule. In particular, $P_{\bullet \sigma}$ is a projective resolution of the $A_\sigma$-bimodule $A_\sigma$. By a similar discussion as before, we can also view $\Hom_{A_\sigma\otimes A_\sigma^{\op}}(P_{\bullet \sigma},P_{\bullet \sigma})$ as a differential $\mathbb Z$-graded algebra in the monoidal category ${\rm comod}((H^\sigma)^{\op})$. It is easy to check that $(H^\sigma)^{\op}
\cong (H^{\op})^{\sigma^{\op}}=H^{\sigma^{\op}}$. Therefore, by \Cref{lem:bimodule}, we have the following isomorphisms
\begin{align*}
{\rm HH}^\bullet(A_\sigma)&={\rm H}^\bullet\left(\Hom_{A_\sigma\otimes A_\sigma^{\op}}(P_{\bullet \sigma},P_{\bullet \sigma})\right)\\
&\cong{\rm H}^\bullet\left(\Hom_{A\otimes A^{\op}}(P_{\bullet},P_{\bullet})_{\sigma^{\op}}\right)\\
&\cong{\rm H}^\bullet\left(\Hom_{A\otimes A^{\op}}(P_{\bullet},P_{\bullet})\right)_{\sigma^{\op}}\\
&\cong {\rm HH}^\bullet(A)_{\sigma^{\op}}
\end{align*}
of $\mathbb Z$-graded right $H^{\sigma^{\op}}$-comodule algebras. The last assertion is a consequence of \cite[Proposition 3.1(1)]{M2005} applied on ${\rm HH}^\bullet(A)_{\sigma^{\op}}$. 
\end{proof}

\section{The quantum-symmetric equivalence class of a Koszul AS-regular algebra}

To conclude our paper, we provide an answer to \Cref{QuestionB} in the introduction for a  Koszul AS-regular algebra (without the requirement of a finite GK-dimension). We study its quantum-symmetric equivalence class by combining the results of Raedschelders and Van den Bergh with ours on homological Morita--Takeuchi invariants. We show that all Koszul AS-regular algebras of a fixed global dimension belong to a single quantum-symmetric equivalence class.

\subsection{Determining the quantum-symmetric equivalence class}

In \cite[Theorem 7.2.3]{vdb2017}, Raedschelders and Van den Bergh proved the Morita--Takeuchi equivalence between Manin’s universal quantum groups associated with Koszul AS-regular algebras of the same dimension (in fact, their theorem holds more generally without any assumption of finite GK-dimension on the algebra). Using their result, we are now able to deduce the following.

\begin{thm}
\label{thm:TwistAS}
Suppose $\kk$ is algebraically closed. Let $A$ be a  Koszul AS-regular algebra, and $B$ be any connected graded algebra generated in degree one. Then $A$ and $B$ are quantum-symmetrically equivalent if and only if $B$ is a Koszul AS-regular algebra of the same global dimension as $A$. In this case, $A$ and $B$ are 2-cocycle twists of one another if and only if they have the same Hilbert series.
\end{thm}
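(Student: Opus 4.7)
The plan is to combine the Morita--Takeuchi invariance results of \Cref{sec:properties} with the theorem of Raedschelders and Van den Bergh on universal quantum groups of Koszul AS-regular algebras, and then to single out 2-cocycle twists among general Morita--Takeuchi equivalences by the cleftness of the associated bi-Galois object.

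For the first biconditional, the forward direction is a direct application of the results of \Cref{sec:properties}: a quantum-symmetric equivalence is in particular a Morita--Takeuchi equivalence that sends $A$ to $B$ as comodule algebras, and Koszulity is Morita--Takeuchi invariant by \Cref{Cor:gldim}(2), while AS-regularity with a fixed global dimension is preserved by \Cref{thm:Torreg}(5) (or, to avoid imposing noetherianity, by transporting the Frobenius structure on the Koszul dual Ext-algebra via \Cref{cor:Frobconn} as in the proof of \Cref{thm:AS}). For the backward direction I would invoke \cite[Theorem~7.2.3]{vdb2017}, which furnishes a monoidal equivalence $\comod(\underline{\rm aut}^r(A)) \cong \comod(\underline{\rm aut}^r(B))$ whenever $A$ and $B$ are Koszul AS-regular of the same global dimension, and then upgrade this weak quantum-symmetric equivalence to a genuine quantum-symmetric equivalence by verifying that the bi-Galois object $T$ underlying their equivalence actually matches $A$ with $B$ as comodule algebras, that is, $B \cong A \square_{\underline{\rm aut}^r(A)} T$; this identification should be traceable through the Koszul-dual construction of $T$ in their proof.

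For the second biconditional, the forward direction is immediate since $A_\sigma$ and $A$ coincide as graded vector spaces, so their Hilbert series agree. For the converse, suppose $A$ and $B$ have the same Hilbert series. The first part produces a bi-Galois object $T$ with $B \cong A \square_{\underline{\rm aut}^r(A)} T$, and by Schauenburg's bi-Galois correspondence \cite{Sch1996}, $\underline{\rm aut}^r(B)$ is a 2-cocycle twist of $\underline{\rm aut}^r(A)$ exactly when $T$ is cleft, in which case $T \cong \underline{\rm aut}^r(A)_\sigma$ for some right 2-cocycle $\sigma$; \Cref{thm:action-Manin} then identifies $B$ with $A_\sigma$ as graded algebras. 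It therefore remains to extract cleftness of $T$ from the equality $H_A = H_B$. I would approach this by a degree-by-degree comparison: matching $\dim (A \square T)_n = \dim A_n$ in every degree, together with algebraic closedness of $\kk$, should supply enough rigidity to produce a convolution-invertible $\underline{\rm aut}^r(A)$-comodule section $\underline{\rm aut}^r(A) \to T$, which is precisely the defining data of a cleft extension.

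The principal obstacles I anticipate are (a) normalizing the Raedschelders--Van den Bergh equivalence so that it genuinely sends $A$ to $B$ as comodule algebras, i.e., upgrading weak quantum-symmetric equivalence to quantum-symmetric equivalence; and (b) deducing cleftness of the bi-Galois object $T$ from the Hilbert series hypothesis, where both the algebraic closedness of $\kk$ and the Koszul (quadratic) hypothesis should enter in an essential way.
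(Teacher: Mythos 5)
Your high-level skeleton matches the paper's: combine the Raedschelders--Van den Bergh equivalence with the Morita--Takeuchi invariance results, and then isolate 2-cocycle twists among Morita--Takeuchi equivalences. But the two obstacles you flag at the end are exactly the substance of the proof, and your proposal leaves both unresolved; moreover your fallback for the noetherianity issue points in a direction the paper does not need. Concretely: (a) to apply \Cref{thm:Torreg}(5) you need a noetherian comodule algebra, and $A$ is not assumed noetherian. The paper's fix is to prove the backward direction \emph{first}, conclude that $A$ is quantum-symmetrically equivalent to the polynomial ring $\kk[x_1,\dots,x_d]$, and then transport AS-regularity from the polynomial ring (which \emph{is} noetherian, and whose universal quantum group has bijective antipode by \cite[Corollary 5.3]{vdb2017}) to $B$. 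Your alternative via the Frobenius Ext-algebra would require knowing how $E(A)$ transforms under a \emph{general} Morita--Takeuchi equivalence, which the paper only establishes for 2-cocycle twists (\Cref{lem:dualtwist}), so as written it does not close the gap. (b) The upgrade from weak to genuine quantum-symmetric equivalence is not a normalization you can defer: the paper runs the construction through Manin's rigid category $\mathcal U$ and the fiber functors $M_A,M_B$ with $M_A(r_i)=R_i=\bigcap V^{\otimes i'}\otimes R\otimes V^{\otimes j'}$, so that the equivalence $F=G_B\circ G_A^{-1}$ sends $M_A(r_1)\mapsto M_B(r_1)$ and $M_A(r_2)\mapsto M_B(r_2)$, whence $F(A)=F(T\langle M_A(r_1)\rangle/(M_A(r_2)))=B$ as comodule algebras. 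Saying the identification ``should be traceable'' is where the proof actually lives.

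The most serious gap is in the second biconditional. Your plan to extract cleftness of $T$ from a ``degree-by-degree comparison'' of $\dim(A\,\square\,T)_n$ with $\dim A_n$ does not produce a convolution-invertible comodule section, and equality of the Hilbert series of $A$ and $B$ alone says nothing directly about the dimensions of arbitrary finite-dimensional comodules. The missing idea is the Grothendieck ring: by \cite[Corollary 15.43]{Manin2018} the Grothendieck ring of $\uaut^r(A)$ is the \emph{free} $\mathbb Z$-ring on $M_A(r_1),\dots,M_A(r_{d-1}),M_A(r_d)^{\pm1}$, and the Hilbert series hypothesis gives $\dim M_A(r_i)=\dim(A^!)_i=\dim(B^!)_i=\dim M_B(r_i)$. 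Since $F$ sends generators to generators, freeness forces $F$ to preserve the dimension of \emph{every} finite-dimensional comodule, and only then do the criteria of Bichon and Etingof--Gelaki (\cite[Theorems 1.8 and 1.17]{Bichon2014}, \cite[Proposition 4.2.2]{PavelGelaki2001}) apply to conclude that $F$ is (naturally isomorphic to) the equivalence induced by a 2-cocycle. Without the freeness of the Grothendieck ring as an intermediary, the rigidity you hope algebraic closedness will supply is not there.
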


\begin{proof}
 We follow the discussion in \cite[\S 15]{Manin2018}. For any integer $d\ge 1$, consider the rigid monoidal category $\mathcal U$ constructed in \cite[\S 15.5.1]{Manin2018}, whose objects are given by the free monoid 
\[
\Lambda=\langle r_1,\ldots,r_{d-1},r_d^{\pm 1}\rangle 
\]
and morphisms are generated by
\[
\phi_{a,b}: r_{a+b}\to r_a\otimes r_b\quad \text{and}\quad \theta_{a,b}: r_a\otimes r_d^{-1}\otimes r_b\to r_{a+b-d},
\]
satisfying certain relations. Let $A=TV/(R)$ be a Koszul, AS-regular algebra of global dimension $d$. Set $R_1=V$ and  $R_k=\bigcap_{i+j+2=k} V^{\otimes i}\otimes R\otimes V^{\otimes j}$ for $k\ge 2$, which are all finite-dimensional right comodules over Manin's universal quantum group $\uaut^r(A)$. In particular, $R_d$ is a one-dimensional invertible comodule. By \cite[\S 15.5.2]{Manin2018}, there is a monoidal functor $M_A: \mathcal U\to {\rm Vect_{fd}} (\kk)$ such that $M_A(r_i)=R_i$ and the morphisms $M_A(\phi_{a,b}): R_{a+b}\to R_a\otimes R_b$ and $M_A({\theta_{a,b}}): R_a\otimes R_d^{-1}\otimes R_b\to R_{a+b-d}$ are natural comodule morphisms over $\uaut^r(A)$. In particular, one notes that 
\[A=T\langle R_1\rangle/(R_2)=T\langle M_A(r_1)\rangle/(M_A(r_2))\]
 is a right graded comodule algebra over $\uaut^r(A)$ via its universal coaction. 
 
 Suppose $B$ is another Koszul AS-regular algebra of global dimension $d$ and let $M_B$ be the corresponding functor defined analogously to $M_A$. According to \cite[Theorem 15.44]{Manin2018}, we can construct a monoidal equivalence between ${\rm comod}\left(\uaut^r(A)\right)$ and ${\rm comod}\left(\uaut^r(B)\right)$ by using the rigid monoidal category $\mathcal U$ and the two corresponding monoidal functors $M_A, M_B: \mathcal U\to {\rm Vect_{fd}} (\kk)$ as follows.  Denote
 $\kk \mathcal U$ as the $\kk$-linear rigid monoidal category by linearizing the morphism spaces in the rigid monoidal category $\mathcal U$, where we still write $M_A: \kk \mathcal U \to {\rm comod}\left(\uaut^r(A)\right)$ as the linearized functor of $M_A$. According to \cite[Theorem 15.42]{Manin2018}, $M_A: \kk \mathcal U \to {\rm comod}\left(\uaut^r(A)\right)$ is fully faithful. Now, let ${\rm Perf}(\mathcal U^{\op})$ be the triangulated category of finite
complexes of finitely generated projective right $\mathcal U$-modules. Note that ${\rm Perf}(\mathcal U^{\op})$ is 
 a triangulated monoidal category by extending the tensor product $\kk\mathcal U(-, r_i) \otimes \kk\mathcal U(-, r_j)=\kk\mathcal U(-, r_i\otimes r_j)$ to complexes. Hence the functor $M_A: \kk \mathcal U \to {\rm comod}\left(\uaut^r(A)\right)$ extends to an exact monoidal functor 
 \[
M_A: {\rm Perf}(\mathcal U^{\op}) \to D^b({\rm comod}\left(\uaut^r(A)\right)),
\]
 which we still denote by $M_A$. A result of \cite[Theorem 15.42]{Manin2018} shows that the above functor is indeed an equivalence of monoidal triangulated categories. One then can use the above derived equivalence to translate the standard $t$-structure on the derived category $D^b({\rm comod}\left(\uaut^r(A)\right)$ to that on ${\rm Perf}(\mathcal U^{\op})$ (for background on derived categories and $t$-structures, see e.g., \cite{Yekutieli}). This gives a monoidal equivalence $G_A$ between an intrinsic monoidal full subcategory $\mathcal C$ of ${\rm Perf}(\mathcal U^{\op})$ and ${\rm comod}\left(\uaut^r(A)\right)$ (see the description of $\mathcal C$ in the proof of \cite[Theorem 7.2.3]{vdb2017}). We write the monoidal inverse equivalence $G_A^{-1}: {\rm comod}\left(\uaut^r(A)\right) \xrightarrow{\sim} \mathcal C$. Applying the same argument to ${\rm comod}\left(\uaut^r(B)\right)$ with respect to $M_B$, we get another monoidal equivalence $G_B: \mathcal C\xrightarrow{\sim} {\rm comod}\left(\uaut^r(B)\right)$. As a consequence, there is a monoidal equivalence 
 \[F: {\rm comod}\left(\uaut^r(A)\right)\xrightarrow{G_A^{-1}}\mathcal C\xrightarrow{G_B}{\rm comod}\left(\uaut^r(B)\right).\] 
 By its construction,  $F$ sends $M_A(r_i)$ to $M_B(r_i)$ for $1\leq i\leq d$. In particular, 
\[
F(A)=F\left(T\langle M_A(r_1)\rangle/(M_A(r_2)\right)=T\langle M_B(r_1)\rangle/(M_B(r_2))=B
\]
as graded comodule algebras over $\uaut^r(B)$. Thus, $A$ and $B$ are quantum-symmetrically equivalent. 

For the other direction, suppose $A$ and $B$ are quantum-symmetrically equivalent. The previous discussion shows that $A$ is quantum-symmetrically equivalent to the polynomial ring $R=\kk[x_1,\ldots, x_d]$ since $R$ is  Koszul AS-regular of global dimension $d$. Therefore, $B$ is quantum-symmetrically equivalent to $R$, since quantum-symmetrical equivalence is an equivalence relation. By \Cref{Cor:gldim}(2), $B$ is Koszul. Moreover since $R$ is noetherian and $\uaut(R)$ has bijective antipode \cite[Corollary 5.3]{vdb2017}, \Cref{thm:Torreg}(5) implies that $B$ is AS-regular of global dimension $d$. 

Now assume that $A$ and $B$ are two Koszul AS-regular algebras. If they have the same Hilbert series, then we have 
\begin{equation}\label{eq:dimeq}
   \dim_\kk M_A(r_i)=\dim_\kk (A^!)_i=\dim_\kk (B^!)_i=\dim_\kk M_B(r_i) 
\end{equation} 
for all $1\leq i\leq d$. By \cite[Corollary 15.43]{Manin2018}, we know that the Grothendieck ring of $\uaut^r(A)$  is the free $\mathbb Z$-ring $\mathbb Z\langle M_A(r_1),\ldots, M_A(r_{d-1}), M_A(r_d)^{\pm 1}\rangle$, and the same is true for $\uaut^r(B)$. Then the monoidal equivalence $F: {\rm comod}\left(\uaut^r(A)\right)\xrightarrow{\sim}{\rm comod}\left(\uaut^r(B)\right)$ induces a ring isomorphism $F$ (by abuse of notation) between the Grothendieck rings of $\uaut^r(A)$ and $\uaut^r(B)$ by sending $M_A(r_i)$ to $M_B(r_i)$ for all $1\leq i\leq d$. For any finite-dimensional comodule $V$ over $\uaut^r(A)$, we can consider $V$ as an element in the Grothendieck ring of $\uaut^r(A)$ and write it as a $\mathbb Z$-linear combination of some tensor products of the free generators  $M_A(r_1),\ldots,M_A(r_d)^{\pm 1}$. By applying $F$, one can similarly write $F(V)$ as the $\mathbb Z$-linear combination of the tensor products now in terms of $M_B(r_1),\ldots,M_B(r_d)^{\pm 1}$ in the Grothendieck ring of $\uaut^r(B)$.  This implies that $\dim_\kk F(V)=\dim_\kk V$ due to \eqref{eq:dimeq} and hence $F$ preserves dimensions. As a consequence, $\uaut^r(B)$ is a 2-cocycle twist of $\uaut^r(A)$ (see \cite[Theorems 1.8 and 1.17]{Bichon2014} and \cite[Proposition 4.2.2]{PavelGelaki2001}), where $F$ is naturally isomorphic to the monoidal equivalence given by the 2-cocycle twist. Since $F$ maps the comodule algebra $A$ over $\uaut^r(A)$ to the comodule algebra $B$ over $\uaut^r(B)$, we know $B$ is a 2-cocycle twist of $A$. Conversely, if $A$ and $B$ are 2-cocycle twists of each other, they share the same Hilbert series by construction.
\end{proof}

Let $A$ be an AS-regular algebra of global dimension $d$. By \cite[Theorem 5.11]{Smith1994}, $A$ is Koszul if it has the Hilbert series of the form $1/(1-t)^d$. Conversely, if $A$ is Koszul and has finite GK-dimension, it is unknown whether the Hilbert series of $A$ is of the form $1/(1-t)^d$ (see e.g., the survey paper \cite{Rogalski2022}). If this were true, then \Cref{thm:TwistAS} would imply that all Koszul AS-regular algebras of global dimension $d$ and finite GK-dimension are 2-cocycle twists of each other.

The following corollary is straightforward. 

\begin{Cor}\label{Cor:qs-equivalence}
The quantum-symmetric equivalence class of the polynomial algebra $\kk[x_1,\ldots,x_d]$ consists of all Koszul AS-regular algebras of dimension $d$.    
\end{Cor}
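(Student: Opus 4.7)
The plan is to deduce this as a direct consequence of \Cref{thm:TwistAS}, applied with $A = \kk[x_1,\ldots,x_d]$. The first step is to verify that the polynomial algebra $\kk[x_1,\ldots,x_d]$ is itself a Koszul AS-regular algebra generated in degree one, of global dimension $d$; this is standard, since its Koszul dual is the exterior algebra $\Lambda(x_1^*,\ldots,x_d^*)$, which is a finite-dimensional Frobenius algebra, ensuring both the Koszul property and AS-regularity of dimension $d$ (see e.g.\ \cite[Corollary D]{LPWZ} combined with the Koszul resolution).

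With this in place, both inclusions follow immediately from the biconditional in \Cref{thm:TwistAS}. For one direction, if $B$ is any connected graded algebra finitely generated in degree one that is quantum-symmetrically equivalent to $\kk[x_1,\ldots,x_d]$, then \Cref{thm:TwistAS} (taking $A = \kk[x_1,\ldots,x_d]$) forces $B$ to be Koszul AS-regular of global dimension $d$. For the reverse direction, if $B$ is any Koszul AS-regular algebra of global dimension $d$, then since both $B$ and $\kk[x_1,\ldots,x_d]$ share the same global dimension and the Koszul AS-regular property, \Cref{thm:TwistAS} (now taking $A = \kk[x_1,\ldots,x_d]$ and using the equivalence relation in the forward direction) guarantees that they are quantum-symmetrically equivalent.

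There is essentially no obstacle here, since the corollary is a packaged restatement of \Cref{thm:TwistAS} for the distinguished choice $A = \kk[x_1,\ldots,x_d]$; the only minor point worth remarking is that quantum-symmetric equivalence is indeed an equivalence relation (symmetry and transitivity following from the fact that monoidal equivalences of comodule categories compose and invert, and send the distinguished comodule algebra accordingly), which is implicit in the proof of \Cref{thm:TwistAS}. The write-up should therefore consist of one sentence identifying $\kk[x_1,\ldots,x_d]$ as Koszul AS-regular of dimension $d$, followed by an appeal to \Cref{thm:TwistAS} in both directions.
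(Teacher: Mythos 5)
Your proposal is correct and matches the paper exactly: the paper states only that the corollary is ``straightforward,'' meaning precisely the specialization of \Cref{thm:TwistAS} to $A=\kk[x_1,\ldots,x_d]$ (which is Koszul AS-regular of global dimension $d$), with both inclusions read off from the biconditional. The only caveat worth noting is that the hypothesis that $\kk$ is algebraically closed from \Cref{thm:TwistAS} is implicitly carried over to the corollary.
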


\subsection{The 4-dimensional Sklyanin algebras and a conjecture}

By \cite[Theorem 0.3]{SSta92}, when $\{\alpha,\beta,\gamma\}$ is not equal to the triples $\{-1,1,\gamma\}, \{\alpha,-1,1\}$, or $\{1,\beta,-1\}$, the family of 4-dimensional Sklyanin algebras $S(\alpha,\beta,\gamma)$ is Koszul AS-regular with the same Hilbert series as that of the commutative polynomial ring $\kk[x_0,x_1,x_2,x_3]$ in 4 variables. Hence, by \Cref{thm:TwistAS}, $S(\alpha,\beta,\gamma)$ is a 2-cocycle twist of $\kk[x_0,x_1,x_2,x_3]$, which we give a conjectural description of as follows. 

Let $V$ be a 4-dimensional vector space over $\kk$ with basis $\{x_0,x_1,x_2,x_3\}$. Then the polynomial ring $\kk[x_0,x_1,x_2,x_3]$ is the superpotential algebra $A(e,2)$ with the superpotential 
\[
e=\sum_{\sigma\in S_4} {\rm sgn}(\sigma)x_{\sigma(0)}\,x_{\sigma(1)}x_{\sigma(2)}x_{\sigma(3)}\in V^{\otimes 4}.
\]
One may explicitly describe $S(\alpha,\beta,\gamma)$ as a superpotential algebra $A(f,2)$ for some superpotential $f\in V^{\otimes 4}$ by \cite[Lemma 8.5]{Chirvasitu-Walton-Wang2019}. Applying the cogroupoid $\mathcal{GL}_4$ associated to $4$-preregular forms constructed in \cite[Definition 3.1.1]{HNUVVW2}, we obtain the Hopf algebra $\mathcal{GL}_4(e,e)$ that is isomorphic to Manin's universal quantum group $H=\underline{\rm aut}^l(\kk[x_0,x_1,x_2,x_3])$, and likewise $\mathcal{GL}_4(f,f)$ is isomorphic to $K=\uaut^l(S(\alpha,\beta,\gamma))$. Moreover, there is a $K$-$H$-bicomodule algebra $T$ (denoted by $\mathcal{GL}_4(f,e)$ in \cite{HNUVVW2}) given by the $\kk$-algebra with $34$ generators
    \[\mathbb A=(a_{ij})_{1\leq i,j\leq 4}, \qquad \mathbb B=(b_{ij})_{1\leq i,j\leq 4}, \qquad D^{\pm 1},\]
subject to the relations 
\begin{equation*}
\left. 
\begin{aligned}
    \sum_{1\leq i_1,i_2,i_3,i_4\leq 4}{f}_{i_1 i_2i_3i_4}a_{i_1j_1}a_{i_2j_2}a_{i_3j_3} a_{i_4j_4} &= {e}_{j_1j_2j_3j_4}D, &\textnormal{ for any } 1\leq j_1,j_2,j_3, j_4\leq 4,\\
    \sum_{1\leq i_1,i_2,i_3,i_4\leq 4}{e}_{i_1i_2i_3i_4}b_{i_4j_4}b_{i_3j_3}b_{i_2j_2} b_{i_1j_1} &= {f}_{j_1j_2j_3j_4}D^{-1},  &\textnormal{ for any } 1\leq j_1,j_2,j_3,j_4\leq 4,\\
    DD^{-1} &= D^{-1}D=1, \text{ and } \\
    \mathbb{A}\mathbb{B} &= \mathbb{I}_{4 \times 4}.
\end{aligned}\right\}
\end{equation*}
We have the following expectation: 
\begin{conj}
\label{conj-bigalois}
The above bicomodule algebra $T$ is a nonzero cleft bi-Galois object, consequently yielding a 2-cocycle which twists $H$ to $K$.
\end{conj}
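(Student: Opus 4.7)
The plan is to establish \Cref{conj-bigalois} in three stages: show that $T$ is nonzero, then that it is an $H$-$K$-bi-Galois object, and finally that it is cleft. The 2-cocycle twist conclusion will then follow from Schauenburg's correspondence between cleft bi-Galois objects and 2-cocycle twists \cite{Sch1996}.

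For the nonzero-ness, I would first invoke \Cref{thm:TwistAS}(3): since $\kk[x_0,x_1,x_2,x_3]$ and $S(\alpha,\beta,\gamma)$ are both Koszul AS-regular of global dimension $4$ with Hilbert series $(1-t)^{-4}$, they are 2-cocycle twists of each other, and so by Schauenburg's theorem there exists \emph{some} cleft $K$-$H$-bi-Galois object, call it $T'$. The universal property of the cogroupoid construction $\mathcal{GL}_4$ from \cite{HNUVVW2} should then produce a bicomodule algebra homomorphism $T \to T'$: the natural $K$-$H$-bicomodule structure maps of $T'$, applied to the generators of the underlying $4$-dimensional comodule, produce candidate images for $\mathbb{A}, \mathbb{B}, D^{\pm 1}$ in $T'$ that satisfy the defining preregular-form relations for the forms $e$ and $f$. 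This simultaneously shows $T \neq 0$ and identifies it with (a cover of) $T'$.

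Next, to establish the bi-Galois property, I would verify that the canonical Galois maps
\[
\beta_\ell: T \otimes T \to K \otimes T, \qquad \beta_r: T \otimes T \to T \otimes H,
\]
induced respectively by the left $K$-coaction and right $H$-coaction, are bijective. The inverses should be expressible through the invertibility of the matrix $\mathbb{A}$ (with inverse $\mathbb{B}$) together with $D^{\pm 1}$, mirroring how the antipode formulas for $\uaut^l(A)$ are constructed in \Cref{lem:ManinM} via the cogroupoid "composition" morphisms of $\mathcal{GL}_4$. Alternatively, by faithfully flat descent, it suffices to upgrade the morphism $T \to T'$ of the previous paragraph to an isomorphism; comparing dimensions of graded components using the common Hilbert series should close this step.

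Finally, cleftness—equivalent to producing a convolution-invertible $H$-comodule map $j: H \to T$—is the subtlest point and the main obstacle I anticipate. A direct approach would construct $j$ by sending the matrix coalgebra generators of $H$ (inherited from $\uaut^l(\kk[x_0,\ldots,x_3])$) to the corresponding generators $\mathbb{A}, \mathbb{B}, D^{\pm 1}$ of $T$, extending multiplicatively, and checking convolution invertibility via the antipode-like relations $\mathbb{A}\mathbb{B} = \mathbb{I}$ and $DD^{-1}=1$. Bi-Galois objects need not be cleft in general, and verifying convolution invertibility of a candidate cleaving map between these infinite-dimensional Hopf algebras requires careful coalgebra-theoretic bookkeeping at each graded degree. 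A more structural route would exploit the cogroupoid transitivity principle: if one can show, using the classical (commutative) nature of $\kk[x_0,\ldots,x_3]$, that every $H$-bi-Galois object is cleft, then cleftness of $T$ is automatic from its bi-Galois structure. I expect the latter strategy—reducing cleftness to a general property of the "base" Hopf algebra $H$—to be the most promising route to completely resolve the conjecture.
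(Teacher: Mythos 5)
This statement is left \emph{open} in the paper: it is stated as \Cref{conj-bigalois} precisely because the authors do not prove it, so there is no proof of record to compare yours against. Your proposal is a plausible research plan, but as written it does not close the conjecture, and its most concrete step is wrong-headed. A cleaving map $j\colon H\to T$ must be a convolution-invertible right $H$-comodule map, but it need not --- and in this situation cannot --- be an algebra map: sending the matrix generators of $H$ to $\mathbb{A},\mathbb{B},D^{\pm1}$ and ``extending multiplicatively'' is not well-defined, because the defining relations of $H=\mathcal{GL}_4(e,e)$ (involving only the form $e$) are not among the relations of $T=\mathcal{GL}_4(f,e)$ (which mix $e$ and $f$). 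So your ``direct approach'' to cleftness fails at the first step, and your fallback --- that every $H$-bi-Galois object over $\uaut^l(\kk[x_0,\ldots,x_3])$ is cleft --- is asserted without any argument; for infinite-dimensional Hopf algebras bi-Galois objects need not be cleft, and establishing such a statement for this particular $H$ is itself a nontrivial open problem. Likewise, your proposed dimension count to upgrade $T\to T'$ to an isomorphism presupposes knowledge of the Hilbert series of $T$, which is exactly what is unknown: the entire difficulty of the conjecture is ruling out that the $34$ generators and their relations collapse $T$ to zero.

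That said, the skeleton of your first two stages points in a sensible direction, and there is a cleaner way to finish that you miss. \Cref{thm:TwistAS}(3) (via the Grothendieck-ring argument and \cite{Bichon2014}) does produce an abstract cleft bi-Galois object $T'$ realizing the monoidal equivalence $F$ with $F(\kk[x_0,\ldots,x_3])\cong S(\alpha,\beta,\gamma)$. If you can exhibit inside $T'$ a transition matrix for $F(V)$ together with the image of the determinant-like comodule $R_4$, and verify that these satisfy the $\mathcal{GL}_4(f,e)$ relations --- which requires showing $F(\kk e)=\kk f$ inside $F(V^{\otimes 4})$ --- then the universal property of $T$ gives a bicomodule algebra map $T\to T'$, whence $T\neq 0$; Bichon's connected-cogroupoid theorem then makes $T$ bi-Galois, and the standard fact that any homomorphism of Hopf--Galois objects is an isomorphism gives $T\cong T'$, hence cleft, with no dimension count and no separate cleftness argument needed. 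The genuine gap in your proposal (and the reason the authors state this as a conjecture) is the verification that the candidate transition data in $T'$ actually satisfies the preregular-form relations defining $T$; until that is carried out, nonvanishing of $T$ remains unproven.
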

If \Cref{conj-bigalois} holds, take any right $H$-comodule isomorphism 
$\phi: H\to T$
that is convolution invertible with inverse denoted by $\phi^{-1}: H \to T$. This yields a left 2-cocycle $\sigma$ on $H$ given by
\[
\sigma(x,y)=\sum \phi(x_1)\phi(y_1)\phi^{-1}(x_2y_2)\in \kk
\]
for any $x,y\in H$. As a consequence, we obtain an algebra isomorphism
\[\!_\sigma\left(\kk[x_0,x_1,x_2,x_3]\right)\cong S(\alpha,\beta,\gamma).\]

\bibliography{biblNov2021}

\providecommand{\bysame}{\leavevmode\hbox to3em{\hrulefill}\thinspace}
\providecommand{\MR}{\relax\ifhmode\unskip\space\fi MR }
\providecommand{\MRhref}[2]{%
  \href{http://www.ams.org/mathscinet-getitem?mr=#1}{#2}
}
\providecommand{\href}[2]{#2}
\begin{thebibliography}{10}

\bibitem{AGV}
A.~L. Agore, A.~S. Gordienko, and J.~Vercruysse, \emph{{$V$}-universal {H}opf
  algebras (co)acting on {$\Omega$}-algebras}, Commun. Contemp. Math.
  \textbf{25} (2023), no.~1, Paper No. 2150095, 40. \MR{4523148}

\bibitem{ArtinSchelter1987}
M.~Artin and W.~F. Schelter, \emph{Graded algebras of global dimension {$3$}},
  Adv. in Math. \textbf{{\bf 66}} (1987), no.~2, 171--216. \MR{917738}

\bibitem{ATV1990}
M.~Artin, J.~Tate, and M.~Van~den Bergh, \emph{Some algebras associated to
  automorphisms of elliptic curves}, The {G}rothendieck {F}estschrift, {V}ol.
  {I}, Progr. Math., vol.~{\bf 86}, Birkh\"{a}user Boston, Boston, MA, 1990,
  pp.~33--85. \MR{1086882}

\bibitem{ATV1991}
\bysame, \emph{Modules over regular algebras of dimension {$3$}}, Invent. Math.
  \textbf{{\bf 106}} (1991), no.~2, 335--388. \MR{1128218}

\bibitem{Berger2001}
R.~Berger, \emph{Koszulity for nonquadratic algebras}, J. Algebra \textbf{{\bf
  239}} (2001), no.~2, 705--734. \MR{1832913}

\bibitem{BM06}
R.~Berger and N.~Marconnet, \emph{Koszul and {G}orenstein properties for
  homogeneous algebras}, Algebr. Represent. Theory \textbf{{\bf 9}} (2006),
  no.~1, 67--97. \MR{2233117}

\bibitem{Bichon2014}
J.~Bichon, \emph{Hopf-{G}alois objects and cogroupoids}, Rev. Un. Mat.
  Argentina \textbf{{\bf 55}} (2014), no.~2, 11--69. \MR{3285340}

\bibitem{BDV13}
J.~Bichon and M.~Dubois-Violette, \emph{The quantum group of a preregular
  multilinear form}, Lett. Math. Phys. \textbf{{\bf 103}} (2013), no.~4,
  455--468. \MR{3029329}

\bibitem{Bichon-Neshveyev-Yamashita2016}
J.~Bichon, S.~Neshveyev, and M.~Yamashita, \emph{Graded twisting of categories
  and quantum groups by group actions}, Ann. Inst. Fourier (Grenoble)
  \textbf{{\bf 66}} (2016), no.~6, 2299--2338. \MR{3580173}

\bibitem{Bichon-Neshveyev-Yamashita2018}
\bysame, \emph{Graded twisting of comodule algebras and module categories}, J.
  Noncommut. Geom. \textbf{{\bf 12}} (2018), no.~1, 331--368. \MR{3782061}

\bibitem{BCY3}
R.~Bocklandt, \emph{Graded {C}alabi--{Y}au algebras of dimension 3}, J. Pure
  Appl. Algebra \textbf{{\bf 212}} (2008), no.~1, 14--32. \MR{2355031}

\bibitem{BSW}
R.~Bocklandt, T.~Schedler, and M.~Wemyss, \emph{Superpotentials and higher
  order derivations}, J. Pure Appl. Algebra \textbf{{\bf 214}} (2010), no.~9,
  1501--1522. \MR{2593679}

\bibitem{BrownMacleod}
K.~A. Brown and M.~J. Macleod, \emph{The {C}ohen {M}acaulay property for
  noncommutative rings}, Algebr. Represent. Theory \textbf{{\bf 20}} (2017),
  no.~6, 1433--1465. \MR{3735914}

\bibitem{BZ08}
K.~A. Brown and J.~J. Zhang, \emph{Dualising complexes and twisted {H}ochschild
  (co)homology for {N}oetherian {H}opf algebras}, J. Algebra \textbf{{\bf 320}}
  (2008), no.~5, 1814--1850. \MR{2437632}

\bibitem{CG05}
S.~Caenepeel and T.~Gu{\'e}d{\'e}non, \emph{On the cohomology of relative
  {H}opf modules}, Comm. Algebra \textbf{{\bf 33}} (2005), no.~11, 4011--4034.

\bibitem{CMZ}
S.~Caenepeel, G.~Militaru, and S.~Zhu, \emph{Frobenius and separable functors
  for generalized module categories and nonlinear equations}, Lecture Notes in
  Mathematics, vol.~{\bf 1787}, Springer-Verlag, Berlin, 2002. \MR{1926102}

\bibitem{CKWZ}
K.~Chan, E.~Kirkman, C.~Walton, and J.~J. Zhang, \emph{Mc{K}ay correspondence
  for semisimple {H}opf actions on regular graded algebras, {I}}, J. Algebra
  \textbf{{\bf 508}} (2018), 512--538. \MR{3810305}

\bibitem{ChanKirkmanWaltonZhang2019}
\bysame, \emph{Mc{K}ay correspondence for semisimple {H}opf actions on regular
  graded algebras. {II}}, J. Noncommut. Geom. \textbf{{\bf 13}} (2019), no.~1,
  87--114. \MR{3941474}

\bibitem{CWZ2014}
K.~Chan, C.~Walton, and J.~J. Zhang, \emph{Hopf actions and {N}akayama
  automorphisms}, J. Algebra \textbf{{\bf 409}} (2014), 26--53. \MR{3198834}

\bibitem{CS1}
A.~Chirvasitu and S.~P. Smith, \emph{Exotic elliptic algebras of dimension 4},
  Adv. Math. \textbf{{\bf 309}} (2017), 558--623, With an appendix by Derek
  Tomlin. \MR{3607286}

\bibitem{CS2}
\bysame, \emph{Exotic elliptic algebras}, Trans. Amer. Math. Soc. \textbf{{\bf
  371}} (2019), no.~1, 279--333. \MR{3885145}

\bibitem{Chirvasitu-Walton-Wang2019}
A.~Chirvasitu, C.~Walton, and X.~Wang, \emph{On quantum groups associated to a
  pair of preregular forms}, J. Noncommut. Geom. \textbf{{\bf 13}} (2019),
  no.~1, 115--159. \MR{3941475}

\bibitem{CuadraEtingofWalton2016}
J.~Cuadra, P.~Etingof, and C.~Walton, \emph{Finite dimensional {H}opf actions
  on {W}eyl algebras}, Adv. Math. \textbf{{\bf 302}} (2016), 25--39.
  \MR{3545923}

\bibitem{Davies2017}
A.~Davies, \emph{Cocycle twists of algebras}, Comm. Algebra \textbf{{\bf 45}}
  (2017), no.~3, 1347--1363. \MR{3573384}

\bibitem{Doi93}
Y.~Doi, \emph{Braided bialgebras and quadratic bialgebras}, Comm. Algebra
  \textbf{{\bf 21}} (1993), no.~5, 1731--1749. \MR{1213985}

\bibitem{DT94}
Y.~Doi and M.~Takeuchi, \emph{Multiplication alteration by two-cocycles---the
  quantum version}, Comm. Algebra \textbf{{\bf 22}} (1994), no.~14, 5715--5732.
  \MR{1298746}

\bibitem{DongWu2009}
Z.-C. Dong and Q.-S. Wu, \emph{Non-commutative {C}astelnuovo-{M}umford
  regularity and {AS}-regular algebras}, J. Algebra \textbf{{\bf 322}} (2009),
  no.~1, 122--136. \MR{2526379}

\bibitem{Dubois-Violette2005}
M.~Dubois-Violette, \emph{Graded algebras and multilinear forms}, C. R. Math.
  Acad. Sci. Paris \textbf{{\bf 341}} (2005), no.~12, 719--724. \MR{2188865}

\bibitem{DVM}
\bysame, \emph{Multilinear forms and graded algebras}, J. Algebra \textbf{{\bf
  317}} (2007), no.~1, 198--225. \MR{2360146}

\bibitem{DVL1990}
M.~Dubois-Violette and G.~Launer, \emph{The quantum group of a nondegenerate
  bilinear form}, Phys. Lett. B \textbf{{\bf 245}} (1990), no.~2, 175--177.
  \MR{1068703}

\bibitem{PavelGelaki2001}
P.~Etingof and S.~Gelaki, \emph{On cotriangular {H}opf algebras}, Amer. J.
  Math. \textbf{123} (2001), no.~4, 699--713. \MR{1844575}

\bibitem{EGNO}
P.~Etingof, S.~Gelaki, D.~Nikshych, and V.~Ostrik, \emph{Tensor categories},
  Mathematical Surveys and Monographs, vol.~{\bf 205}, American Mathematical
  Society, Providence, RI, 2015. \MR{3242743}

\bibitem{EtingofWalton2016}
P.~Etingof and C.~Walton, \emph{Finite dimensional {H}opf actions on algebraic
  quantizations}, Algebra Number Theory \textbf{{\bf 10}} (2016), no.~10,
  2287--2310. \MR{3582020}

\bibitem{EtingofWalton2017}
\bysame, \emph{Finite dimensional {H}opf actions on deformation quantizations},
  Proc. Amer. Math. Soc. \textbf{{\bf 145}} (2017), no.~5, 1917--1925.
  \MR{3611308}

\bibitem{goodearl2015unipotent}
K.~R. Goodearl and M.~Yakimov, \emph{Unipotent and {N}akayama automorphisms of
  quantum nilpotent algebras}, Commutative Algebra and Noncommutative Algebraic
  Geometry \textbf{{\bf 2}} (2015), 181--212.

\bibitem{He-Lu2005}
J.-W. He and D.-M. Lu, \emph{Higher {K}oszul algebras and {$A$}-infinity
  algebras}, J. Algebra \textbf{{\bf 293}} (2005), no.~2, 335--362.
  \MR{2172343}

\bibitem{HNUVVW21}
H.~Huang, V.~C. Nguyen, C.~Ure, K.~B. Vashaw, P.~Veerapen, and X.~Wang,
  \emph{Twisting of graded quantum groups and solutions to the quantum
  {Y}ang-{B}axter equation}, Transform. Groups. (2022).

\bibitem{HNUVVW2}
\bysame, \emph{A cogroupoid associated to preregular forms}, arXiv:2112.09098,
  to appear in J. Noncommut. Geom. (2024).

\bibitem{HWWW}
H.~Huang, C.~Walton, E.~Wicks, and R.~Won, \emph{Universal quantum
  semigroupoids}, J. Pure Appl. Algebra \textbf{{\bf 227}} (2023), no.~2, Paper
  No. 107193, 34. \MR{4460365}

\bibitem{Jorgensen1999}
P.~J{\o}rgensen, \emph{Non-commutative {C}astenuovo--{M}umford regularity},
  Math. Proc. Cambridge Philos. Soc. \textbf{{\bf 125}} (1999), no.~2,
  203--221.

\bibitem{Jorgensen2004}
\bysame, \emph{Linear free resolutions over non-commutative algebras}, Compos.
  Math. \textbf{{\bf 140}} (2004), no.~4, 1053--1058. \MR{2059230}

\bibitem{KKZ}
E.~Kirkman, J.~Kuzmanovich, and J.~J. Zhang, \emph{Gorenstein subrings of
  invariants under {H}opf algebra actions}, J. Algebra \textbf{{\bf 322}}
  (2009), no.~10, 3640--3669.

\bibitem{Kirkman-Won-Zhang2021}
E.~Kirkman, R.~Won, and James J.~J. Zhang, \emph{Homological regularities and
  concavities}, arXiv:2107.07474 (2021).

\bibitem{KJ}
J.~Kock, \emph{Frobenius algebras and 2{D} topological quantum field theories},
  London Mathematical Society Student Texts, vol.~{\bf 59}, Cambridge
  University Press, Cambridge, 2004. \MR{2037238}

\bibitem{Le}
T.~Levasseur, \emph{Some properties of noncommutative regular graded rings},
  Glasgow Math. J. \textbf{\bf{34}} (1992), no.~3, 277--300. \MR{1181768}

\bibitem{liu2014twisted}
L.-Y. Liu, S.-Q. Wang, and Q.-S. Wu, \emph{Twisted {C}alabi--{Y}au property of
  {O}re extensions}, J. Noncommut. Geom. \textbf{{\bf 8}} (2014), no.~2,
  587--609.

\bibitem{LPWZ}
D.~M. Lu, J.~H. Palmieri, Q.~S. Wu, and J.~J. Zhang, \emph{Koszul equivalences
  in {$A_\infty$}-algebras}, New York J. Math. \textbf{{\bf 14}} (2008),
  325--378. \MR{2430869}

\bibitem{lu2017nakayama}
J.-F. L{\"u}, X.-F. Mao, and J.~J. Zhang, \emph{Nakayama automorphism and
  applications}, Trans. Amer. Math. Soc. \textbf{{\bf 369}} (2017), no.~4,
  2425--2460.

\bibitem{Manin2018}
Y.~I. Manin, \emph{Quantum groups and noncommutative geometry}, second ed., CRM
  Short Courses, Centre de Recherches Math\'{e}matiques, [Montreal], QC;
  Springer, Cham, 2018, With a contribution by Theo Raedschelders and Michel
  Van den Bergh. \MR{3839605}

\bibitem{M2005}
S.~Montgomery, \emph{Algebra properties invariant under twisting}, Hopf
  algebras in noncommutative geometry and physics, Lecture Notes in Pure and
  Appl. Math., vol.~{\bf 239}, Dekker, New York, 2005, pp.~229--243.
  \MR{2106932}

\bibitem{Mori-Smith2016}
I.~Mori and S.~P. Smith, \emph{{$m$}-{K}oszul {A}rtin--{S}chelter regular
  algebras}, J. Algebra \textbf{{\bf 446}} (2016), 373--399. \MR{3421098}

\bibitem{PolishchukPositselski2005}
A.~Polishchuk and L.~Positselski, \emph{Quadratic algebras}, University Lecture
  Series, vol.~{\bf 37}, American Mathematical Society, Providence, RI, 2005.
  \MR{2177131}

\bibitem{Radford2012}
D.~E. Radford, \emph{Hopf algebras}, Series on Knots and Everything, vol.~{\bf
  49}, World Scientific Publishing Co. Pte. Ltd., Hackensack, NJ, 2012.
  \MR{2894855}

\bibitem{vdb2017}
T.~Raedschelders and M.~Van~den Bergh, \emph{The {M}anin {H}opf algebra of a
  {K}oszul {A}rtin-{S}chelter regular algebra is quasi-hereditary}, Adv. Math.
  \textbf{{\bf 305}} (2017), 601--660. \MR{3570144}

\bibitem{RRZ1}
M.~Reyes, D.~Rogalski, and J.~J. Zhang, \emph{Skew {C}alabi--{Y}au algebras and
  homological identities}, Adv. Math. \textbf{{\bf 264}} (2014), 308--354.

\bibitem{RRZ2}
\bysame, \emph{Skew {C}alabi--{Y}au triangulated categories and {F}robenius
  {E}xt-algebras}, Trans. Amer. Math. Soc. \textbf{{\bf 369}} (2017), no.~1,
  309--340.

\bibitem{Rogalski2022}
D.~Rogalski, \emph{Artin-schelter regular algebras}, \textit{to appear in
  Contemporary Mathematics}.

\bibitem{Ro2016}
\bysame, \emph{Noncommutative projective geometry}, Noncommutative algebraic
  geometry, Math. Sci. Res. Inst. Publ., vol.~{\bf 64}, Cambridge Univ. Press,
  New York, 2016, pp.~13--70. \MR{3618472}

\bibitem{Romer}
T.~R\"{o}mer, \emph{On the regularity over positively graded algebras}, J.
  Algebra \textbf{{\bf 319}} (2008), no.~1, 1--15. \MR{2378058}

\bibitem{Sch1996}
P.~Schauenburg, \emph{Hopf bi-{G}alois extensions}, Comm. Algebra \textbf{{\bf
  24}} (1996), no.~12, 3797--3825. \MR{1408508}

\bibitem{Smith1994}
S.~P. Smith, \emph{Some finite-dimensional algebras related to elliptic
  curves}, Representation theory of algebras and related topics ({M}exico
  {C}ity, 1994), CMS Conf. Proc., vol.~{\bf 19}, Amer. Math. Soc., Providence,
  RI, 1996, pp.~315--348. \MR{1388568}

\bibitem{SSta92}
S.~P. Smith and J.~T. Stafford, \emph{Regularity of the four-dimensional
  {S}klyanin algebra}, Compositio Math. \textbf{{\bf 83}} (1992), no.~3,
  259--289. \MR{1175941}

\bibitem{Stefan99}
D.~{\c{S}}tefan and F.~Van~Oystaeyen, \emph{The {W}edderburn--{M}alcev theorem
  for comodule algebras}, Comm. Algebra \textbf{{\bf 27}} (1999), no.~8,
  3569--3581.

\bibitem{SteZha}
D.~Stephenson and J.~J. Zhang, \emph{Growth of graded {N}oetherian rings},
  Proc. of the Amer. Math. Soc. \textbf{{\bf 125}} (1997), no.~6, 1593--1605.

\bibitem{TV}
V.~Turaev and A.~Virelizier, \emph{Monoidal categories and topological field
  theory}, Progress in Mathematics, vol.~{\bf 322}, Birkh\"{a}user/Springer,
  Cham, 2017. \MR{3674995}

\bibitem{Ulbrich1987}
K.-H. Ulbrich, \emph{Galois extensions as functors of comodules}, Manuscripta
  Math. \textbf{{\bf 59}} (1987), no.~4, 391--397. \MR{915993}

\bibitem{Ul90}
\bysame, \emph{Smash products and comodules of linear maps}, Tsukuba J. Math.
  \textbf{{\bf 14}} (1990), no.~2, 371--378.

\bibitem{VdBSup}
M.~Van~den Bergh, \emph{Calabi--{Y}au algebras and superpotentials}, Selecta
  Math. (N.S.) \textbf{{\bf 21}} (2015), no.~2, 555--603. \MR{3338683}

\bibitem{WaltonWang2016}
C.~Walton and X.~Wang, \emph{On quantum groups associated to non-{N}oetherian
  regular algebras of dimension 2}, Math. Z. \textbf{{\bf 284}} (2016),
  no.~1-2, 543--574. \MR{3545505}

\bibitem{Witherspoon2019}
S.~J. Witherspoon, \emph{Hochschild cohomology for algebras}, Graduate Studies
  in Mathematics, vol.~{\bf 204}, American Mathematical Society, Providence,
  RI, 2019. \MR{3971234}

\bibitem{yekutieli2000rigid}
A.~Yekutieli, \emph{The rigid dualizing complex of a universal enveloping
  algebra}, J. Pure Appl. Algebra \textbf{{\bf 150}} (2000), no.~1, 85--93.

\bibitem{Yekutieli}
\bysame, \emph{Derived categories}, Cambridge Studies in Advanced Mathematics,
  vol. 183, Cambridge University Press, Cambridge, 2020. \MR{3971537}

\bibitem{yu2023calabi}
X.-L. Yu and X.-T. Wang, \emph{Calabi--{Y}au property of quantum groups of
  {${\rm GL}(2)$} representation type}, J. Algebra Appl. \textbf{{\bf 22}}
  (2023), no.~6, Paper No. 2450104. \MR{4590310}

\bibitem{Zhang1996}
J.~J. Zhang, \emph{Twisted graded algebras and equivalences of graded
  categories}, Proc. London Math. Soc. (3) \textbf{{\bf 72}} (1996), no.~2,
  281--311. \MR{1367080}

\bibitem{Zhang1998}
\bysame, \emph{Non-{N}oetherian regular rings of dimension {$2$}}, Proc. Amer.
  Math. Soc. \textbf{{\bf 126}} (1998), no.~6, 1645--1653. \MR{1459158}

\end{thebibliography}
\bibliographystyle{amsplain}
\end{document}